\definecolor{darkgreen}{rgb}{0,0.5,0}
\definecolor{darkblue}{rgb}{0,0,0.8}
\definecolor{darkred}{rgb}{0.8,0,0}
\definecolor{lightblue}{rgb}{0,0.6,0.8}
\DeclareFontFamily{U}{wncy}{}
\DeclareFontShape{U}{wncy}{m}{n}{<->wncyr10}{}
\DeclareSymbolFont{mcy}{U}{wncy}{m}{n}
\DeclareMathSymbol{\Sha}{\mathord}{mcy}{"58}
\theoremstyle{plain}
\newtheorem{theorem}{Theorem}[subsection]
\newtheorem{lemma}[theorem]{Lemma}
\newtheorem{corollary}[theorem]{Corollary}
\newtheorem{proposition}[theorem]{Proposition}
\newtheorem{conjecture}[theorem]{Conjecture}
\newtheorem{example}[theorem]{Example}
\newtheorem{remark}[theorem]{Remark}
\theoremstyle{definition}
\newtheorem{definition}[theorem]{Definition}
\newtheorem{assumption}[theorem]{Assumption}
\newtheorem*{assumption*}{Assumption}
\newtheorem*{claim*}{Claim}
\crefname{theorem}{Theorem}{Theorems}
\crefname{lemma}{Lemma}{Lemmata}
\crefname{corollary}{Corollary}{Corollaries}
\crefname{proposition}{Proposition}{Propositions}
\crefname{definition}{Definition}{Definitions}
\crefname{conjecture}{Conjecture}{Conjectures}
\crefname{question}{Question}{Questions}
\crefname{example}{Example}{Examples}
\crefname{algorithm}{Algorithm}{Algorithms}
\crefname{remark}{Remark}{Remarks}
\crefname{assumption}{Assumption}{Assumptions}
\def\ol#1{\overline{#1}}
\def\Alphabet{A,B,C,D,E,F,G,H,I,J,K,L,M,N,O,P,Q,R,S,T,U,V,W,X,Y,Z}
\def\alphabet{a,b,c,d,e,f,g,h,i,j,k,l,m,n,o,p,q,r,s,t,u,v,w,x,y,z}
\def\endpiece{xxx}
\def\makeAlphabet[#1]{\expandafter\makeA#1,xxx,}
\def\makealphabet[#1]{\expandafter\makea#1,xxx,}
\def\makeA#1,{\def\temp{#1}\ifx\temp\endpiece\else%
	\mkbb{#1}\mkfrak{#1}\mkbf{#1}\mkcal{#1}\mkscr{#1}\mkbs{#1}\expandafter\makeA\fi}%
\def\makea#1,{\def\temp{#1}\ifx\temp\endpiece\else\mkfrak{#1}\mkbf{#1}\mkbs{#1}\expandafter\makea\fi}%
\def\mkbb#1{\expandafter\def\csname bb#1\endcsname{\mathbb{#1}}}
\def\mkfrak#1{\expandafter\def\csname fr#1\endcsname{\mathfrak{#1}}}
\def\mkbf#1{\expandafter\def\csname b#1\endcsname{\mathbf{#1}}}
\def\mkcal#1{\expandafter\def\csname c#1\endcsname{\mathcal{#1}}}
\def\mkscr#1{\expandafter\def\csname s#1\endcsname{\mathscr{#1}}}
\def\mkbs#1{\expandafter\def\csname bs#1\endcsname{{\boldsymbol{#1}}}}
\def\makeop[#1]{\xmakeop#1,xxx,}
\def\mkop#1{\expandafter\def\csname #1\endcsname{{\operatorname{#1}}}} %
\def\xmakeop#1,{\def\temp{#1}\ifx\temp\endpiece\else\mkop{#1}\expandafter\xmakeop\fi}%
\def\makeup[#1]{\xmakeup#1,xxx,}
\def\mkup#1{\expandafter\def\csname #1\endcsname{{\mathrm{#1}\,}}} %
\def\xmakeup#1,{\def\temp{#1}\ifx\temp\endpiece\else\mkup{#1}\expandafter\xmakeup\fi}%
\newcommand{\F}{\mathbf{F}}
\newcommand{\Q}{\mathbf{Q}}
\newcommand{\KS}{\mathbf{KS}}
\newcommand{\fm}{\mathfrak{m}}
\newcommand{\fP}{\mathfrak{P}}
\newcommand{\fX}{\mathfrak{X}}
\newcommand{\fN}{\mathfrak{N}}
\newcommand{\eps}{\varepsilon}
\renewcommand{\epsilon}{\varepsilon}
\renewcommand{\theta}{\vartheta}
\renewcommand{\phi}{\varphi}
\newcommand{\mathup}[1]{\text{\textup{#1}}}
\renewcommand{\H} {\ensuremath{\mathup{H}}}
\newcommand{\Char}{\operatorname{char}}
\newcommand{\defeq}{\colonequals}
\newcommand{\isom}{\cong}
\renewcommand{\injlim}{\varinjlim}
\renewcommand{\projlim}{\varprojlim}
\numberwithin{equation}{section}
\begin{document}
	\title[$p$-converse theorems at potentially good ordinary Eisenstein primes]{$p$-converse theorems for elliptic curves\\ of potentially good ordinary reduction\\ at Eisenstein primes}
	
	\author{Timo Keller}
	\address{(T. Keller) Institut für Mathematik, Universität Würzburg, Emil-Fischer-Strasse 30, 97074,
		Würzburg, Germany}
	\address{Rijksuniversiteit Groningen, Bernoulli Institute, Bernoulliborg, Nijenborgh 9, 9747 AG Groningen, The Netherlands}
	
	\email{math@kellertimo.de}
	\urladdr{\url{https://www.timo-keller.de}}
	\thanks{TK was supported by the 2021 MSCA Postdoctoral Fellowship 01064790 -- Ex\-pli\-cit\-Rat\-Points while working on this article.}
	\author{Mulun Yin}
	\address{(M. Yin) University of California Santa Barbara, South Hall, Santa Barbara, CA 93106, USA}
	\email{mulun@ucsb.edu}
	
	\date{\today}
	
	\subjclass[2020]{11G40 (Primary) 11G05, 11G10, 14G10 (Secondary)}
	
	\begin{abstract}
		Let $E/\bQ$ be an elliptic curve and $p\geq 3$ be a prime. We prove the $p$-converse theorems for elliptic curves of potentially good ordinary reduction at Eisenstein primes (i.e., such that the residual representation $E[p]$ is reducible) when the $p$-Selmer rank is $0$ or $1$. The key step is to obtain the anticyclotomic Iwasawa Main Conjectures for an auxiliary imaginary quadratic field $K$ where $E$ does not have CM similar to those in~\cite{CGLS} and descent to $\Q$.
		
		As an application we get improved proportions for the number of elliptic curves in quadratic twist families having rank $0$ or $1$.
	\end{abstract}
	
	\maketitle
	
	\tableofcontents
	
	\section*{Introduction}
	
	\renewcommand{\thetheorem}{\Alph{theorem}}
	\subsection{Statement of the main results}
	
	Let $E/\bQ$ be an elliptic curve. The Birch--Swinnerton-Dyer Conjecture predicts that $\ord_{s=1}L(E,s)$, the order of vanishing of the $L$-function of $E$ at $s=1$, should equal $\rk_{\bZ}E(\bQ)$, the rank of its Mordell--Weil group. The celebrated theorem of Gross--Zagier and Kolyvagin states that for $r\in\{0,1\}$,\[\ord_{s=1}L(E,s)=r\Rightarrow \rk_{\bZ}E(\bQ)=r.\]
	In fact, they proved a stronger result. Consider the following conditions:\begin{enumerate}
		\item $\ord_{s=1}L(E,s)=r$;
		\item $\rk_{\bZ}E(\bQ)=r\text{ and }\#\Sha(E/\bQ)<\infty$.
	\end{enumerate}
	What they proved is that (i)$\Rightarrow$(ii) if $r\in\{0,1\}$, where $\Sha(E/\bQ)$ is the Tate--Shafarevich group of $E$, which is conjectured to be always finite.

	Over the past few years, much progress has been made towards a converse to the above theorem. If $p$ is any prime, one can define the $p^\infty$-Selmer groups associated to $E$, denoted by $\Sel_{p^\infty}(E/\bQ)$. Then there is a third condition:\begin{enumerate}
		\item[(iii)] $\corank_{\bZ_p}\Sel_{p^\infty}(E/\bQ)=r$. 
	\end{enumerate}
	(iii) is naturally a consequence of (ii) because the groups fit into an exact sequence\[
	0\to E(\bQ)\otimes_{\bZ}\bQ_p/\bZ_p\to \Sel_{p^\infty}(E/\bQ)\to \Sha(E/\bQ)[p^\infty]\to 0.
	\]
	The implication (iii)$\Rightarrow$(i) is usually called the $p$-converse to Gross--Zagier--Kolyvagin's theorem, which can be formulated in the following way.\begin{conjecture}
		Let $E$ be an elliptic curve defined over $\bQ$ and let $p$ be a prime. Let $r\in\bZ$. Then\[
		\corank_{\bZ_p}\Sel_{p^\infty}(E/\bQ)=r\Rightarrow \ord_{s=1}L(E,s)=r.
		\]
	\end{conjecture}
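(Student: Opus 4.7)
The plan is to adapt the CGLS strategy for good ordinary Eisenstein primes to the potentially good ordinary setting, using an auxiliary imaginary quadratic field $K$ as an intermediate object between anticyclotomic Iwasawa theory and the $L$-value statement over $\bQ$. First I would fix an imaginary quadratic field $K$ in which $p$ splits, satisfying a Heegner hypothesis with respect to the conductor of $E$, and chosen so that the quadratic twist $E^K$ has analytic rank $0$; the existence of such $K$ follows from nonvanishing results of Friedberg--Hoffstein type. The potentially good ordinary hypothesis on $E$ at $p$ ensures that, after possibly twisting by a finite-order character, $E$ becomes genuinely good ordinary over $K$, which makes available the BDP-style anticyclotomic $p$-adic $L$-function and Greenberg-type Selmer groups over the anticyclotomic $\bZ_p$-extension $K_\infty/K$.

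Second, and this is the technical heart of the argument, I would prove both divisibilities of the anticyclotomic Iwasawa Main Conjecture for $E/K$ at the Eisenstein prime $p$, following the template of~\cite{CGLS}. Because $E[p]$ is reducible as a $G_\bQ$-module, there is an exact sequence $0 \to \bF_p(\psi) \to E[p] \to \bF_p(\psi') \to 0$ of residual representations, and the strategy is to analyze the characteristic ideals of the Selmer groups and the BDP $p$-adic $L$-function character by character: one divisibility comes from the Heegner point Euler system, or its Iwasawa-theoretic refinement by Castella--Hsieh, applied to each constituent, while the reverse divisibility comes from Eisenstein congruences along a Hida family of CM forms associated with $\psi, \psi'$. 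The potentially good ordinary condition is handled either by passing to the twist of $E$ that is genuinely ordinary, or by working directly with a twisted version of the Hida and BDP constructions.

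Third, I would descend from $K$ back to $\bQ$. Using the decomposition $\Sel_{p^\infty}(E/K) \isom \Sel_{p^\infty}(E/\bQ) \oplus \Sel_{p^\infty}(E^K/\bQ)$ (valid for odd $p$) together with the choice of $K$ forcing $\corank \Sel_{p^\infty}(E^K/\bQ) = 0$, we get $\corank_{\bZ_p} \Sel_{p^\infty}(E/K) = r$. When $r=0$, the anticyclotomic IMC combined with the BDP interpolation formula produces $L(E/K, 1) \neq 0$, and coupled with $L(E^K, 1) \neq 0$ this yields $L(E, 1) \neq 0$. When $r=1$, the anticyclotomic IMC forces the Heegner point $P_K \in E(K)$ to be non-torsion, whereupon the Gross--Zagier formula gives $L'(E/K, 1) \neq 0$ and hence $\ord_{s=1} L(E, s) = 1$.

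The hardest step will be the Eisenstein side of the IMC in the potentially good ordinary case. The authors of~\cite{CGLS} treat only the genuinely good ordinary setting; extending their argument requires carefully tracking Galois equivariance, ramification, and local conditions at $p$ under the twist that restores ordinary reduction, and showing that the congruences with CM Hida families continue to produce the required divisibility after the twist. Secondary difficulties include choosing $K$ compatibly with both the Heegner hypothesis and the Eisenstein hypothesis on $E[p]$, and controlling the extra $p$-divisibilities coming from Tamagawa factors and local terms at primes of bad reduction, which are often nontrivial at Eisenstein primes and must match the analogous factors on the $p$-adic $L$-function side in order for the descent to produce the clean conclusion $\ord_{s=1} L(E,s) = r$.
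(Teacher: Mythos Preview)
The statement you are attempting to prove is recorded in the paper as a \emph{conjecture}, not a theorem; the paper does not claim a proof in the stated generality (arbitrary prime $p$, arbitrary $r\in\bZ$). What the paper actually proves is the special case $p>2$ of potentially good ordinary reduction, $E[p]$ reducible, and $r\in\{0,1\}$. Your proposal tacitly restricts to exactly this case, so it cannot establish the conjecture as stated: for $r\geq 2$, for supersingular or potentially multiplicative primes, and for irreducible $E[p]$, your outline says nothing.

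For the special case your proposal does address, the overall architecture matches the paper's: choose an auxiliary imaginary quadratic field $K$, prove the anticyclotomic Heegner Point Main Conjecture for the Heegner pair $(\tilde f,\chi_\epsilon)$ via one divisibility from a Kolyvagin system and the other from a comparison of Iwasawa invariants using Eisenstein congruences and Rubin's theorem for characters, then descend to $\bQ$ via a control theorem and Gross--Zagier. However, your descent step contains a genuine error. You say $K$ is chosen so that $E^K$ has analytic rank $0$, and then, for $r=0$, that the IMC plus BDP interpolation gives $L(E/K,1)\neq 0$. This is impossible: under the Heegner hypothesis the global root number of $E/K$ is $-1$, so $L(E/K,1)=0$ automatically. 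The correct choice, as in the paper's proof of the $p$-converse (\S\ref{pCgo} and \cref{pCmain}), is $\ord_{s=1}L(E^K/\bQ,s)=1-r$; Kolyvagin's theorem then gives $\corank_{\bZ_p}\Sel_{p^\infty}(E^K/\bQ)=1-r$, hence $\corank_{\bZ_p}\Sel_{p^\infty}(E/K)=1$ in \emph{both} cases $r=0,1$. The control theorem plus the IMC force the $\Lambda$-adic Heegner class to have nontorsion image in $S_p(E/K)$, whence by Gross--Zagier $\ord_{s=1}L(E/K,s)=1$, and the factorization $L(E/K,s)=L(E/\bQ,s)L(E^K/\bQ,s)$ yields $\ord_{s=1}L(E/\bQ,s)=r$. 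Your route for $r=0$, bypassing the Heegner point and appealing directly to $L(E/K,1)\neq 0$, cannot work in this anticyclotomic framework.
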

	
	When $r\in\{0,1\}$, many important cases of the $p$-converse theorems have been essentially obtained, most of which only allow $p$ to be a prime of ordinary reduction or good supersingular reduction for $E$. For example, when $E[p]$ is reducible as a $\Gal(\ol \bQ/\bQ)$-module (the \textit{Eisenstein} case) and does not have the trivial representation as a $\Gal(\ol\bQ_p/\bQ_p)$-subrepresentation, the good ordinary case is treated in~\cite{CGLS}. The restriction on $E[p]$ has later been removed and generalized by the authors in~\cite{KY24} to include the multiplicative case, while $E$ cannot have supersingular reduction in the Eisenstein case.
	
	Our first result is the anticyclotomic Iwasawa Main Conjectures for elliptic curves at potentially good ordinary Eisenstein primes $p\geq 3$. In fact, we formulate and prove things in terms of a bit more general `Heegner pairs' See~\cref{IMCpo}.
	
	Our main results are the following:
	\begin{theorem}
		Let $E$ be an elliptic curve defined over $\bQ$ and let $p>2$ be a prime of potentially good ordinary reduction for $E$. Assume $E[p]$ is reducible. Let $r\in\{0,1\}$. Then\[
		\corank_{\bZ_p}\Sel_{p^\infty}(E/\bQ)=r\Rightarrow \ord_{s=1}L(E,s)=r,
		\]
		and so $\rk_{\bZ}E(\bQ)=r$ and $\#\Sha(E/\bQ)<\infty$.
	\end{theorem}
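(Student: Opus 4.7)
The plan is to reduce the theorem over $\bQ$ to an anticyclotomic problem over a well-chosen auxiliary imaginary quadratic field $K$ in which $E$ does not have CM, and then apply the potentially good ordinary anticyclotomic Iwasawa Main Conjecture established earlier in the paper (Theorem \ref{IMCpo}). First I would choose $K$ so that: $p$ splits in $K$, $K$ satisfies a suitable Heegner-type hypothesis relative to the conductor of $E$, the residual representation $E[p]$ remains generically non-split after restriction to $G_K$, and the quadratic twist $E^{K}/\bQ$ has analytic rank $0$. Existence of such $K$ is standard via Waldspurger/Bump--Friedberg--Hoffstein non-vanishing for the twist and a Chebotarev argument to control the behaviour of $E[p]$ and the reduction type at $p$. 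With this choice, $\ord_{s=1}L(E^K,s)=0$, and by the already-known Eisenstein $p$-converse results for analytic rank zero (applied to $E^K$, via the references \cite{CGLS,KY24} in the potentially good ordinary case, together with the twist) we obtain $\Sel_{p^\infty}(E^K/\bQ)=0$. Combined with the hypothesis $\corank_{\bZ_p}\Sel_{p^\infty}(E/\bQ)=r$ and the usual decomposition under the action of $\Gal(K/\bQ)$, this yields $\corank_{\bZ_p}\Sel_{p^\infty}(E/K)=r$.

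Next, I would transfer this rank-$r$ information into the anticyclotomic tower $K_\infty/K$. Let $\Lambda=\bZ_p\llbracket\Gal(K_\infty/K)\rrbracket$. By a control theorem for the Greenberg/BDP Selmer groups attached to $E$ (which in the potentially good ordinary setting is obtained after twisting by an appropriate character and base changing to the field where $E$ acquires good ordinary reduction, then descending), the $\Lambda$-corank of the anticyclotomic Selmer group equals $r$. Now I invoke the IMC proven in \cref{IMCpo} for the Heegner pair attached to $(E,K)$: this identifies the characteristic ideal of a Selmer-type module with either the BDP $p$-adic $L$-function (in the $r=0$ case) or the image of a $\Lambda$-adic Heegner class under an explicit reciprocity law (in the $r=1$ case).

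For $r=0$, the IMC forces the BDP $p$-adic $L$-function to be a $p$-adic unit up to the anticyclotomic specialisation; by the interpolation property of the BDP $L$-function, its value at the trivial character computes (a non-zero $p$-adic multiple of) $L(E/K,1)$, so $L(E/K,1)\ne 0$. Since $L(E/K,s)=L(E/\bQ,s)L(E^K/\bQ,s)$ and $L(E^K,1)\ne 0$ by construction, we conclude $L(E/\bQ,1)\ne 0$, i.e.\ $\ord_{s=1}L(E,s)=0$. For $r=1$, the IMC implies that the $\Lambda$-adic Heegner class is not $\Lambda$-torsion, hence its bottom layer, the Heegner point $y_K\in E(K)\otimes\bZ_p$, is non-torsion. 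By the Gross--Zagier formula this gives $\ord_{s=1}L(E/K,s)=1$, and combining with $L(E^K,1)\ne 0$ yields $\ord_{s=1}L(E/\bQ,s)=1$. Finiteness of $\Sha(E/\bQ)[p^\infty]$ in both cases follows from the $p^\infty$-Selmer corank being $r$ together with the Kolyvagin-type divisibility coming from the Heegner class (for $r=1$) or the trivial exact sequence (for $r=0$); finiteness of the prime-to-$p$ part of $\Sha$ is the classical Gross--Zagier--Kolyvagin theorem.

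The main obstacle I anticipate is the control/descent from the anticyclotomic tower down to $\bQ$ in the \emph{potentially} good ordinary regime: $E$ itself is not ordinary at $p$ over $\bQ$, so the usual Greenberg Selmer conditions have to be replaced by the ones attached to the Galois stable lattice seen after a tame base change to the field of good ordinary reduction, and one must check that the surjectivity/finite-kernel assertions in the control theorem remain valid under these twisted local conditions, with error terms controlled by the Tamagawa factors and by local cohomology at the primes of additive reduction. Verifying this compatibility, and matching it against the formulation of \cref{IMCpo}, is where the bulk of the technical work will sit; the rest is a fairly standard repackaging of the CGLS strategy.
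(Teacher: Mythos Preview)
Your $r=1$ argument is essentially the paper's, but your $r=0$ case has a genuine gap. Under the Heegner hypothesis (all primes dividing $N$ split in $K$), the global root number of $E/K$ is $-1$, so $w(E)w(E^{K})=-1$. If $\corank_{\bZ_p}\Sel_{p^\infty}(E/\bQ)=0$, the $p$-parity theorem gives $w(E)=+1$, hence $w(E^{K})=-1$ and $L(E^{K},1)=0$: you \emph{cannot} choose $K$ satisfying the Heegner hypothesis with $E^{K}$ of analytic rank~$0$. Relatedly, your appeal to the interpolation property of the BDP $p$-adic $L$-function at the trivial character is incorrect: the trivial character lies \emph{outside} the BDP interpolation range (the central $L$-value there vanishes by sign), and the value of $\cL_p^{\mathrm{BDP}}$ at the trivial character is given by a $p$-adic Gross--Zagier formula in terms of the $p$-adic logarithm of the Heegner point, not by $L(E/K,1)$. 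Also, what you need for $E^{K}$ is the forward implication (Gross--Zagier--Kolyvagin), not a $p$-converse.

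The paper's remedy is to choose $K$ so that $\ord_{s=1}L(E^{K},s)=1-r$, which is compatible with the sign constraint for both $r=0,1$. Then Kolyvagin gives $\corank_{\bZ_p}\Sel_{p^\infty}(E^{K}/\bQ)=1-r$, hence $\corank_{\bZ_p}\Sel_{p^\infty}(E/K)=1$ in \emph{both} cases. The control theorem (\cref{controlpo}) and the structure $\cX\sim\Lambda\oplus M\oplus M$ then force $M_{\Gamma}$ finite, so $T\nmid f_{\Lambda}(M)$; by the Heegner Point Main Conjecture divisibility (\cref{IMCpo}) this gives $T\nmid\Char_{\Lambda}(\H^1_{\cF_{\Lambda,\epsilon}}(K,\bT_\epsilon)/\Lambda\kappa_\infty)$, whence $\kappa_\infty$ is non-torsion and its image in $S_p(E/K)$ is a non-torsion multiple of the Heegner point. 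Gross--Zagier yields $\ord_{s=1}L(E/K,s)=1$, and subtracting $\ord_{s=1}L(E^{K},s)=1-r$ gives $\ord_{s=1}L(E,s)=r$. Your anticipation about the control theorem in the potentially ordinary regime is well-placed; this is exactly the content of \cref{controlpo}.
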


	\subsection{Method of proof and outline of the paper}\label{method}
	Let $E/\Q$ be an elliptic curve of analytic rank $0$ or $1$ and $K/\Q$ be a \emph{Heegner field} for $E/\Q$, i.e., such that all $p \mid N_E$ split in $K$ and $r_\an(E/K) = 1$.
	A $p$-converse theorem for an elliptic curve $E$ is naturally a consequence of a `Heegner Point type' Main Conjecture of Iwasawa theory. More precisely, let $\Sel_{p^\infty}(E/K)$ and $S_p(E/K)$ be the Selmer groups fitting into the exact sequences
	\[
	0\to E(K)\otimes \Q_p/\bZ_p \to \Sel_{p^\infty}(E/K)\to \Sha_{p^\infty} \to 0,
	\]
	\[
	0\to E(K)\otimes \bZ_p \to S_p(E/K) \to \projlim_n \Sha_{p^n} \to 0.
	\]
	Then it is predicted that $\Sel_{p^\infty}(E/K)\cong \Q_p/\bZ_p\oplus M\oplus M$ for some finite $\bZ_p$-module $M$ such that
	\[
	\length_{\bZ_p}(M)=\length_{\bZ_p}(S_p(E/K)/\bZ_p\cdot \kappa_1)-v_p(\Tam(E/K)),
	\]
	where $\kappa_1\in S_p(E/K)$ is an element of a `Kolyvagin system' coming from Heegner points and $\Tam(E/K)$ is the product of Tamagawa numbers for $E$ over $K$. The Kolyvagin system argument in~\cite{How2004} gives a partial answer to the above conjecture in many cases. Namely, one obtains the desired structure theorem, while being able to show
	\begin{equation}\label{1div}\length_{\bZ_p}(M)\leq\length_{\bZ_p}(S_p(E/K)/\bZ_p\cdot \kappa_1).\end{equation}
	Unfortunately, to obtain a $p$-converse theorem, one would ask for an opposite inequality, which often requires new ideas in different cases.
	
	One could also consider a more general `$\Lambda$-version' (in fact, it is necessary to do so) of the above conjecture for $\Lambda\coloneq \bZ_p\llbracket T\rrbracket$, the anticyclotomic Iwasawa algebra. Namely, if $K_\infty/K$ is the anticyclotomic $\bZ_p$-extension of $K$ with subfields $\{K_n\subset K_\infty\}_{n\in\bN}$ with $[K_n:K]=p^n$, one could consider the `limiting Selmer groups' $\Sel_{p^\infty}(E/K_\infty)\coloneq\injlim\Sel_{p^\infty}(E/K_n)$ and $S_p(E/K_\infty)\coloneq\projlim S_p(E/K_n)$. They will be closely related to certain (dual) Selmer groups denoted by $X\coloneq \Hom(\H^1_{\cF_\Lambda}(K,M_f),\bQ_p/\bZ_p)$ and $\H^1_{\cF_\Lambda}(K,\bT)$ respectively, and one expects that $X$ should be pseudo-isomorphic to $\Lambda\oplus M\oplus M$ where $M$ is a torsion $\Lambda$-module. Moreover, it is conjectured that \[\Char_\Lambda(M)=\Char_\Lambda(\H^1_{\cF_\Lambda}(K,\bT)/\Lambda\cdot\kappa_1^{\Heeg}),\]where $\Char_\Lambda$ denotes the characteristic $\Lambda$-ideal and $\kappa_1^{\Heeg}$ is again an element of a Kolyvagin system coming from Heegner points. Then in the good ordinary case, the Kolyvagin system argument in~\cite{How2004} also proves the structure theorem and shows that $\Char_\Lambda(M)\supset \Char_\Lambda(\H^1_{\cF_\Lambda}(K,\bT)/\Lambda\cdot\kappa_1^{\Heeg})$, a `one-side divisibility' of the Heegner Point Main Conjecture. It was first observed in~\cite{CGLS} that in the Eisenstein case, one could obtain the reversed divisibility by passing to an equivalent form of the Main Conjecture and comparing the so-called `Iwasawa invariants'.
	
	More precisely, in the good ordinary case, there are three steps in establishing the equalities in the Heegner Point Main Conjectures:
	\begin{enumerate}[I]
		\item Proving one divisibility using a Kolyvagin system argument;
		\item Constructing a so-called \textit{Perrin-Riou's regulator map} or \textit{the big logarithm map} that maps a Heegner class to the Bertolini--Darmon--Prasana $p$-adic $L$-function, establishing the equivalence between the Heegner Point Main Conjecture to the \textit{Greenberg's Main Conjecture} involving Greenberg's Selmer groups and $p$-adic $L$-functions. This relation is sometimes referred to as \textit{the explicit reciprocity law};
		\item In the setting of Greenberg's Main Conjecture, comparing the Iwasawa invariants of the algebraic side (for Selmer groups) and the analytic side (for $p$-adic $L$-functions). An equality of the invariants will turn one divisibility coming from the corresponding one in the Heegner Point Main Conjecture into an equality. This in turn implies the equality in Heegner Point Main Conjecture.
	\end{enumerate}
	
	Step III above heavily relies on the Eisenstein assumption. By the reducibility of $E[p]$, there is a short exact sequence \begin{equation}\label{Eis}
	0\to \F_p(\phi)\to E[p]\to \F_p(\psi)\to 0
	\end{equation}
	for some characters $\phi$ and $\psi$. One can relate the Iwasawa invariants of $f$ to those of $\phi$ and $\psi$ on both the algebraic side (in terms of Selmer groups) and the analytic side (in term of $p$-adic $L$-functions). Now the Iwasawa Main Conjectures (\cite{Rubin1991}\&\cite{CW78}. Also~\cite{deShalit}) for the characters bridge the algebraic side and the analytic side. The Heegner Point Main Conjecture, together with Mazur's control theorem, typically yields the $p$-converse theorem.
	
	In the multiplicative reduction case, one proves the Greenberg's Main Conjecture using congruence in a Hida family (\cite{Skinner}). If $f$ is a weight $2$ newform with $p$ dividing its level, there is a Hida family $\{f_k\}$ of higher weight newforms of good reduction `lying above' $f$. By a limiting process, the Greenberg's Main Conjectures for $f_k$'s imply the Greenberg's Main Conjecture for $f$. One then shows the equivalence between the Greenberg's Main Conjecture and the Heegner Point Main Conjecture by studying the exceptional behavior of the big logarithm map for $f$ (\cite{Cas15},~\cite{KY24}). 
	
	In this paper, we extend the Kolyvagin system argument to some new cases in the additive reduction setting, and show that, with carefully defined Selmer groups, one could also formulate and prove Heegner Point Main Conjectures as well as control theorems that would suffice to give a $p$-converse theorem. In doing so, one needs to distinguish between three cases (see~\cref{additive} for a quick review of types of additive reduction):
	\begin{enumerate}[i]
		\item the \textit{potentially good ordinary case};
		\item the \textit{potentially multiplicative case};
		\item the \textit{potentially supersingular case}.
	\end{enumerate}
	The first two cases are both considered as \textit{potentially ordinary} and can be studied together. In fact, if $f$ is a newform of weight $2$ associated to an elliptic curve of potentially ordinary reduction at $p$, then it is a twist of an $p$-ordinary newform with nebentypus, i.e., $f=\tilde{f}\otimes\epsilon$ for some $p$-ordinary $\tilde{f}$ and a finite order character $\epsilon$, and we will work with a \textit{Heegner pair} $(\tilde{f},\chi_\epsilon)$ that is equivalent to $f$.   
	
	In the potentially good ordinary case, we modify the arguments in~\cite{CGLS}, or rather, a generalization in~\cite{KY24}, to obtain an Iwasawa Main Conjecture over an auxiliary imaginary quadratic field $K$ that is not the CM field of $E$ (if $E$ does not have complex multiplication, we could choose any $K$ satisfying certain hypotheses). Roughly speaking, if $\tilde{f}$ has good ordinary reduction, then one simply considers everything twisted by $\epsilon$, and the arguments are not very different from the good ordinary case without twists.
	
	In the potentially multiplicative case, one can hope to work with a twisted Hida family $\{\tilde{f}_k\otimes\epsilon_k\}$ lying above $f$, where $\{\tilde{f}_k\}$ is a Hida family lying above $\tilde{f}$. Then $\tilde{f}_k\otimes\epsilon_k$ are thus automatically twists of good ordinary forms, which makes reasoning analogous to the ordinary case possible. However, due to the lack of tools of studying Perrin-Riou's regulator map, we will not consider the potentially multiplicative case in this work. Nonetheless, many of our results allow $p$ in the level of the ordinary modular forms.
	
	A key step in our arguments is to find a suitable finite extension of $K$ where $E$ obtains good reduction, and show that climbing up the fields only introduce controllable errors.
	
	The paper is organized as follows:
	
	In~\cref{additive}, we review the reduction types of an elliptic curve, with a focus on different cases of additive reduction. We also discuss reduction types of modular forms for our application.
	
	In~\cref{goodIwa}, we introduce Iwasawa theory, which will be the main tool to prove the main results. We also include a structural but comprehensive proof of the $p$-converse theorem in the good ordinary case.
	
	In~\cref{potordIwasa}, we study Iwasawa theory in the potentially good ordinary cases, where we formulate and prove the Heegner Point Main Conjectures, yielding the proofs of the $p$-converse theorems.
	
	\subsection{Relation to previous works}
	On one hand, $p$-converse theorems are consequences of Iwasawa Main Conjectures. When the analytic, hence algebraic rank of $E/\Q$ is $0$ or $1$, the Iwasawa Main Conjectures have been studied by several authors in good ordinary case (\cite{CGLS} for (residually) reducible,~\cite{SU14} for irreducible) and bad multiplicative case (\cite{KY24} for reducible, \cite{Skinner} for irreducible), whether or not a $p$-converse theorem is explicit. 
	
	In this work, we prove the $p$-converse theorems for elliptic curves of potentially good ordinary reduction at additive odd primes in the residually reducible case.
	
	On the other hand, $p$-converse theorems see interesting applications in arithmetic statistics, allowing the distribution of certain \textit{Selmer ranks} to control the ranks of the Mordell--Weil groups. 
	
	\subsection{Eisenstein primes}
	Let $f$ be the weight $2$ newform associated to $E$. In the residually reducible setting, one could take the advantage of the congruence of $f$ to an Eisenstein series $G$ (by the results of~\cite{Kri16}), and obtain a congruence of the $p$-adic $L$-functions of $f$ (constructed in~\cite{BDP13}) to that of $G$. These two $p$-adic $L$-functions $\cL_f$, $\cL_G$ are carefully studied in~\cite{CGLS}. The fact that we could compare $\cL_f$ to $\cL_G$ using congruence without the need to compare interpolation properties is what allows us to finish the argument in the bad reduction case. In the non-Eisenstein case, however, one seems unable to argue without comparing interpolation properties directly, which make the situations much more mysterious.
	
	\subsection{Potentially multiplicative reduction}
	Recall that our proof of the Heegner Point Main Conjectures is divided into three steps: \begin{itemize}
		\item I. Proving one-side divisibility;
		\item II. Establishing the explicit reciprocity law;
		\item III. Comparing Iwasawa invariants
	\end{itemize}
	Steps I and III can be easily extended to the potentially multiplicative case. However, in proving step II we need to use the results from~\cite{JLZ21}, which do no work any more. Indeed, they need to assume $p$ is split in the chosen imaginary quadratic field $K$, while if $f=\tilde{f}\otimes\eps$ is a potentially multiplicative form with $\eps_f=\mathbf{1}=\eps_{\tilde{f}}$, $\epsilon$ must be a quadratic character such that $p\mid\cond(\epsilon)$. Such a character exactly corresponds to imaginary quadratic fields where $p$ does not split.
	
	It is mentioned in~\cite{JLZ21} that a analog of their results which allows non-split $p$ might be achievable by possible extensions of the work in~\cite{AI19}. If so, $p$-converse theorems for potentially multiplicative reduction should also be within reach, which we hope to examine in future work. For future use, some of our results also cover potentially multiplicative case.

	\subsection{Higher weight modular forms}
	The goal of this paper is to prove the $p$-converse theorems for elliptic curves at primes of potentially good ordinary reduction. However, in the potential use of Hida theory to treat the potentially multiplicative case in weight $2$, it is natural to study twists of good ordinary forms of higher weights (corresponding to `Heegner pairs'). It should be mentioned that the `twists' will in general no longer be classical modular forms since these characters will have infinite order. But we can think of these twists as self-dual twist on the level of Galois representations. Some of our arguments still work for general Heegner pairs.
	
	On the other hand, a modular form of weight greater than $2$ of multiplicative reduction is no longer ordinary. Therefore we will not consider those situations.

	\subsection{Applications.} 
	We would get better proportions of elliptic curves in certain families that satisfy the BSD rank conjecture and provide new evidence towards Goldfeld's Conjecture. See~\cref{Goldfeld}.
	
	\subsection{Conventions.}
	For any intermediate field $L$ contained in the algebraic closure $\bar{\Q}$ of $\Q$, we write $G_L$ for its absolute Galois group $\Gal(\bar{\bQ}/L)$. When we say $A$ and $B$ are pseudo-isomorphic, we mean there is a pseudo-isomorphism from $A$ to $B$. Sometimes we need to work with $\bZ_p$-extensions of $K$ and $L$. For $K_\infty$ the anticyclotomic extension of $K$, we set $L_\infty\defeq LK_\infty$. 
	
	\subsection{Acknowledgements} 
	We thank MY's advisor Francesc Castella for his guidance throughout this project. This work is part of MY's forthcoming Ph.D.\ thesis. 
	
	\renewcommand{\thetheorem}{\arabic{section}.\arabic{subsection}.\arabic{theorem}}
	
	\section{Primes of additive reduction}\label{additive}
	In this section we review some fundamental results on the reduction types of elliptic curves and modular forms at a prime $p$. 
	
	\subsection{Reduction types of elliptic curves}
	Let $E$ be an elliptic curve defined over $\bQ$ and $\tilde{E}$ be its reduction modulo $p$. If $\tilde{E}$ is non-singular, we say that $E$ has good reduction at $p$, otherwise $E$ has bad reduction at $p$. Suppose first that $E$ has good reduction at $p$. Let $a_p=p+1-\#\tilde{E}(\F_p)$. We say that $E$ has \textit{(good) ordinary} reduction at $p$ if $p\nmid a_p$, and $E$ is said to have \textit{(good) supersingular} reduction at $p$ otherwise. One knows that if $E$ has supersingular reduction at $p$, then $E[p]$ must be an irreducible $G_\bQ$-module.
	
	Now suppose $E$ has bad reduction. Then $\tilde{E}$ would have a unique singular point $P$. If $P$ is nodal, we say $E$ has \textit{(bad) multiplicative} reduction at $p$. If $p$ is cuspidal, we say $E$ has \textit{(bad) additive} reduction at $p$. In the multiplicative case, we have $a_p=\pm 1$, so multiplicative reduction is also considered ordinary.
	
	The $L$-function of $E$ is formally defined as an Euler product of local factors at every prime:\[
	L(E,s)\coloneq \prod_{p\ \text{good}}(1-a_pp^{-s}+p^{1-2s})^{-1}\cdot \prod_{p\ \text{bad}}(1-a_p p^{-s})^{-1}
	\]
	where $a_p \in\{0,\pm1\}$ depending on the reduction type modulo bad primes $p \mid N$. 
	
	We also defined the conductor $N_{E/\bQ}$ of $E/\bQ$ to be \[N_{E/\bQ}=\prod_p p^{f_p},\]
	where \[f_p=\begin{cases} 
		0 & \text{$E$ has good reduction at $p$} \\
		1 & \text{$E$ has multiplicative reduction at $p$} \\
		2 & \text{$E$ has additive reduction at $p>3$}\\
		2+\delta_p & \text{$E$ has additive reduction at $p=2,3$},
	\end{cases}
	\]where $\delta_p\geq 0$ is a technical constant.
	
	\subsection{Reduction types of modular forms}\label{formtype}
	By modularity theorem, every elliptic curve is associated with a weight $2$ newform $f(z)=\sum_n a_n q^n\in S_2(\Gamma_0(N_f))^{\new}$ where $q=e^{2\pi iz}$ in the sense that their $L$ functions agree:\[
	L(E,s)=L(f,s)\coloneq \sum_n a_n n^{-s}.
	\]
	The conductor $N_E$ of the elliptic curve agrees with the level $N_f$ of the newform. We could then decompose $N_f$ into a product $N_{\mult}N_{\add}$ such that $\ell\mid N_{\mult}$ implies $\ell$ is of multiplicative reduction and $\ell\mid N_{\add}$ implies $\ell$ is of additive reduction. In general, if we have a modular form $f\in S_{k}(\Gamma_0(N))^{\new}$, we say $p$ is a good prime for $f$ if $p\nmid N$, that $p$ is a multiplicative prime for $f$ if $p\| N$ and that $p$ is an additive prime for $f$ if $p^2\mid N$.
	
	Following~\cite{Kato}, we can also talk about potential good reduction for a modular form.
	\begin{definition}[Remark 12.7 in~\cite{Kato}]
		There exists a finite extension $K$ of $\bQ_p$ having the following properties. For any finite place $v$ of $F$ which does not lie over $p$, the representation of $\Gal(\ol K/K)$ on $V_{F_v}(f)$ is unramified. For any finite place $v$ of $F$ which lies over $p$, the representation of $\Gal(\ol K/K)$ on $V_{F_v}(f)$ is crystalline.
	\end{definition}
	
	\subsection{Primes of additive reduction}
	When $E/\bQ$ has additive reduction at a prime $p$ and is viewed as an elliptic curve defined over $\bQ_p$, one could enlarge the field so that $E$ can gain good reduction or multiplicative reduction over some finite extension of $\bQ_p$. For this reason, additive reduction as sometimes referred to as \textit{unstable} reduction. On the other hand, good reduction and multiplicative reduction do not change when extending the ground field, so they are both called \textit{semistable} reduction. 
	
	When $E$ has complex multiplication, it has potentially good reduction everywhere.

	\subsection{Field of semistable reduction}\label{goodfield}
	Most of the time, we do not need to make a specific choice of an extension $L_u/\bQ_p$ where $E$ gains semistable reduction. However, it is convenient to record what can be said about such extensions. We have the following result from~\cite{Conrad}.
	
	\begin{proposition}[Proposition 6.5 in~\cite{Conrad}]
		Pick $N\geq 3$ not divisible by $p$. Then $E$ acquires semistable reduction at all places of the finite Galois splitting field $\bQ(E[N])/\bQ)$ of $A[N]$.
	\end{proposition}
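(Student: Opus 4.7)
First, I would reduce to a local claim: for each place $v$ of $L\defn\bQ(E[N])$ with residue characteristic $q$, I need $E/L_v$ to have semistable reduction. The main tool is the Serre--Tate criterion (combined with Grothendieck's monodromy theorem), which says that $E/L_v$ is semistable if and only if the inertia subgroup $I_v\subset\Gal(\ol{L_v}/L_v)$ acts unipotently on $T_\ell E$ for some (equivalently, any) prime $\ell\neq q$.

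The plan is to exhibit such an $\ell$ from the prime factorization of $N$: pick $\ell\mid N$ with $\ell\neq q$ such that the exact power $\ell^a\|N$ satisfies $\ell^a\geq 3$. Because $N\geq 3$ and $p\nmid N$, such a prime exists whenever $q\nmid N$, and in particular at all places $v\mid p$. The hypothesis $E[N]\subset E(L_v)$ then forces $I_v$ to act trivially on $E[\ell^a]$, so its image in $\Aut(T_\ell E)\isom GL_2(\bZ_\ell)$ lies in the principal congruence subgroup
\[
K_a\defn\ker\bigl(GL_2(\bZ_\ell)\to GL_2(\bZ/\ell^a\bZ)\bigr),
\]
which is torsion-free pro-$\ell$ precisely because $\ell^a\geq 3$.

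Next, I would invoke Grothendieck's monodromy theorem: the $I_v$-representation on $V_\ell E$ is quasi-unipotent, so there is an open subgroup $I'\leq I_v$ acting unipotently. Given $h\in I_v$, some power $h^n$ lies in $I'$ and is thus unipotent, so the eigenvalues of $h$ in $\ol\bQ_\ell$ are $n$-th roots of unity. Since $h\equiv 1\pmod{\ell^a}$ and no nontrivial root of unity in $\ol\bQ_\ell$ can be congruent to $1$ modulo $\ell^a$ when $\ell^a\geq 3$, all eigenvalues of $h$ are $1$, so $h$ is unipotent. The Serre--Tate criterion then yields semistable reduction at $v$.

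The hard part will be the edge case where $q\mid N$ (so $q\neq p$) and $N$ is a pure $q$-power, in which case no suitable $\ell$ can be chosen among the divisors of $N$. I would handle this either by enlarging $N$ to have a prime factor distinct from $q$ (which only enlarges $L$, harmlessly) or by noting that at such $v$ the reduction type of $E/L_v$ can already be read off from the reduction type of $E/\bQ_q$. In the intended application within the paper one only needs semistable reduction at $v\mid p$, where $p\nmid N$ ensures the argument above applies without modification.
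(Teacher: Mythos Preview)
The paper does not give a proof of this proposition; it is simply quoted from Conrad as a black box (and then immediately used in the next lemma). So there is no ``paper's own proof'' to compare against.

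Your argument is the standard one (essentially Raynaud's criterion via the Serre--Tate/N\'eron--Ogg--Shafarevich criterion combined with Grothendieck's quasi-unipotence theorem), and the core of it is correct. In particular, your key step---that no nontrivial root of unity $\zeta\in\ol\bQ_\ell$ satisfies $\zeta\equiv 1\pmod{\ell^a\ol\bZ_\ell}$ once $\ell^a\geq 3$---is exactly right: if $h=1+\ell^aM$ with $M\in M_2(\bZ_\ell)$, then the eigenvalues of $h$ are $1+\ell^a\mu$ with $\mu\in\ol\bZ_\ell$, and a short valuation computation on $\zeta-1$ for $\zeta\in\mu_{\ell^\infty}$ rules out $\zeta\ne 1$ unless $\ell^a\leq 2$.

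Two small caveats on your edge-case discussion. First, ``$N$ a pure $q$-power'' is not the only bad case: $N=2q^b$ with $q$ odd is equally problematic, since then the only available $\ell\neq q$ gives $\ell^a=2<3$. Second, your proposed fix of ``enlarging $N$'' does \emph{not} prove the stated proposition: it enlarges $L=\bQ(E[N])$, and semistability over a larger field does not descend. (Your alternative remark, that for the paper's purposes one only needs semistability at $v\mid p$, where $p\nmid N$ guarantees the argument goes through, is the honest way out and is exactly how the result is used downstream.)
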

	
	\begin{lemma}
		There is a prime $q \geq 3$ of good reduction for $E$ such that $E$ acquires good reduction over $\Q_p(E[q])/\Q_p$ and such that this field extension has degree coprime to $p$.
	\end{lemma}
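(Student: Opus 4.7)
The plan is to combine the preceding Proposition with a Dirichlet-type density argument on $q$.

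By the preceding Proposition, for any prime $q \geq 3$ with $q \neq p$, the curve $E$ acquires semistable reduction over $\Q(E[q])$, and in particular over the completion $\Q_p(E[q])$. Since $E$ has potentially good reduction at $p$ and multiplicative reduction is preserved under base change, $E$ must in fact acquire \emph{good} reduction over $\Q_p(E[q])$. So the first condition is automatic for any prime $q \geq 3$, $q \neq p$, of good reduction for $E$.

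For the degree, set $G \defeq \Gal(\Q_p(E[q])/\Q_p)$; then $G$ embeds in $\Aut(E[q]) \simeq \GL_2(\F_q)$, of order $q(q-1)^2(q+1)$. I would decompose $G$ as an extension of the Frobenius image by the image of inertia and control each piece separately. Because $E$ has potentially good reduction at $p$, the inertia image is finite; for $p \geq 5$ it is moreover tame, of order dividing the order of the automorphism group of a smooth elliptic curve over an algebraically closed field (a divisor of $24$), hence coprime to $p$. For the Frobenius image, I would invoke Dirichlet's theorem on primes in arithmetic progressions to impose $q \not\equiv \pm 1 \pmod{p}$ in addition to the finitely many conditions ``$E$ has good reduction at $q$'' and ``$q \neq p$''; for $p \geq 5$ infinitely many such $q$ exist, and then $|G|$ is automatically coprime to $p$.

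The main obstacle is the case $p = 3$, where no prime $q \neq 3$ satisfies $q \not\equiv \pm 1 \pmod 3$, and moreover wild ramification at $3$ can a priori contribute a nontrivial $3$-subgroup to the inertia image. Handling this requires a more delicate local analysis: one argues that for $E$ potentially good ordinary at $3$ and a suitable choice of $q$, the image of $G_{\Q_p}$ in $\GL_2(\F_q)$ factors through a subgroup missing the $3$-Sylow, by examining the potential Kodaira type of $E$ at $3$ (which pins down the tame quotient of inertia) and picking $q$ so that the resulting image avoids $3$-torsion. Combining these ingredients produces the desired $q$.
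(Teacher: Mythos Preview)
For $p \geq 5$ your argument is correct, though unnecessarily elaborate: the paper simply takes $q = 3$, so that $|G|$ divides $|\GL_2(\F_3)| = 48$ and is automatically prime to $p$. Your separate inertia/Frobenius decomposition is redundant once you observe $|G| \mid q(q-1)^2(q+1)$ and impose $q \not\equiv 0,\pm 1 \pmod p$.

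The genuine gap is at $p = 3$, where your Kodaira-type sketch does not close. For every prime $q \neq 3$ one has $3 \mid (q-1)(q+1) \mid |\GL_2(\F_q)|$, so no crude divisibility bound can ever give $3 \nmid |G|$, and controlling only the \emph{tame} quotient of inertia says nothing about the wild part or about the Frobenius contribution to $|G|$. The paper's argument is structurally different: since any finite extension of $\Q_3$ is solvable, $G$ is a solvable subgroup of $\GL_2(\F_q)$, and one invokes the classification of such subgroups (contained in a Borel, in the normalizer of a split or non-split Cartan, or with projective image in $S_4$). Choosing $q \not\equiv 1 \pmod 3$ handles the Borel and split-Cartan cases; the non-split Cartan case (where $3 \mid q+1$ would be fatal) is excluded by additionally requiring $q$ to be a prime of good \emph{ordinary} reduction for $E$, which forces the image to contain a non-trivial split semisimple element; and the exceptional case is ruled out for $q > 5$. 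The existence of such a $q$ then uses that ordinary primes have density $1$ for non-CM curves, together with a separate check in the CM case that the CM field cannot be $\Q(\sqrt{-3})$ under the standing potentially-good-ordinary hypothesis. This solvable-subgroup classification, and in particular the extra ordinarity constraint on $q$, is the missing idea in your outline.
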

	\begin{proof}
		If $p>3$, one can choose $q=3$ and $\Gal(\bQ(E[q])/\bQ)$ is a subgroup of $\GL_2(\F_3)$, which has order $48$. In particular, one can assume there is a finite extension $L/\bQ$ with a place $u\mid p$ so that $E$ gains semistable reduction over $L_u$ with $[L_u:\bQ_p]$ being coprime to $p$.
		
		When $p = 3$, then there is a prime $3 < q \not\equiv 1 \pmod{3}$ of good reduction for $E$ such that $E$ acquires good reduction over $\Q_3(E[q])/\Q_3$. Note that this is a solvable extension with Galois group $G$ isomorphic to a subgroup of $\GL_2(\F_q)$. Hence by~\cite[Theorem~2.14]{KellerStoll2023} and since $\GL_2(\F_q)$ is not solvable for $q \geq 5$, it must be
		\begin{enumerate}[(a)]
			\item contained in a Borel subgroup,
			\item contained in a normalizer of a (split or non-split) Cartan subgroup, or
			\item exceptional, i.e., having projective image in $S_4$.
		\end{enumerate}
		We prove that we can find a prime $q$ as above such that $3 \nmid \#G$.
		\begin{enumerate}[(a)]
			\item In this case $3 \nmid \#G = (q-1)^2\cdot q$ because of our assumption on $q$.
			
			\item The normalizers of a Cartan have order dividing $2(q-1)$ in the split case, which is coprime to $3$ if $q \not\equiv 1 \pmod{3}$, and $2(q+1)$ in the non-split case. But if $q > 2$ is a prime of good ordinary reduction, then according to the proof of~\cite[Lemma~2.32]{KellerStoll2023} the image of Galois $\ol\rho_3(G_{\Q_q})$ contains a non-trivial semisimple split element, so we are not in the normalizer of a non-split Cartan case. But there is such a $q \not\equiv 1 \pmod{3}$: If $E$ is not CM, the density of ordinary primes is $1$, so we are good. If $E$ has CM by an imaginary quadratic field $K$, then we just need $K \ne \Q(\sqrt{-3})$. But if $K = \Q(\sqrt{-3})$, $E$ is isogenous to $y^2 = x^3 - 1$, which has potentially supersingular reduction at $3$.
			
			\item This cannot happen for $q > 5$ a prime of good reduction according to~\cite[Proposition~2.45]{KellerStoll2023}. \qedhere
		\end{enumerate}
	\end{proof}

	\section{Iwasawa Theory}\label{goodIwa}
	In this section, we discuss some important tools from Iwasawa theory. Let $E$ be an elliptic curve over $\Q$ and $f\in S_2(\Gamma_0(N))^\new$ be the weight $2$ newform associated to $E$. Let $p$ be a prime number. Throughout the paper, $K$ denotes an imaginary quadratic field satisfying the following hypotheses:
	\begin{assumption}\label[assumption]{assumption}
		\begin{enumerate}
			\item $p=v\bar{v}$ is split in $K$;
			\item $K$ satisfy the \emph{Heegner hypothesis}, i.e., every prime $\ell \mid N$ is split in $K$;
			\item the discriminant $D_K$ of $K$ is $<-3$ and odd. 
		\end{enumerate}
	\end{assumption}
	
	Let $K_\infty/K$ be the anticyclotomic $\bZ_p$-extension of $K$ and denote by $\Gamma$ its Galois group $\Gamma\coloneq\Gal(K_\infty/K)\cong \bZ_p$. For each $n\in \bN$, let $K_n\subset K_\infty$ be the subfield with $[K_n:K]=p^n$. The Iwasawa algebra $\Lambda\coloneq \bZ_p\llbracket \Gamma\rrbracket$ can be identified with a formal power series ring $\bZ_p\llbracket T\rrbracket$ be sending a topological generator $\gamma\in\Gamma$ to $1+T$.
	
	For a torsion $\Lambda$-module $M$, one knows that there is a \textit{pseudo-isomorphism} (i.e., a $\Lambda$-morphism with finite kernel and cokernel)\[
	M\sim \bigoplus_{i=1}^s\Lambda/(f_i^{k_i}) \oplus \bigoplus_{j=1}^t\Lambda/(p^{l_j})
	\]
	where each $f_i$ is an irreducible \textit{distinguished polynomial} with $f_i\equiv T^{\deg(f_i)}\pmod{p}$. We define the $\lambda,\mu$-invariants of $M$ to be\[
	\lambda(M)\coloneq\sum_{i=1}^s k_i\deg(f_i),
	\]
	\[\mu(M)\coloneq \sum_{j=1}^t l_j.\]
	The \textit{characteristic ideal} $\Char_\Lambda(M)$ of $M$ is the $\Lambda$-ideal generated by the \textit{characteristic polynomial}\[f_\Lambda(M)\coloneq p^{\mu(M)}\prod_{i=1}^s f_i^{k_i}.\]
	
	\begin{example}
		If $M_1,M_2$ are two $\Lambda$-torsion modules with $\Char_\Lambda(M_1)\subset\Char_\Lambda( M_2)$, then $\Char_\Lambda(M_1)=\Char_\Lambda(M_2)$ if and only if $\lambda(M_1)=\lambda(M_2)$ and $\mu(M_1)=\mu(M_2)$.
	\end{example}
	
	\subsection{Selmer structures and Kolyvagin systems}
	To state the Iwasawa Main Conjectures from which our $p$-converse theorem follows, we first need to introduce suitable \textit{Selmer groups}. They generalize the usual Selmer groups $\Sel_{p^\infty}(E/K)$ and $S_p(E/K)$ in the introduction. The following discussions are taken from~\cite[section~3.1]{CGLS}.
	
	\subsubsection{Selmer structures}
	Let $(R,\fm)$ be a complete Noetherian local ring with field of fractions of characteristic $0$ and with finite residue field of characteristic $p$, and let $M$ be a topological $R[G_K]$-module such that the $G_K$-action is unramified outside a finite set of primes. A \textit{Selmer structure} $\cF$ on $M$ is finite set $\Sigma=\Sigma(\cF)$ of places of $K$ containing $\infty$, the primes above $p$ and the places where $M$ is ramified, together with a choice of $R$-submodules (called \textit{local conditions}) $\H^1_\cF(K_w,M)\subset\H^1(K_w,M)$ for every $w\in\Sigma$. The associated \textit{Selmer group} is then defined as\[
	\H^1_\cF(K,M)\coloneq \ker\Bigl\{\H^1(K^\Sigma/K,M)\to \prod_{w\in\Sigma}\frac{\H^1(K_w,M)}{\H^1_\cF(K_w,M)}\Bigr\}.
	\]
	where $K^\Sigma$ is the maximal extension of $K$ unramified outside $\Sigma$.
	
	Some local conditions we will see frequently are the following:
	\begin{itemize}
		\item The \textit{strict} (resp.\ \textit{relaxed}) local condition: $\H^1_{\str}(K_w,M)\coloneq 0$ (resp.\ $\H^1_{\rel}(K_w,M)\coloneq\H^1(K_w,M) $);
		\item The \textit{unramified} local condition: $\H^1_{\nr}(K_w,M) \coloneq \ker(\H^1(K_w,M)\to \H^1(K_w^{\nr},M))$ where $K_w^{\nr}$ is the maximal unramified extension of $K_w$.
	\end{itemize}
	
	When $M$ is unramified at a prime $w \nmid p$, we also called the unramified local condition the \textit{finite} local condition $\H^1_f(K_w,M)\coloneq \H^1_{\nr}(K_w,M)$. The \textit{singular quotient} is defined by the exact sequence\[
	0\to \H^1_f(K_w,M)\to \H^1(K_w,M)\to \H^1_s(K_w,M)\to 0.
	\]
	
	Let $\sL_0\coloneq \sL_0(M)$ be the set of degree $2$ rational primes (that is, inert in $K$) of $\ell\ne p$ at which $M$ is ramified. For $K[\ell]$ the ring class field of conductor $\ell$, we define the \textit{transverse} local condition at $\lambda\mid\ell\in\sL_0$ by\[
	\H^1_{\tr}(K_\lambda,M)\coloneq \ker(\H^1(K[\ell]_\lambda,M)\to \H^1(K_{\lambda'},M))
	\]
	where $K[\ell]_{\lambda'}$ is the completion of $K[\ell]$ at any prime $\lambda$ above $\ell$.
	
	Lastly, given a submodule $N$ (resp.\ quotient) of $M$ and a local condition $\cF$ on $M$, we define the \textit{propagated} local condition on $N$, still denoted by $\cF$, to be the preimage (resp.\ image) of $\H^1_\cF(K_v,M)$ under the natural map\[
	\H^1(K_v,N)\to \H^1(K_v,M)\] (resp.\ $\H^1(K_v,M)\to\H^1(K_v,N)$).
	
	As in~\cite{How2004}, we call a \textit{Selmer triple} $(M,\cF,\sL)$ the data of a Selmer structure $\cF$ on $M$ and a subset $\sL\subset\sL_0$ with $\sL\cap\Sigma(\cF)=\emptyset$. Given a Selmer triple $(M,\cF,\sL)$ and given pairwise coprime integers $a,b,c$ divisible only by primes in $\sL_0$, we define the modified Selmer group $\H^1_{\cF^a_b(c)}(K,M)$ by choosing $\Sigma(\cF^a_b(c))=\Sigma(\cF)\cup\{w\mid abc\}$ and the local conditions\[
	\H^1_{\cF^a_b(c)}(K_\lambda,M)=\begin{cases}
		\H^1(K_\lambda,M) &\text{if }\lambda\mid a\\
		0 & \text{if }\lambda\mid b\\
		\H^1_{\tr}(K_\lambda,M) &\text{if }\lambda\mid c\\
		\H^1_\cF(K_\lambda,M) &\text{if }\lambda\nmid abc\\
	\end{cases}
	\]
	
	\begin{definition}
		Let $\Quot(M)$ denote the \textit{quotient category} of $M$ whose objects are quotients $M/IM$ of $M$ by ideals $I$ of $R$ and whose morphisms from $M/IM$ to $M/I'M$ are the maps induced by scalar multiplication $r\in R$ with $rI\subset I'$.
		
		A local condition functorial over $\Quot(M)$ is called \textit{Cartesian} if for any injective morphism $N\to M$ the local condition $\cF$ on $N$ is the same as the local condition obtained by propagation from $M$ to $N$.
	\end{definition}
	
	\begin{remark}
		By~\cite[Lemma 1.1.9]{MR04}, the unramified local condition is Cartesian.
	\end{remark}
	
	\subsubsection{Kolyvagin systems}\label{Kolysys}
	From now on let $T$ be a compact $R$-module with a continuous linear $G_K$-action that is unramified outside a finite set of primes. For each $\lambda\mid\ell\in\sL_0=\sL_0(T)$, let $I_\ell$ be the smallest ideal containing $\ell+1$ for which the Frobenius element $\Frob_\lambda\in G_{K_\lambda}$ acts trivially on $T/I_\ell T$. By class field theory, $\lambda$ splits completely in the Hilbert class field of $K$, and the $p$-Sylow subgroups of $G_\ell\coloneq\Gal(K[\ell]/K[1])$ and $\bk_\lambda^\times/\bF_\ell^\times$ are identified via the Artin symbol, where $\bk_\lambda$ is the residue field of $\lambda$. Hence by~\cite[Lemma 1.2.1]{MR04}, there is a \textit{finite-singular comparison isomorphism}\[
	\phi_\lambda^\fs\coloneq\H^1_f(K_\lambda,T/I_\ell T)\cong T/I_\ell T\cong \H^1_s(K_\lambda,T/I_\ell T)\otimes G_\ell
	\]
	
	Given a subset $\sL\subset\sL_0$, let $\sN$ denote the set of square-free products of primes $\ell\in\sL$, and for each $n\in\sN$, define\[
	I_n\coloneq\sum_{\ell\mid n}I_\ell\subset R,\ \ \ G_n\coloneq\bigotimes_{\ell\mid n}G_\ell,
	\]
	with the convention that $1\in\sN$, $I_1=0$ and $G_1=\bZ$.
	
	\begin{definition}
		A \textit{Kolyvagin system} for a Selmer triple $(T,\cF,\sL)$ is a collection of classes\[
		\kappa\coloneq\{\kappa_n\in\H^1_{\cF(n)}(K,T/I_n T)\otimes G_n\}_{n\in\sN}
		\]
		such that $(\phi_\lambda^\fs\otimes \mathbf{1})(\loc_\lambda(\kappa_n))=\loc_\lambda(\kappa_{n\ell})$ for all $n\ell\in\sN$.
	\end{definition}
	
	We denote by $\textbf{KS}(T,\cF,\sL)$ the $R$-module of Kolyvagin systems for $(T,\cF,\sL)$.
	
	\subsection{The Iwasawa Main Conjectures}
	In this section we introduce some Iwasawa Main Conjectures that will be needed in the proof of the $p$-converse theorems. We also discuss some strategies in proving known cases and how they could be adapted to new cases.
	
	\subsubsection{The Heegner Point Main Conjecture}
	When $E$ has good ordinary reduction at a prime $p$, several Iwasawa Main Conjectures have been formulated and proved in e.g.~\cite{CGLS}. The one that serves as a key ingredient in the proof of $p$-converse theorems is the following Heegner Point Main Conjecture first formulated by Perrin-Riou~\cite{PR87}.
	
	Fix a modular parametrization\[\pi\colon X_0(N)\to E.\]
	Then the Kummer images of Heegner points on $X_0(N)$ over ring class fields of $K$ of $p$-power conductor give rise to a class $\kappa^\Heeg\in S\coloneq\projlim S_p(E/K_n)$. The group $S$ is naturally a $\Lambda$-module and the class $\kappa^\Heeg$ is known to be non-$\Lambda$-torsion by results of Cornut and Vatsal~\cite{Cor02}, \cite{Vat03}. We put $X\coloneq \Hom_{\bZ_p}(\injlim\Sel_{p^\infty}(E/K_n), \bQ_p/\bZ_p)$.
	
	\begin{conjecture}[The Heegner Point Main Conjecture]\label{HPMC}
		Let $E/\bQ$ be an elliptic curve and $p>2$ be a prime of good ordinary reduction, and let $K$ be an imaginary quadratic field satisfying the Heegner hypothesis. Then both $S$ and $X$ have $\Lambda$-rank one, and\[
		\Char_\Lambda(X_\tors)=\Char_\Lambda(S/\Lambda\cdot\kappa^\Heeg)^2,
		\]
		where $X_\tors$ denote the $\Lambda$-torsion submodule of $X$.
	\end{conjecture}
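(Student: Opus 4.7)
The plan is to follow the three-step strategy outlined in Section~\ref{method}, specialized to the good ordinary Eisenstein setting that is the focus of this paper. Before carrying out the three steps, I would first establish the $\Lambda$-rank statement: by~\cite{Cor02,Vat03} the Heegner class $\kappa^\Heeg$ is non-$\Lambda$-torsion, so $\rk_\Lambda S \geq 1$, and global Poitou--Tate duality together with the self-duality (up to a twist) of the $\Lambda$-adic representation forces $\rk_\Lambda S = \rk_\Lambda X = 1$.

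\textbf{Step I (one-sided divisibility).} I would invoke the Kolyvagin system machinery of~\cite{How2004} applied to the $\Lambda$-adic Heegner Kolyvagin system, obtaining the structure theorem $X \sim \Lambda \oplus M \oplus M$ together with the divisibility
\[
\Char_\Lambda(X_\tors) \supset \Char_\Lambda\!\bigl(S/\Lambda\cdot\kappa^\Heeg\bigr)^2.
\]
In the Eisenstein case the residual irreducibility hypothesis of~\cite{How2004} fails, so following~\cite{CGLS} one either passes to a carefully chosen $G_K$-stable lattice $T$ whose semi-simplified residual representation makes the Kolyvagin argument go through on an appropriate subquotient, or modifies the Selmer structure accordingly so that Howard's axioms (H.0)--(H.4) hold for the replacement.

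\textbf{Step II (explicit reciprocity law).} I would use a Perrin-Riou big-logarithm map from the local cohomology at $v \mid p$ into $\Lambda$ that sends the $v$-component of $\kappa^\Heeg$ to the Bertolini--Darmon--Prasanna $p$-adic $L$-function $\cL_f$ of~\cite{BDP13}. This identifies the Heegner Point Main Conjecture with an equivalent Greenberg-style Main Conjecture relating a Greenberg Selmer group to $\cL_f$, so that the remaining equality of characteristic ideals can be tested on $\lambda$- and $\mu$-invariants.

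\textbf{Step III (matching Iwasawa invariants).} Using the Eisenstein short exact sequence~\eqref{Eis} and the congruence of $f$ to an Eisenstein series from~\cite{Kri16}, I would compute both the algebraic $\lambda,\mu$-invariants (of the Greenberg Selmer group) and the analytic ones (of $\cL_f$) in terms of the corresponding invariants for the characters $\phi$ and $\psi$. The classical Iwasawa Main Conjectures for CM characters~\cite{Rubin1991,CW78} then bridge the character invariants on the two sides, forcing equality. Combined with Step~I this upgrades the divisibility to equality and completes the proof. I expect Step~III to be the main obstacle: precisely matching the invariants requires careful control of Tamagawa factors and local conditions at the bad primes, and choosing the right Selmer structure so that the analytic and algebraic sides can be compared directly through the character Main Conjectures.
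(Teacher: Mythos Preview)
Your proposal is correct and follows essentially the same three-step strategy the paper uses (Kolyvagin system divisibility \`a la~\cite{How2004,CGLS}, equivalence with Greenberg's Main Conjecture via the Perrin-Riou regulator as in~\cite{Cas17,CastellaHsieh}, and comparison of Iwasawa invariants through the character Main Conjectures of~\cite{Rubin1991,CW78}). The only minor difference is that the paper obtains the $\Lambda$-rank one statements for $S$ and $X$ as an output of the Kolyvagin system argument itself (\cref{Hgstr}) rather than as a separate preliminary step via Poitou--Tate duality, but this is a matter of presentation rather than substance.
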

	
	Under~\cref{assumption}, the above conjecture is now a theorem by combined results of~\cite{CGLS}, \cite{CGS} and~\cite{KY24} in the Eisenstein case. The proof uses a variation of Kolyvagin system arguments systematically studied in~\cite{How2004} (for the `$\supset$'-divisibility) and a comparison of Iwasawa invariants on both sides to turn the divisibility into an equality. To perform the comparison, one needs to go to a different yet equivalent type of Iwasawa Main Conjecture, the \textit{Greenberg's Iwasawa Main Conjecture}.
	
	\subsubsection{The Greenberg's Main Conjecture}\label{GMC}
	Let $\mathfrak{S}_E$ be the modified Selmer group (called the \textit{(Greenberg's) unramified Selmer group}) obtained from $\Sel_{p^\infty}(E/K_\infty)\coloneq\injlim \Sel_{p^\infty}(E/K_n)$ by relaxing (resp. imposing triviality) at places above $v$ (resp. $\ol v$). Let $\fX_E\coloneq\Hom_{\bZ_p}(\mathfrak{S}_E,\bQ_p/\bZ_p)$ be its Pontryagin dual. From the work of Bertolini--Darmon--Prasanna, there is a $p$-adic $L$-function $\cL_E\coloneq \cL_f\in\Lambda^\ur$ interpolating the central values of the $L$-function of $f/K$ twisted by certain characters of $\Gamma$ of infinite order. Here $\Lambda^\nr\coloneq \Lambda\hat{\otimes}_{\bZ_p}\bZ_p^\nr$ where $\bZ_p^\nr$ is the completion of the ring of integers of the maximal unramified extension of $\bQ_p$.
	
	\begin{conjecture}[Greenberg's Iwasawa Main Conjecture]\label{GrMC}
		Let $E/\bQ$ be an elliptic curve and $p>2$ a prime of good ordinary reduction for $E$. Let $K$ be an imaginary quadratic field satisfying the Heegner hypothesis where $p$ splits. Then $\fX_E$ is $\Lambda$-torsion, and\[
		\Char_\Lambda(\fX_E)\Lambda^\nr=(\cL_E)
		\]
		as ideals in $\Lambda^\nr$.
	\end{conjecture}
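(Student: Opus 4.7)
The plan is to follow the three-step strategy outlined in~\cref{method}: first establish one divisibility of the Heegner Point Main Conjecture~\cref{HPMC} via a Kolyvagin system argument, then transfer it to one divisibility in Greenberg's formulation through an explicit reciprocity law, and finally upgrade this to an equality by matching Iwasawa invariants on both sides using the Eisenstein congruence.

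For the first step, I would apply Howard's machinery~\cite{How2004} to the Heegner-point Kolyvagin system, whose bottom class $\kappa^\Heeg$ is non-$\Lambda$-torsion by Cornut--Vatsal. Howard's structure theorem then yields both that $X$ has $\Lambda$-rank one and the divisibility~\eqref{1div}. To translate to the Greenberg side, I would compose the $v$-localization of $\kappa^\Heeg$ with a Perrin-Riou/BDP big logarithm map as constructed in~\cite{BDP13} and~\cite{JLZ21}: the image is $\cL_E$ up to a $\Lambda^\nr$-unit, and Poitou--Tate duality combined with the fact that Greenberg's unramified condition is dual to the Heegner local condition at $v, \ol v$ converts the Heegner-side divisibility into a one-sided divisibility $\Char_\Lambda(\fX_E)\Lambda^\nr \supset (\cL_E)$ in $\Lambda^\nr$.

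For the reverse divisibility, I would exploit the Eisenstein assumption. The sequence~\eqref{Eis} decomposes the mod-$p$ reduction of $\fX_E$, up to finite error, into Greenberg Selmer groups attached to $\F_p(\phi)$ and $\F_p(\psi)$; on the analytic side, the congruence of $f$ with an Eisenstein series in~\cite{Kri16} propagates through the BDP construction to a congruence of $\cL_E$ with a product of Katz $p$-adic $L$-functions for $\phi$ and $\psi$. Invoking the Iwasawa Main Conjecture for Hecke characters of $K$ due to Rubin~\cite{Rubin1991}, the algebraic and analytic $\mu$- and $\lambda$-invariants match on each character piece, and summing forces $\mu(\fX_E)=\mu(\cL_E)$ and $\lambda(\fX_E)=\lambda(\cL_E)$. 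This promotes the divisibility of the previous step to the desired equality.

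The main obstacle I expect is the invariant comparison: one must carefully track how the Greenberg local conditions at $v$ and $\ol v$ interact with the filtration~\eqref{Eis} so that the mod-$p$ Selmer groups split cleanly into character pieces without spurious finite contributions, and one must verify that Kriz's Eisenstein congruence is strong enough to yield a $\Lambda^\nr$-congruence between $\cL_E$ and the corresponding Katz $L$-function with the correct Euler factors at ramified primes. This is essentially the mechanism implemented in~\cite{CGLS}, and the bulk of the work in the rest of the paper is to port it to the potentially good ordinary setting via an auxiliary base change where $E$ acquires good reduction.
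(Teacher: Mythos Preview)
Your proposal is correct and follows essentially the same three-step strategy the paper describes in \cref{Iwasawago}: Howard's Kolyvagin system argument for one divisibility of the Heegner Point Main Conjecture, the explicit reciprocity law via the Perrin-Riou regulator (\cref{HgtoBDP}, \cref{equiv}) to pass to the Greenberg formulation, and the Eisenstein congruence of Kriz together with Rubin's Main Conjecture for characters to match $\mu$- and $\lambda$-invariants and upgrade to equality (\cref{IMC}). Note that the statement is recorded as a conjecture in general; the argument you outline, like the paper's, establishes it under the Eisenstein hypothesis via the results of \cite{CGLS}, \cite{CGS}, and \cite{KY24}.
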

	As is already observed in~\cite[Appendix A]{Cas17} (see also~\cite[Theorem 5.2]{BCK21}), in the good ordinary setting, the Greenberg's Main Conjecture is equivalent to the Heegner Point Main Conjecture, and is therefore also a theorem now by the results of the aforementioned authors in the Eisenstein case. The proof uses an explicit congruence of $f$ to a certain Eisenstein series $G$ and compares the algebraic side (the Selmer groups for $f$ and those for the two characters appearing in the semisimplification of $\ol\rho_f$) and the analytic side (the interpolation properties of the $p$-adic $L$-function of $G$ (hence that of $f$) to those of the same characters).
	
	\subsection{Prototype: Iwasawa Main Conjectures with good ordinary reduction}\label{Iwasawago}
	In this subsection we introduce the ingredients that go into the proof of~\cref{HPMC} and~\cref{GrMC} in~\cite{CGLS} and subsequent papers. We assume that $E$ is an elliptic curve and $f$ is a weight $2$ newform associated to $E$.
	
	We begin with a proof of one divisibility using Kolyvagin system argument systematically developed in~\cite{How2004} (modified in~\cite{CGLS} in the Eisenstein case). Recall the notations from~\cref{Kolysys}. We start with stating some hypotheses that the Selmer triples $(T,\cF,\sL)$ we will consider shall satisfy.
	\begin{itemize}
		\item[(H.0)] $T$ is a free $R$-module of rank $2$.
		\item[(H.1)] $T/\fm T$ is reducible with $H^0(K,T/\fm T)=0$.
		\item[(H.2)] For every $v\in\Sigma(\cF)$ the local condition $\cF$ at $v$ is Cartesian.
		\item[(H.3)] There is a perfect, symmetric $R$-bilinear pairing \[
		(\cdot ,\cdot )\colon T\times T\to R(1)
		\]which satisfies $(s^\sigma,t^{\tau\sigma\tau^{-1}})=(s,t)^\sigma$ for every $s,t\in T$ and $\sigma\in G_K$. Here $\tau$ is a fixed complex conjugation. 
		We assume that the local condition $\cF$ is its own exact orthogonal complement under the induced local pairing\[
		\langle \cdot,\cdot\rangle_v\colon \H^1(K_v,T)\times\H^1(K_{v^\tau},T)\to R
		\]for every place $v$ of $K$.
	\end{itemize}
	
	The Selmer triple of particular interest is $(T_\alpha,\cF_\ord,\sL_E)$, where
	\begin{itemize}
		\item $\alpha\colon \Gamma\to R^\times$ is a character with values in the ring of integers of a finite extension $\Phi/\bQ_p$ and $R(\alpha)$ is the free $R$-module of rank $1$ on which $G_K$ acts via the projection $G_K\hookrightarrow\Gamma$ composed with $\alpha$;
		\item $T_\alpha\coloneq T_pE\otimes_{\bZ_p}R(\alpha)$ admits a $G_K$-action given by $\rho_\alpha\coloneq\rho_E\otimes \alpha$ where $\rho_E\colon G_\bQ\to \Aut_{\bZ_p}(T_pE)$ gives the $G_\bQ$-action on the Tate module of $E$;
		\item $\cF_\ord$ is the \textit{ordinary} Selmer structure on $V_\alpha\coloneq T_\alpha\otimes\Phi$ defined with $\Sigma(\cF_\ord)=\{w\mid pN\}$ and\[
		\H^1_{\cF_\ord}(K_w,V_\alpha)\coloneq\begin{cases}
			\im(\H^1(K_w,\Fil^+_w(V_\alpha))\to \H^1(K_w,V_\alpha)) &\text{if }w\mid p;\\
			\H^1_\ur(K_w, V_\alpha) &\text{else},
		\end{cases}
		\] 
		where $\Fil^+_w(T_pE)=\ker\{T_pE\to T_p\tilde{E}\}$ is the kernel of reduction at $w$ and $\Fil^+_w(T_\alpha)\coloneq\Fil^+_w(T_pE)\otimes R(\alpha)$, $\Fil^+_w(V_\alpha)\coloneq \Fil^+_w(T_\alpha)\otimes\Phi$.
		
		Let $\cF_\ord$ also denote the Selmer structure on $T_\alpha$ and $A_\alpha\coloneq T_\alpha\otimes \Phi/R\cong V_\alpha/T_\alpha$ by propagating $\H^1_{\cF_\ord}(K_w,V_\alpha)$ under the maps induced by the exact sequence \[
		0\to T_\alpha\to V_\alpha \to A_\alpha\to 0;
		\]
		\item Let $\sL_E\coloneq\{\ell\in\sL_0(T_pE):a_\ell\equiv\ell+1\equiv 0 \text{(mod }p)\}$, where $a_\ell=\ell+1-\vert \tilde{E}(\F_\ell)\vert$, and $\sN=\sN(\sL_E).$
	\end{itemize}
	
	This Selmer triple satisfies the hypotheses $(\H.0)$ to $(\H.4)$ as in~\cite{CGLS}, from which they deduced the following important intermediate result using a Kolyvagin system argument.
	\begin{theorem}[Theorem 3.2.1 in~\cite{CGLS}]\label{middle}
		Assume $E(K)[p]=0$. Suppose $\alpha\ne 1$ and there is a Kolyvagin system $\kappa_\alpha=\{\kappa_{\alpha,n}\}\in \KS(T_\alpha,\cF_\ord,\sL_E)$ with $\kappa_{\alpha,1}\ne 0$. Then $\H^1_{\cF_\ord}(K,T_\alpha)$ has rank one, and there is a finite $R$ module $M_\alpha$ such that\[
		\H^1_{\cF_\ord}(K,A_\alpha)\cong(\Phi/R)\oplus M_\alpha\oplus M_\alpha
		\]with \[\length_R(M_\alpha)\leq\length_R(\H^1_{\cF_\ord}(K,T_\alpha)/R\cdot\kappa_{\alpha,1})+E_\alpha\]for some constant $E_\alpha\in \bZ_{\geq 0}$ depending only on $C_\alpha,T_pE,$ and $\rk_{\bZ_p}(R)$. 
		
		Here $C_\alpha\coloneq\begin{cases}
			v_p(\alpha(\gamma)-\alpha^{-1}(\gamma)) &\alpha\ne\alpha^{-1},\\
			0&\alpha=\alpha^{-1},\\
		\end{cases}$ where $v_p$ is the $p$-adic valuation normalized so that $v_p(p)=1$ and $\gamma\in\Gamma$ is a topological generator.
		
	\end{theorem}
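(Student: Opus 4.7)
The plan is to deduce this structure theorem from the general Kolyvagin system machinery of Mazur--Rubin and Howard, as adapted to the residually reducible setting in~\cite{CGLS}. The first step is to verify the hypotheses (H.0)--(H.3) for the Selmer triple $(T_\alpha, \cF_\ord, \sL_E)$: (H.0) is immediate since $T_\alpha = T_pE \otimes_{\bZ_p} R(\alpha)$ is free of rank two; (H.1) combines the Eisenstein reducibility of $E[p]$ with the assumption $E(K)[p]=0$ to force $\H^0(K, T_\alpha/\fm T_\alpha)=0$; (H.2) is the Cartesian property of the ordinary and unramified local conditions, granted by~\cite[Lemma~1.1.9]{MR04}; and (H.3) is the perfect self-duality pairing obtained from the Weil pairing on $T_pE$ together with the anticyclotomic relation $\alpha^\tau = \alpha^{-1}$, which intertwines $R(\alpha)$ with $R(\alpha^{-1})$ under complex conjugation.

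Next I would establish the rank-one statement. The nonvanishing of $\kappa_{\alpha,1}$ immediately gives $\rk_R \H^1_{\cF_\ord}(K, T_\alpha) \geq 1$; the opposite bound follows from Poitou--Tate global duality together with (H.3), because $\cF_\ord$ is its own orthogonal complement and the local Euler characteristics cancel in the anticyclotomic direction. The decomposition $\H^1_{\cF_\ord}(K, A_\alpha) \cong (\Phi/R) \oplus M_\alpha \oplus M_\alpha$ then drops out of Pontryagin duality applied to this computation: the rank-one free part of the dual Selmer group contributes the single copy of $\Phi/R$, and self-duality of $\cF_\ord$ forces the finite torsion part to be isomorphic to its own Pontryagin dual, so it must split as $M_\alpha \oplus M_\alpha$.

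For the length inequality I would run the Kolyvagin-system induction on the number of prime divisors of $n \in \sN$. At each step the finite--singular comparison maps $\phi_\lambda^\fs$ translate the classes $\kappa_{\alpha,n}$ into local obstructions detecting elements of $M_\alpha$: a Chebotarev density argument inside the extension cut out by $T_\alpha/\fm T_\alpha$ produces Kolyvagin primes $\ell \in \sL_E$ at which the localization of $M_\alpha$ becomes visible, so that bounding the local invariants of $\kappa_{\alpha,n}$ bounds all of $M_\alpha$. The net outcome is the desired inequality with $\length_R(\H^1_{\cF_\ord}(K, T_\alpha)/R\cdot\kappa_{\alpha,1})$ as the leading term.

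The main obstacle, and the origin of the additive error $E_\alpha$, is precisely the residually reducible setting: the classical Chebotarev step in~\cite{MR04,How2004} requires the mod-$\fm$ image of Galois to contain an element acting semisimply with two distinct eigenvalues on $T/\fm T$, which can fail once $T/\fm T$ is reducible. To compensate one studies the Jordan--H\"older constituents $\F_p(\phi)$ and $\F_p(\psi)$ of $E[p]$ separately and feeds the resulting discrepancies into the induction. The constant $C_\alpha = v_p(\alpha(\gamma) - \alpha^{-1}(\gamma))$ enters because the self-duality pairing on $T_\alpha$ degenerates as $\alpha$ approaches a self-dual character; keeping $E_\alpha$ uniform and dependent only on $C_\alpha$, $\rk_{\bZ_p}(R)$ and fixed invariants of $T_pE$ is the most delicate bookkeeping step of the argument.
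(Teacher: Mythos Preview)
The paper does not give its own proof of this statement: it is quoted verbatim as Theorem~3.2.1 of~\cite{CGLS} and used as a black box, with only the surrounding remark that ``this Selmer triple satisfies the hypotheses (H.0) to (H.4) as in~\cite{CGLS}, from which they deduced the following important intermediate result using a Kolyvagin system argument.'' There is therefore nothing in the present paper to compare your write-up against.

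That said, your sketch is a faithful outline of the argument as it actually runs in~\cite{CGLS}: verification of (H.0)--(H.3), rank one via global duality and self-duality of $\cF_\ord$, the $M_\alpha\oplus M_\alpha$ structure from the Flach-type pairing, and the Kolyvagin induction over $n\in\sN$ with a Chebotarev step. One point to tighten: the error term $E_\alpha$ in~\cite{CGLS} does not arise primarily from a degeneration of the self-duality pairing as $\alpha\to\alpha^{-1}$, but rather from the failure of the map $T_\alpha/\fm^k\to T_{\alpha^{-1}}/\fm^k$ (needed to compare Selmer groups for $\alpha$ and $\alpha^{-1}$ in the induction) to be an isomorphism, and from the lack of a big-image element forcing one to work with the characters $\phi,\psi$ separately; the constant $C_\alpha$ measures the first of these obstructions. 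Your description of the source of $E_\alpha$ is slightly off in emphasis, though the bookkeeping you describe is in the right spirit.
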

	
	This partial result is an analogue of~\eqref{1div} with an `error term'. As is explained in~\cite{CGLS}, this error term is needed to avoid the use of the classical `big image' assumption (that is, $\rho_E\mid_{G_K}\colon G_K\to \End_{\bZ_p}(T_pE)$ is surjective, which is automatically satisfied in the residually irreducible setting).
	
	To obtain a reversed divisibility using tools from Iwasawa theory, one needs to consider everything `$\Lambda$-adically'. Consider the $\Lambda$-modules \begin{center} $M_E\coloneq (T_pE)\otimes_{\bZ_p}\Lambda^\vee$ and $\bT\coloneq M_E^\vee(1)\cong (T_pE)\otimes_{\bZ_p}\Lambda$\end{center}where the $G_K$-action on $\Lambda^\vee$ is given by thee inverse $\Psi^{-1}$ of the tautological character $\Psi:G_K\twoheadrightarrow\Gamma\hookrightarrow\Lambda^\times$.
	
	For $w$ a prime above $p$, put\begin{center}
		$\Fil^+_w(M_E)\coloneq\Fil^+_w(T_pE)\otimes_{\bZ_p}\Lambda^\vee$ and  $\Fil^+_w(\bT)\coloneq\Fil^+_w(T_pE)\otimes_{\bZ_p}\Lambda$.
	\end{center}
	
	Define the \textit{ordinary} Selmer structure $\cF_\Lambda$ on $M_E$ and $\bT$ by\[
	\H^1_{\cF_\Lambda}(K_w,M_E)\coloneq\begin{cases}
		\im(\H^1(K_w,\Fil^+_w(M_E))\to\H^1(K_w,M_E)) & \text{if }w\mid p,\\
		0&\text{else},
	\end{cases}
	\]
	and\[\H^1_{\cF_\Lambda}(K_w,\bT)\coloneq\begin{cases}
		\im(\H^1(K_w,\Fil^+_w(\bT))\to\H^1(K_w,\bT)) & \text{if }w\mid p,\\
		0&\text{else}.
	\end{cases}
	\]
	Denote by\[
	\cX=\H^1_{\cF_\Lambda}(K,M_E)^\vee=\Hom_{cts}(\H^1_{\cF_\Lambda}(K,M_E),\bQ_p/\bZ_p)
	\]
	the Pontryagin dual of the associated Selmer group $\H^1_{\cF_\Lambda}(K,M_E)$. Also recall the $\sL_E$ defined earlier.
	
	The following theorem, first obtained in~\cite[Theorem 3.4.2]{CGLS} by specializing at all height one primes of $\Lambda$ except at $p$ and $\fP_0\coloneq(\gamma-1)$, is later modified in~\cite[Theorem 6.5.1]{CGS} to hold without the need to invert any prime.
	
	\begin{theorem}[See Theorem 3.0.2 in~\cite{KY24}.]\label{Hgstr}
		Suppose there is a Kolyvagin system $\kappa\in\KS(\bT,\cF_\Lambda,\sL_E)$ with $\kappa_1\ne 0$. Then $\H^1_{\cF_\Lambda}(K,\bT)$ has $\Lambda$-rank one, and there is a finitely generated torsion $\Lambda$-module $M$ such that\begin{enumerate}
			\item[(i)] $\cX\sim\Lambda\oplus M\oplus M,$
			\item[(ii)] $\Char_\Lambda(M)$ divides $\Char_\Lambda(\H^1_{\cF_\Lambda}(K,\bT)/\Lambda\kappa_1).$
		\end{enumerate}
	\end{theorem}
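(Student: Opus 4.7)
The plan is to deduce the $\Lambda$-adic statement from the character-by-character result~\cref{middle} by a specialization-and-patching argument, following the scheme of~\cite{How2004} modified for the residually reducible setting as in~\cite{CGLS,CGS}. The first step is to check that the Selmer triple $(\bT,\cF_\Lambda,\sL_E)$ satisfies the hypotheses (H.0)--(H.3) at the $\Lambda$-adic level: freeness of $\bT$ of rank two over $\Lambda$; reducibility of the residual representation with trivial $H^0(K,\bT/\fm\bT)$ by the Eisenstein assumption and~\cref{assumption}; Cartesianness of $\cF_\Lambda$, which reduces to the Cartesian property of the ordinary local condition at primes above $p$ (and the unramified condition elsewhere); and self-duality of $\cF_\Lambda$ under the $\Lambda$-adic Tate pairing $\bT\times\bT\to \Lambda(1)$.

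Next, I would obtain the rank statement: $\H^1_{\cF_\Lambda}(K,\bT)$ is a $\Lambda$-module of rank one. The lower bound comes from the hypothesis $\kappa_1\neq 0$ combined with the fact that the localization of $\kappa_1$ at $v\mid p$ generates a rank-one submodule by the $\Lambda$-adic version of Cornut--Vatsal non-triviality propagated through the Kolyvagin system axioms. The upper bound follows from a global Euler characteristic computation for $\cF_\Lambda$ using Poitou--Tate duality, which also shows $\cX$ has $\Lambda$-rank at least one with the generic part accounting for a single copy of $\Lambda$.

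The structural pseudo-isomorphism $\cX\sim\Lambda\oplus M\oplus M$ in (i) then follows from the self-duality of $\cF_\Lambda$ and the structure theorem for finitely generated $\Lambda$-modules: the torsion part of $\cX$ carries an induced skew-Hermitian pairing (the $\Lambda$-adic Flach/Cassels--Tate pairing), forcing its characteristic ideal to be a square of the form $\Char_\Lambda(M)^2$ for some torsion $\Lambda$-module $M$. This is exactly the $\Lambda$-adic analogue of the decomposition in~\cref{middle}.

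The heart of the argument, and the main obstacle, is the divisibility (ii). The strategy is to specialize at height-one primes $\fP\subset\Lambda$. For $\fP\neq p\Lambda,(\gamma-1)$ the quotient $\bT/\fP\bT$ is of the form $T_{\alpha_\fP}$ for a character $\alpha_\fP$ of $\Gamma$, and the image of $\kappa$ gives a Kolyvagin system in $\KS(T_{\alpha_\fP},\cF_\ord,\sL_E)$ whose bottom class is non-zero for almost all $\fP$; applying~\cref{middle} yields a bound on $\length(M_{\alpha_\fP})$ in terms of $\length(\H^1_{\cF_\ord}(K,T_{\alpha_\fP})/R\cdot\kappa_{\alpha_\fP,1})+E_{\alpha_\fP}$. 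One then shows, as in~\cite[Theorem 3.4.2]{CGLS}, that after summing over $\fP$ these local bounds assemble to the divisibility $\Char_\Lambda(M)\mid\Char_\Lambda(\H^1_{\cF_\Lambda}(K,\bT)/\Lambda\kappa_1)$ up to controlled powers of the exceptional primes. The delicate point, handled in~\cite[Theorem 6.5.1]{CGS}, is to upgrade this to a \emph{genuine} divisibility of characteristic ideals, without inverting $p\Lambda$ or $(\gamma-1)$. This requires tracking the $M\oplus M$ structure integrally through specialization using the Cartesianness of $\cF_\Lambda$, and showing that the uniform error constants $E_{\alpha_\fP}$ contribute only pseudo-null modules that are absorbed in the pseudo-isomorphism; the Eisenstein hypothesis is essential here because it replaces the classical big-image assumption of~\cite{How2004} by a careful analysis of the two characters in~\eqref{Eis}.
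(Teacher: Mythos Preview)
Your outline is correct and matches the approach of the references the paper defers to (\cite[Theorem~3.4.2]{CGLS}, \cite[Theorem~6.5.1]{CGS}, \cite[Theorem~3.0.2]{KY24}); the paper itself gives no independent proof of this statement. Two small corrections worth noting: Cornut--Vatsal plays no role inside the abstract theorem---once $\kappa_1\neq 0$ is assumed, its non-torsion follows simply from the $\Lambda$-torsion-freeness of $\H^1_{\cF_\Lambda}(K,\bT)$ (a consequence of $H^0(K,\bT/\fm\bT)=0$); and in the cited proofs the $M\oplus M$ structure of $\cX_\tors$ is obtained by specializing at height-one primes and invoking the finite-level result~\cref{middle} at each $\fP$, then patching, rather than from an independent $\Lambda$-adic Cassels--Tate pairing as you describe.
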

	
	Now using the Heegner point Kolyvagin system constructed in~\cite[Theorem 4.1.1]{CGLS}, one obtains a one-side divisibility of the Heegner Point Main Conjecture under mild hypothesis.\begin{theorem}\label{Hg1div}
		Assume $E(K)[p]=0$. Then $\H^1_{\cF_\Lambda}(K,\bT)$ has $\Lambda$-rank one, and there is a finitely generated torsion $\Lambda$-module $M$ such that\begin{enumerate}
			\item[(i)] $\cX\sim\Lambda\oplus M\oplus M,$
			\item[(ii)] $\Char_\Lambda(M)$ divides $\Char_\Lambda(\H^1_{\cF_\Lambda}(K,\bT)/\Lambda\kappa_1).$
		\end{enumerate}
	\end{theorem}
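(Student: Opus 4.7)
The plan is to deduce \cref{Hg1div} directly from \cref{Hgstr} once a suitable Heegner point Kolyvagin system is produced and the structural hypotheses underlying \cref{Hgstr} are verified for the triple $(\bT,\cF_\Lambda,\sL_E)$. First I would invoke~\cite[Theorem 4.1.1]{CGLS}, which takes the norm-compatible system of Kummer images of Heegner points over ring class fields of $K$ of $p$-power conductor and packages it into a $\Lambda$-adic Kolyvagin system $\kappa^{\Heeg}=\{\kappa_n\}\in\KS(\bT,\cF_\Lambda,\sL_E)$, whose bottom class $\kappa_1$ is identified with the natural image in $\H^1_{\cF_\Lambda}(K,\bT)$ of the Heegner class considered in the introduction.

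Next I would verify that $\kappa_1\neq 0$. By construction, $\kappa_1$ corresponds via a suitable identification to the Heegner class $\kappa^{\Heeg}\in S=\projlim_n S_p(E/K_n)$. The non-vanishing of this class follows from the results of Cornut and Vatsal~\cite{Cor02,Vat03}, which moreover guarantee that $\kappa^{\Heeg}$ is non-$\Lambda$-torsion. This is precisely the input required by \cref{Hgstr}.

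With the non-zero Kolyvagin system in hand, I would then check that the running hypotheses (H.0)--(H.3) on the Selmer triple hold for $(\bT,\cF_\Lambda,\sL_E)$. Hypothesis (H.0) is immediate from $\bT\cong T_pE\otimes_{\bZ_p}\Lambda$, which is free of rank $2$. Hypothesis (H.1), that $\bT/\fm\bT\cong E[p]$ is reducible with vanishing $G_K$-invariants, uses the Eisenstein assumption (reducibility of $E[p]$) together with the running hypothesis $E(K)[p]=0$. Hypothesis (H.2) (Cartesian property of the local conditions) reduces to the unramified case away from $p$ and to the ordinary filtration at primes above $p$, both of which are Cartesian by the standard computations recalled after the definition. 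Hypothesis (H.3) (self-orthogonality of $\cF_\Lambda$ under the $\Lambda$-adic local Tate pairing) follows from the fact that $\Fil^+_w(\bT)$ and its quotient are each of $\Lambda$-rank one, together with the local duality for the ordinary filtration, exactly as in~\cite{CGLS}.

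The main work has already been absorbed into \cref{Hgstr}, so once the hypotheses are in place the conclusion is formal: the theorem gives a finitely generated torsion $\Lambda$-module $M$ with $\cX\sim\Lambda\oplus M\oplus M$ and $\Char_\Lambda(M)\mid\Char_\Lambda(\H^1_{\cF_\Lambda}(K,\bT)/\Lambda\kappa_1)$, which is exactly (i) and (ii). The most delicate point, conceptually, is the non-triviality of $\kappa_1$: one must trace the Cornut--Vatsal non-vanishing through the $\Lambda$-adic formalism to conclude that the image of the Heegner norm-compatible system survives in $\H^1_{\cF_\Lambda}(K,\bT)$. This is standard but is the only input that is not purely structural.
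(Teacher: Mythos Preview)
Your proposal is correct and follows essentially the same approach as the paper: the paper simply states that by plugging the Heegner point Kolyvagin system of~\cite[Theorem~4.1.1]{CGLS} into \cref{Hgstr} one obtains the result, with the non-triviality of $\kappa_1$ implicitly coming from Cornut--Vatsal as recalled earlier. Your write-up just makes these steps (and the verification of (H.0)--(H.3)) explicit.
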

	
	\begin{remark}
		\begin{itemize}
			\item The assumption $E(K)[p]=0$ (equivalently, the hypothesis $(\H.1)$) is not essential. Since the Iwasawa Main Conjectures are invariant under isogeny, we are content with working with an isogenous curve which satisfies the assumption, thanks to Ribet's Lemma. (See also~\cite[section 0.2, section 1.4]{KY24}.)
			\item If one considers the most general setting as in~\cite{KY24}, the `$\kappa_1$' might be a $p$-power times the Kolyvagin class constructed in~\cite{CGLS}. The following equivalence of the anticyclotomic Main Conjectures will also be slightly different (see~\cite[Theorem 3.0.6, Theorem 3.0.7]{KY24}). For simplicity we do not discuss the most general case and leave the details until Chapter 3. 
		\end{itemize} 
	\end{remark}
	
	To get the reversed divisibility, we now appeal to the Greenberg's Main Conjecture recalled in~\cref{GMC}. As before, let $\cF_\Gr$ denote Greenberg's local conditions, that is, \[\H^1_{\cF_\Gr}(K_w,-)=\begin{cases}
		\H^1(K_w,-)&\text{if }w=v,\\
		0&\text{if }w=\ol v,\\
		\H^1_{\ur}(K_w,-)&\text{else,}
	\end{cases}\] where $-=M_E$ or $\bT$. In general, one should consider the \textit{unramified} Selmer groups introduced in~\cite{KY24}, where the local condition at $\ol v$ is replaced by \[\ker\bigl(\H^1(K_{\ol v},-)\xrightarrow{\res}\H^1(I_{\ol v},-)^{G_{\ol v}/I_{\ol v}}\bigr)\] where $I_{\ol v}$ is the inertia subgroup at $\ol v$. However, it is shown in \textit{loc.\ cit.} that these two Selmer groups generate the same characteristic $\Lambda$-ideals for all relevant modules. Therefore for simplicity we will stick to the Greenberg's Selmer group in this subsection. We first study the comparison between the two types of Iwasawa Main Conjectures.
	
	\begin{proposition}[Proposition 4.2.1 in~\cite{CGLS}]\label[proposition]{equiv}
		Assume that $p=v\ol v$ splits in $K$ and that $E(K)[p]=0$. Then the following statements are equivalent:\begin{itemize}
			\item[(i)] Both $\H^1_{\cF_\Lambda}(K,\bT)$ and $\cX=\H^1_{\cF_\Lambda}(K,M_E)^\vee$ have $\Lambda$-rank one, and the divisibility\[
			\Char_\Lambda(\cX_\tors)\supset\Char_\Lambda(\H^1_{\cF_\Lambda}(K,\bT)/\Lambda\kappa_\infty)^2
			\]
			holds in $\Lambda$.
			\item[(ii)] Both $\H^1_{\cF_{\Gr}}(K,\bT)$ and $\fX_E=\H^1_{\cF_\Gr}(K,M_E)^\vee$ are $\Lambda$-torsion, and the divisibility\[
			\Char_\Lambda(\fX_E)\Lambda^\ur\supset (\cL_E)
			\] holds in $\Lambda^\ur$.
		\end{itemize}
		Moreover, the same result holds for opposite divisibilities.
	\end{proposition}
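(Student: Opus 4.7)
The plan is to compare the ordinary structure $\cF_\Lambda$ and Greenberg's structure $\cF_\Gr$ via Poitou--Tate global duality, and to translate characteristic ideals using the explicit reciprocity law that identifies the image of $\kappa_\infty$ at $\bar v$ with the BDP $p$-adic $L$-function $\cL_E$. The two structures agree away from $p$; at $p$, $\cF_\Lambda$ imposes the $\Fil^+$ condition at both $v$ and $\bar v$, whereas $\cF_\Gr$ is relaxed at $v$ and strict at $\bar v$, and the discussion on $M_E$ proceeds dually.

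First I would introduce the intermediate Selmer structure $\cF'$ that coincides with $\cF_\Lambda$ at $v$ and with $\cF_\Gr$ at $\bar v$, so that $\cF'\subset\cF_\Lambda$ and $\cF'\subset\cF_\Gr$. The tautological comparisons
\[
\H^1_{\cF'}(K,\bT)\inj\H^1_{\cF_\Lambda}(K,\bT),\qquad \H^1_{\cF'}(K,\bT)\inj\H^1_{\cF_\Gr}(K,\bT)
\]
have cokernels controlled by the local images in $\H^1(K_{\bar v},\Fil^+\bT)$ and $\H^1(K_v,\bT)/\H^1(K_v,\Fil^+\bT)$ respectively. Under $E(K)[p]=0$ the relevant $\H^0$-terms vanish, and Tate's local Euler--Poincar\'e formula shows that each of the local cohomology groups $\H^1(K_w,\Fil^+\bT)$ is a free $\Lambda$-module of rank one. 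Parallel sequences, dual to these under global Poitou--Tate, hold with $M_E$ in place of $\bT$ and relate $\cX$, $\fX_E$, and the corresponding intermediate Selmer dual.

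Counting $\Lambda$-ranks along these sequences shows that $\H^1_{\cF_\Lambda}(K,\bT)$ and $\cX$ both have $\Lambda$-rank one if and only if $\H^1_{\cF_\Gr}(K,\bT)$ and $\fX_E$ are both $\Lambda$-torsion, each of the two local contributions at $v,\bar v$ accounting for exactly one unit of rank. For characteristic ideals, global duality produces an identification, up to units, of $\Char_\Lambda(\cX_\tors)$ with the product of $\Char_\Lambda(\fX_E)$ and two symmetric "local" factors at $v$ and $\bar v$; modulo $\kappa_\infty$-torsion the characteristic ideal of $\H^1_{\cF_\Lambda}(K,\bT)/\Lambda\kappa_\infty$ coincides with the image characteristic ideal of $\loc_{\bar v}(\kappa_\infty)$, and by local duality also with that of $\loc_v(\kappa_\infty)$, which explains the square appearing in (i).

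The final ingredient is the explicit reciprocity law: Perrin-Riou's big logarithm gives an isomorphism $\H^1(K_{\bar v},\Fil^+\bT)\otimes_\Lambda\Lambda^\ur\cong\Lambda^\ur$ sending $\loc_{\bar v}(\kappa_\infty)$ to $\cL_E$ up to a unit. Combining this with the duality identity of the previous paragraph converts the divisibility
$\Char_\Lambda(\cX_\tors)\supset\Char_\Lambda(\H^1_{\cF_\Lambda}(K,\bT)/\Lambda\kappa_\infty)^2$
of (i) into the divisibility $\Char_\Lambda(\fX_E)\Lambda^\ur\supset(\cL_E)$ of (ii), and symmetrically for the opposite divisibility. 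The main technical obstacle I anticipate is the careful bookkeeping through Poitou--Tate: verifying that the two local contributions at $v$ and $\bar v$ really pair symmetrically under local duality, so that the square on the Heegner side lines up exactly with the single factor $\cL_E$ on the Greenberg side with no spurious error terms. The hypothesis $E(K)[p]=0$ is what makes this bookkeeping clean by killing the $\H^0$-torsion that would otherwise intervene.
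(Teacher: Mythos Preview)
Your approach is essentially the one the paper invokes: it does not give its own proof but cites \cite[Proposition~4.2.1]{CGLS} and \cite[Appendix~A]{Cas17}, where the argument is exactly a Poitou--Tate comparison of $\cF_\Lambda$ and $\cF_\Gr$ through an intermediate structure, combined with the explicit reciprocity law for the Perrin-Riou big logarithm. Two small points to correct: in the paper's conventions (see the proposition immediately following this one) the regulator map is defined at $v$, not at $\bar v$, and it is only injective with \emph{finite} cokernel rather than an isomorphism, though this does not affect characteristic ideals.
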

	
	Here the $\kappa_\infty$ is a class appearing in the equivalence and is closely related to $\kappa_1$ (see~\cite[Remark 4.1.3]{CGLS}). The proof of the equivalence between the two Iwasawa Main Conjectures is carefully explained in~\cite[Appendix A]{Cas17}, where the key is the existence of a Perrin-Riou regulator map (or a `big logarithm map') relating the Heegner point to the BDP $p$-adic $L$-function.\begin{proposition}[Theorem A.1 in~\cite{Cas17}]\label[proposition]{HgtoBDP}
		Under~\cref{assumption}, if $f$ is $p$-ordinary, then there exists an injective $\Lambda^{\ac}$-linear map\[
		\cL_+:\H^1(K_v,\Fil^+\bT)_{\bZ_p^\nr}\to \Lambda^\nr
		\]
		with finite cokernel such that\[
		\cL_+(\res_v(\bz_f))=-\cL_E\cdot \sigma_{-1,v}
		\]where $\sigma_{-1,v}\in\Gamma$ has order two.
	\end{proposition}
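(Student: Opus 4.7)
The plan is to proceed in two stages: construct the big logarithm $\cL_+$ via Perrin-Riou's formalism adapted to the anticyclotomic line, and then evaluate it at $\res_v(\bz_f)$ by invoking the Bertolini--Darmon--Prasanna explicit reciprocity law, whose output matches the interpolation property defining $\cL_E$.

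For the construction, note that since $f$ is $p$-ordinary, $\Fil^+_v(T_pE)$ is a rank-one crystalline $G_{K_v}$-module of Hodge--Tate weight $1$, and twisting by the tautological character $\Psi\colon G_K\twoheadrightarrow\Gamma\hookrightarrow\Lambda^\times$ produces the rank-one $\Lambda[G_{K_v}]$-module $\Fil^+\bT$. Applying Perrin-Riou's regulator construction in the anticyclotomic form (as developed by Loeffler--Zerbes and Ochiai for this setting) yields a $\Lambda^\nr$-linear map
\[\cL_+\colon\H^1(K_v,\Fil^+\bT)_{\bZ_p^\nr}\to\Lambda^\nr\]
interpolating Bloch--Kato dual exponentials at finite-order characters of $\Gamma$ and Bloch--Kato logarithms at anticyclotomic Hecke characters of infinite order of the appropriate infinity type. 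Injectivity and finiteness of the cokernel are then standard consequences of the non-vanishing of the unit-root Frobenius eigenvalue $\alpha_v(f)$, which forces the interpolating Euler factors to be units on a Zariski-dense subset of $\Spec\Lambda^\nr$.

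For the evaluation at $\res_v(\bz_f)$, I would use that $\bz_f$ is the inverse limit of Kummer images of Heegner points of $p$-power conductor along the anticyclotomic tower. Specializing $\cL_+(\res_v(\bz_f))$ at an anticyclotomic Hecke character $\chi$ of infinite order of the type interpolated by $\cL_E$, the Coleman-integration computation of Bertolini--Darmon--Prasanna expresses this specialization explicitly in terms of the central value $L(f/K,\chi,1)$ together with prescribed CM periods and local factors; this matches the value of $-\cL_E\cdot\sigma_{-1,v}$ at $\chi$, the overall sign and the twist by $\sigma_{-1,v}$ being forced by the complex-conjugation symmetry and the asymmetric choice of $v$ inside $\{v,\bar v\}$. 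Zariski density of such $\chi$ in $\Spec\Lambda^\nr$ then upgrades the pointwise identities to the claimed equality of $\Lambda^\nr$-adic elements.

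The hard part will be the explicit reciprocity calculation itself: tracking CM versus complex periods, normalizations of the Shimura-curve parametrization, the precise Euler factors at $v$ and $\bar v$, and the interaction with $\Psi$ is the delicate bookkeeping that both establishes the displayed formula and produces the exact factor $-\sigma_{-1,v}$. In practice, since all of this has been carried out in this precise $\Lambda^\nr$-adic form, the cleanest route is to appeal directly to Theorem~A.1 of~\cite{Cas17}, whose hypotheses reduce to our~\cref{assumption} together with the $p$-ordinarity of $f$.
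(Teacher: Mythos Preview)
Your proposal is correct and aligns with the paper's treatment: the paper does not supply a proof at all but simply records this as a citation of Theorem~A.1 in~\cite{Cas17}, and your final paragraph lands on exactly that citation. The sketch you give of the Perrin-Riou regulator construction and the BDP explicit reciprocity computation is a faithful outline of what that cited theorem does, so nothing is missing or wrong; it is just more expository detail than the paper itself provides.
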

	
	Combining~\cref{Hg1div} and~\cref{equiv}, we also get the following one-side divisibility of the Greenberg's Main Conjecture.\begin{corollary}
		Assume that $p=v\ol v$ splits in $K$ and that $E(K)[p]=0$. Then Both $\H^1_{\cF_{\Gr}}(K,\bT)$ and $\fX_E=\H^1_{\cF_\Gr}(K,M_E)^\vee$ are $\Lambda$-torsion, and the divisibility\[
		\Char_\Lambda(\fX_E)\Lambda^\ur\supset (\cL_E)
		\] holds in $\Lambda^\ur$.
	\end{corollary}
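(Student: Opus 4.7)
The corollary is essentially the combination of the two preceding results, so my plan is purely to chain them together, verifying that their hypotheses are compatible.

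First, I invoke \cref{Hg1div}, which under the hypothesis $E(K)[p]=0$ provides that $\H^1_{\cF_\Lambda}(K,\bT)$ has $\Lambda$-rank one and that there is a finitely generated torsion $\Lambda$-module $M$ with $\cX \sim \Lambda \oplus M \oplus M$ and $\Char_\Lambda(M) \mid \Char_\Lambda(\H^1_{\cF_\Lambda}(K,\bT)/\Lambda\kappa_1)$. Translating this into the statement in~\cref{equiv}(i) is immediate once we note that $\cX_\tors \sim M \oplus M$ (the free rank-one part $\Lambda$ being removed), so squaring the divisibility for $M$ gives
\[
\Char_\Lambda(\cX_\tors) \supset \Char_\Lambda(\H^1_{\cF_\Lambda}(K,\bT)/\Lambda\kappa_\infty)^2,
\]
where $\kappa_\infty$ matches $\kappa_1$ up to the identification indicated in \cite[Remark 4.1.3]{CGLS} (this may introduce a harmless $p$-power factor, but does not affect divisibility up to explicit powers which are absorbed on both sides of~\cref{equiv}).

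Next, I apply~\cref{equiv}, whose hypotheses are precisely $p=v\ol v$ split in $K$ and $E(K)[p]=0$ — exactly what the corollary assumes. The equivalence (in the direction (i)$\Rightarrow$(ii)) then transfers the $\Lambda$-rank one statement for $\H^1_{\cF_\Lambda}(K,\bT)$ and $\cX$ into the $\Lambda$-torsion statement for $\H^1_{\cF_\Gr}(K,\bT)$ and $\fX_E$, and turns the above quadratic divisibility into the divisibility
\[
\Char_\Lambda(\fX_E)\Lambda^\ur \supset (\cL_E)
\]
in $\Lambda^\ur$, as desired.

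There is no genuine obstacle here: the content of the corollary lies entirely in \cref{Hg1div} (whose proof uses the Kolyvagin system argument and the Heegner point Kolyvagin system) and in \cref{equiv} (whose proof rests on the explicit reciprocity law of~\cref{HgtoBDP}). The only thing to check is the bookkeeping between $\kappa_1$ and $\kappa_\infty$, but this is standard and already addressed in the cited remark. Thus the proof is a one-line deduction: combine \cref{Hg1div} with \cref{equiv}.
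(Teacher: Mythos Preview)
Your proposal is correct and matches the paper's own approach exactly: the paper introduces this corollary with the sentence ``Combining~\cref{Hg1div} and~\cref{equiv}, we also get the following one-side divisibility of the Greenberg's Main Conjecture'' and gives no further proof. Your additional remark about the bookkeeping between $\kappa_1$ and $\kappa_\infty$ is a reasonable point of care, but as you note it is already handled by the cited remark and does not affect the argument.
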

	
	The insight in~\cite{CGLS} is that, in the residually reducible case, the residual representation $E[p]$ itself, or rather, the two characters $\phi,\psi$ appearing in the short exact sequence\[
	0\to\F(\phi)\to E[p]\to \F(\psi)\to 0,
	\]should already encode enough arithmetic information of $E$ in terms of Iwasawa invariants. More precisely, they showed that the Iwasawa invariants of $E$ are the sum of those of the two characters $\phi$ and $\psi$, i.e.\begin{itemize}
		\item $\lambda(\fX_E)=\lambda(\fX_\phi)+\lambda(\fX_\psi)+\lambda(\cP_{\phi,\psi,E}),\  \mu(\fX_E)=\mu(\fX_\phi)+\mu(\fX_\psi)=0,$
		\item $\lambda(\cL_E)=\lambda(\cL_\phi)+\lambda(\cL_\psi)+\lambda(\cP_{\phi,\psi,E}),\  \mu(\cL_E)=\mu(\cL_\phi)+\mu(\cL_\psi)=0$.
	\end{itemize}
	Here $\fX_\theta$ are defined similarly as $\fX_f$ by replacing $M_E$ with $M_\theta\coloneq \bZ_p(\theta)\otimes \Lambda^\vee$ where $\theta\colon G_K\to\bZ_p^\times$ is the (Teichmüller lift) of $\phi$ or $\psi$, and $\cL_\theta$ is the associated Katz $p$-adic $L$-function. $\cP_{\phi,\psi,E}$ is a certain factor that cancels on both sides. In~\cite{CGLS} one needs to put some restrictions on the characters, excluding the trivial character in particular. Such comparisons were extended to arbitrary characters in~\cite{KY24}. When one of $\phi$ and $\psi$ is the trivial character (note that one has $\phi\psi=\omega$ the mod $p$ cyclotomic character), the first equation becomes\begin{itemize}
		\item $\lambda(\fX_E)+1=\lambda(\fX_\phi)+\lambda(\fX_\psi)+\lambda(\cP_{\phi,\psi,E})$.
	\end{itemize}
	
	In fact, all the $\mu$-invariants are vanishing in their (and also our) settings by results of~\cite{Hida2010}, which plays a crucial role in their arguments. It is this fact that allows them to interpret the $\lambda$-invariants of $E$ and the characters $\phi$ and $\psi$ as the dimensions of some finite Selmer groups, which makes a direct comparison possible. 
	
	The final ingredient for the proof of the Iwasawa Main Conjectures for $E$ is the following consequence of the Iwasawa Main Conjectures for the characters. It is discussed in detail in~\cite{KY24}.
	\begin{lemma}[\cite{CW78} and~\cite{Rubin1991}. See also~\cite{deShalit}.]
		Let $\theta\colon G_K\to \bZ_p^\times$ be a finite order character. \\If $\theta\ne\mathbf{1}$, then \begin{itemize}
			\item $\mu(\fX_\theta)=\mu(\cL_\theta),$
			\item $\lambda(\fX_\theta)=\lambda(\cL_\theta).$
		\end{itemize}
		If $\theta=\mathbf{1}$, then the equality between $\mu$-invariants still hold, and\begin{itemize}
			\item $\lambda(\fX_\theta)=\lambda(\cL_\theta)+1.$
		\end{itemize}
	\end{lemma}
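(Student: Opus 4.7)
The plan is to deduce both cases from the anticyclotomic specialization of Rubin's two-variable Iwasawa Main Conjecture for the imaginary quadratic field $K$, as proved in \cite{Rubin1991} (with the construction of elliptic units going back to \cite{CW78}, and a unified exposition in \cite{deShalit}). Rubin's theorem gives an equality of characteristic ideals $\Char_\Lambda(\fX_\theta) = (\cL_\theta)$ in $\Lambda$ after specializing along $\theta$. Since the $\mu$- and $\lambda$-invariants of a torsion $\Lambda$-module are determined by its characteristic ideal, both equalities in the case $\theta \ne \mathbf{1}$ follow immediately.

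For $\theta = \mathbf{1}$, the extra $+1$ in the $\lambda$-invariant comes from a well-known trivial zero phenomenon: the trivial character lies outside the direct interpolation range of the Katz $p$-adic $L$-function, and the comparison between $\fX_{\mathbf{1}}$ and the corresponding Iwasawa module on Rubin's side picks up an extra pseudo-null correction that, at the anticyclotomic level, contributes a factor isomorphic to $\Lambda/(\gamma-1)$. This factor has $\lambda = 1$ and $\mu = 0$, so it explains exactly the asserted shift in the $\lambda$-invariant while preserving the equality of $\mu$-invariants. The vanishing of $\mu(\cL_\theta)$ (hence of $\mu(\fX_\theta)$) in the relevant settings is independently supplied by Hida's results already cited.

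The main obstacle will be to carefully align normalizations across the sources and verify that the Selmer groups $\fX_\theta$ defined here (in the Greenberg style, with the strict condition at $\ol v$ and the relaxed condition at $v$, possibly replaced by the unramified variant at $\ol v$ as discussed earlier) match, after twisting by $\theta$, the Iwasawa modules for which Rubin's main conjecture is formulated. Once this dictionary is in place, the non-trivial and trivial cases follow uniformly from Rubin's equality plus the trivial-zero bookkeeping described above.
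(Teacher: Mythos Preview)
The paper does not give a proof of this lemma at all: it is stated as a citation result attributed to \cite{CW78}, \cite{Rubin1991}, and \cite{deShalit}, with the remark just before it that ``it is discussed in detail in~\cite{KY24}.'' Your sketch---specialize Rubin's two-variable main conjecture along $\theta$ to obtain $\Char_\Lambda(\fX_\theta)=(\cL_\theta)$ and read off the invariants, with an additional correction when $\theta=\mathbf{1}$---is exactly the route those references (and the fuller discussion in~\cite{KY24}) take, so you are aligned with the paper's intent.

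One terminological slip worth fixing: in your explanation of the $\theta=\mathbf{1}$ case you call the extra factor a ``pseudo-null correction'' and then say it is isomorphic to $\Lambda/(\gamma-1)$. Over the one-variable $\Lambda$, pseudo-null means finite, whereas $\Lambda/(\gamma-1)\cong\bZ_p$ has $\lambda=1$ and is certainly not pseudo-null. The correct picture is that the comparison between the Greenberg-type Selmer module $\fX_{\mathbf{1}}$ and Rubin's class-group-side module differs by a genuine $\Lambda$-torsion piece of characteristic ideal $(\gamma-1)$ (arising from the local cohomology at the split prime above $p$ when the twist is trivial), not a pseudo-null one. This is what accounts for the $+1$ in the $\lambda$-invariant, and your vague invocation of a ``trivial zero phenomenon'' should be replaced by this concrete Selmer-group comparison; that is precisely the content worked out in~\cite{KY24} to which the paper defers.
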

	
	Thus in any case, one gets the equalities
	\begin{center}
		$\lambda(\fX_E)=\lambda(\cL_E)$ and $\mu(\fX_E)=\mu(\cL_E).$
	\end{center}
	
	We then obtain the Main theorem from~\cite{CGLS}, \cite{CGS} and~\cite{KY24}.
	\begin{theorem}[Iwasawa Main Conjectures for good ordinary reduction. See Theorem 3.0.7, Remark 3.0.8 in~\cite{KY24}]\label[theorem]{IMC}
		Assume that $p=v\ol v$ splits in $K$. Then the following statements hold:\begin{enumerate}
			\item[(HPMC)] Both $\H^1_{\cF_\Lambda}(K,\bT)$ and $\cX=\H^1_{\cF_\Lambda}(K,M_f)^\vee$ have $\Lambda$-rank one, and the equality \begin{equation*}
				\Char_\Lambda(\cX_{\tors})= \Char_\Lambda(\H^1_{\cF_\Lambda}(K,\bT)/\Lambda\kappa_{\infty})^2
			\end{equation*}
			holds in $\Lambda_{ac}$.
			\item[(GrMC)] Both $\H^1_{\cF_{\Gr}}(K,\bT)$ and $\fX_f=\H^1_{\cF_{\Gr}}(K,M_f)^\vee$ are $\Lambda$-torsion, and the equality\begin{equation*}
				\Char_\Lambda(\fX_f)\Lambda^{\nr}=(\cL_f)
			\end{equation*}
			holds in $\Lambda^{\nr}$.
		\end{enumerate}
	\end{theorem}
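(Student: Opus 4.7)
The strategy is the familiar two-variable bootstrap from~\cite{CGLS}: combine one divisibility from a Kolyvagin system argument with the Iwasawa Main Conjecture for the two characters $\phi,\psi$ appearing in the semisimplification of $E[p]$, then compare $\lambda$- and $\mu$-invariants to promote the divisibility to an equality. The equivalence between (HPMC) and (GrMC) is already supplied by~\cref{equiv}, so it suffices to establish either one of the two in full; the analytic side (GrMC) is where the character comparison takes place, so I will pass to GrMC.

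\textbf{Step 1: One divisibility.} Starting from the Heegner point Kolyvagin system of \cite[Theorem 4.1.1]{CGLS}, together with the Cartesian/self-dual framework of $(\bT,\cF_\Lambda,\sL_E)$, invoke \cref{Hg1div} to obtain that $\H^1_{\cF_\Lambda}(K,\bT)$ has $\Lambda$-rank one and that
\[
\Char_\Lambda(\cX_\tors)\supset \Char_\Lambda(\H^1_{\cF_\Lambda}(K,\bT)/\Lambda\kappa_\infty)^2
\]
up to the minor caveats already flagged in the remark following \cref{Hg1div} (passage to an isogenous curve via Ribet's lemma if $E(K)[p]\ne 0$, and the possible $p$-power factor in $\kappa_1$). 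Through \cref{equiv}, this transports to the one-sided GrMC divisibility $\Char_\Lambda(\fX_f)\Lambda^\nr\supset(\cL_f)$, using the Perrin-Riou regulator map $\cL_+$ of \cref{HgtoBDP} as the bridge between $\bz_f$ and $\cL_f$.

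\textbf{Step 2: Reducing to characters.} The residual representation fits into the short exact sequence~\eqref{Eis}, giving characters $\phi,\psi\colon G_\Q\to \F_p^\times$ with $\phi\psi=\omega$. Let $\fX_\phi,\fX_\psi$ be the Greenberg Selmer duals attached to the Teichmüller lifts, and let $\cL_\phi,\cL_\psi$ be the associated Katz $p$-adic $L$-functions. Apply the comparison recalled before the lemma above (from \cite{CGLS,KY24}): up to a common factor $\cP_{\phi,\psi,E}$ that cancels,
\[
\lambda(\fX_E)\;\text{(or }\lambda(\fX_E)+1\text{ in the trivial character case)}=\lambda(\fX_\phi)+\lambda(\fX_\psi)+\lambda(\cP_{\phi,\psi,E}),
\]
and analogously for $\cL_E$, while all $\mu$-invariants vanish by \cite{Hida2010}.

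\textbf{Step 3: IMC for characters, and equality of invariants.} Applying the lemma from \cite{CW78,Rubin1991} (with \cite{deShalit} for the trivial character) gives $\lambda(\fX_\theta)=\lambda(\cL_\theta)$ (with the $+1$ correction precisely when $\theta=\mathbf{1}$) and $\mu(\fX_\theta)=\mu(\cL_\theta)$ for $\theta\in\{\phi,\psi\}$. The $+1$ corrections on the algebraic and analytic sides occur for exactly the same $\theta$ and therefore cancel, yielding $\lambda(\fX_E)=\lambda(\cL_E)$ and $\mu(\fX_E)=\mu(\cL_E)=0$. Combined with the one-sided divisibility from Step~1, the principle recorded in the example just after the definition of characteristic ideals upgrades divisibility to equality in $\Lambda^\nr$, which is (GrMC). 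Re-applying \cref{equiv} in the opposite direction yields (HPMC).

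\textbf{Main obstacle.} The serious technical work is all contained in the cited ingredients: the Kolyvagin system machinery behind \cref{Hgstr,Hg1div}, the explicit reciprocity law \cref{HgtoBDP}, the vanishing of $\mu$-invariants of \cite{Hida2010}, and the character-wise IMC. The delicate point in the assembly is bookkeeping the trivial-character correction: if $\phi=\mathbf{1}$ (or $\psi=\mathbf{1}$), the $+1$ shifts must be matched on both the algebraic and analytic sides to keep the cancellation clean, and one must also verify that the auxiliary factor $\cP_{\phi,\psi,E}$ encoding Euler factors and Tamagawa-type contributions appears symmetrically in the algebraic and analytic $\lambda$-invariants. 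Granting this matching -- which is the content of the extension in \cite{KY24} to arbitrary characters -- the conclusion follows purely formally from divisibility plus equality of invariants.
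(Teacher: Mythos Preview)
Your proposal is correct and follows essentially the same route as the paper: the theorem is presented there as a consequence of the discussion in \S\ref{Iwasawago}, which assembles exactly the ingredients you list (one divisibility from \cref{Hg1div}, the equivalence \cref{equiv} via \cref{HgtoBDP}, the character decomposition of $\lambda$- and $\mu$-invariants, Hida's $\mu=0$, and the IMC for characters from~\cite{CW78,Rubin1991}) to force equality from divisibility. Your bookkeeping of the trivial-character $+1$ shift is slightly imprecise in wording---the two $+1$'s are the shift in $\lambda(\fX_E)$ versus the character sum and the shift in $\lambda(\fX_\theta)$ versus $\lambda(\cL_\theta)$ for $\theta=\mathbf{1}$, not a symmetric ``algebraic/analytic'' pair---but the cancellation you describe is exactly what the paper records just before the theorem statement.
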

	
	\subsection{A control theorem}
	Another ingredient we need in proving the $p$-converse theorem is an anticyclotomic control theorem. In the good ordinary case, it is a combination of~\cite[section 3]{JSW2017} (which studies the \textit{anticyclotomic} Selmer group in the anticyclotomic setting) and~\cite[section~3]{Greenberg1999} (which studies the ordinary Selmer group in the cyclotomic setting).
	
	\begin{theorem}\label{controlgo}
		Let $E/\bQ$ be an elliptic curve which has good, ordinary reduction at $p$, then the map\[
		\Sel_{p^\infty}(E/K) \to \Sel_{p^\infty}(E/K_\infty)^\Gamma
		\]
		has finite kernel and cokernel.
	\end{theorem}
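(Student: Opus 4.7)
The plan is to execute a Mazur--Greenberg-style control argument in the anticyclotomic setting, combining the approach of~\cite{JSW2017} with Greenberg's local computation at $p$~\cite[section~3]{Greenberg1999}. For $F \in \{K, K_\infty\}$ I would start from the defining exact sequence
\[
0 \to \Sel_{p^\infty}(E/F) \to \H^1(G_{F,S}, E[p^\infty]) \to \prod_{w \in S} \frac{\H^1(F_w, E[p^\infty])}{\H^1_f(F_w, E[p^\infty])},
\]
where $S$ collects the places of $F$ dividing $p N_E \infty$ and $\H^1_f$ denotes the Kummer local condition. Taking $\Gamma$-invariants of the $F = K_\infty$ sequence, comparing with the $F = K$ sequence, and applying the snake lemma reduces the theorem to bounding (i) the kernel of the global restriction map on $\H^1(G_{?,S}, E[p^\infty])$ and (ii) the kernels of the induced maps on local singular quotients at each $w \in S$.

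For (i), the inflation-restriction sequence identifies the kernel with $\H^1(\Gamma, E(K_\infty)[p^\infty])$, which is finite because $\Gamma \cong \bZ_p$ acts on the finite group $E(K_\infty)[p^\infty]$ (one can invoke Imai's theorem, or simply observe that the $p^\infty$-torsion cannot grow arbitrarily in an anticyclotomic $\bZ_p$-tower). For (ii) at places $w \nmid p$, the local singular quotient $\H^1(K_w, E[p^\infty])/\H^1_f(K_w, E[p^\infty])$ is itself finite (trivial at good primes, of order bounded by a Tamagawa factor at the bad primes in $S$), so its contribution to the cokernel is automatically finite and supported on finitely many $w$.

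The substantive step is at the primes $v, \bar v$ above $p$, where I would use the good ordinary filtration $0 \to \Fil^+_v E[p^\infty] \to E[p^\infty] \to E[p^\infty]/\Fil^+_v \to 0$ of $G_{K_v}$-modules. The Kummer local condition at $v$ agrees up to a finite group with the Greenberg ordinary condition $\im(\H^1(K_v, \Fil^+_v E[p^\infty]) \to \H^1(K_v, E[p^\infty]))$, which allows me to port over Greenberg's local calculation: that restriction on the ordinary quotient has finite kernel, using that $K_{\infty,v}/K_v$ is a deeply ramified $\bZ_p$-extension and that Frobenius acts nontrivially on the unramified quotient $E[p^\infty]/\Fil^+_v$ (precisely ordinarity). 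The identical argument applies at $\bar v$ since $p = v\bar v$ splits in $K$. The main obstacle is bookkeeping the various finite discrepancies — between the Kummer and Greenberg conditions at $v, \bar v$, and the residual $\Gamma$-cohomology of $E(K_\infty)[p^\infty]$ — but all of these are controlled by constants independent of the layer, so both kernel and cokernel in the statement remain finite.
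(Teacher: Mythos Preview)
Your proposal is correct and follows essentially the same Mazur--Greenberg control strategy as the paper: snake lemma on the defining diagram, inflation--restriction for the global map, and Greenberg's ordinary computation~\cite[Lemma~3.4]{Greenberg1999} at the primes above $p$. The one point of departure is at the finitely many bad primes $w \nmid p$: you observe directly that the local singular quotient $\H^1(K_w,E)[p^\infty]$ is finite (so $\ker(r_w)$ is trivially finite), whereas the paper notes that Greenberg's cyclotomic argument does not transpose verbatim---since primes can split completely in the anticyclotomic tower---and instead invokes~\cite[Theorem~3.3.7]{JSW2017}; your shortcut is valid and sidesteps that decomposition issue entirely, though the bound is $\lvert E(K_w)[p^\infty]\rvert$ rather than literally a Tamagawa factor.
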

	\begin{proof}
		As in~\cite[section 3]{Greenberg1999}, for $M$ an algebraic extension of $K$, let $\cP_E(M)\defeq \prod\frac{\H^1(M_\eta,E[p^\infty])}{\im(\kappa_\eta)}$ denote the codomain of the global-to-local map defining the Selmer group $\Sel_{p^\infty}(E/M)$, where $\eta$ runs through all places of $M$, and let $\cG_E(M)\defeq\im(\H^1(M,E[p^\infty])\to\cP_E(M))$. Then there is a commutative diagram with exact rows\\
		\begin{tikzcd}
			0\ar[r] &\Sel_{p^\infty}(E/K)\ar[r]\ar[d,"s"]& \H^1(K,E[p^\infty]) \ar[r,"\loc_S"]\ar[d,"h"]& \mathcal{G}_E(K)\ar[r]\ar[d,"g"]&0 \\
			0\ar[r]& \Sel_{p^\infty}(E/K_\infty)^\Gamma \ar[r]& \H^1(K_\infty,E[p^\infty])^\Gamma  \ar[r]&\mathcal{G}_E(K_\infty)^\Gamma.&
		\end{tikzcd}\\
		From snake lemma, it suffices to study $\ker(h),\coker(h)$ and $\ker(g)$, the first two of which are easily seen to be finite. To show the finiteness of $\ker(g)$, it suffices to consider the kernel of $r\colon\cP_E(K)\to\cP_E(K_\infty)^\Gamma$, which is a product of the kernels of \[r_w\colon\frac{\H^1(K_w,E[p^\infty])}{\im(\kappa_w)}\to \frac{\H^1(K{_\infty,\eta},E[p^\infty])}{\im(\kappa_\eta)}\] for each place $w$ of $K$, where $\eta$ is any place of $K_\infty$ above $w$.
		
		In the anticyclotomic setting, one cannot use the arguments in~\cite{Greenberg1999} directly for primes not above $p$, since not every prime is finitely decomposed in $K_\infty$. Since the local conditions away from $p$ of the ordinary Selmer groups and those of the anticyclotomic Selmer groups agree (as they are both defined as the unramified local conditions), the arguments in~\cite[Theorem~3.3.7, Cases~1(a)(b), 2(a)(b)]{JSW2017} show that all such $\ker(r_w)$ are finite.
		
		It remains to study $\ker(r_v)$ and $\ker(r_{\ol v})$. The arguments in~\cite[Lemma 3.4]{Greenberg1999} still apply since the assumptions in Theorem 2.4 from \textit{op.\ cit.}, namely that $\Gal(K_{\infty,\eta}/K_w)$ contains an infinite pro-$p$ subgroup and that the inertia subgroup of $\Gal(K_{\infty,\eta}/K_w)$ is of finite index are still satisfied for $w$ equal to both $v$ and $\ol v$ for the anticyclotomic $\bZ_p$-extension $K_\infty$.
	\end{proof}

	\subsection{Proof of the $p$-converse theorems}\label{pCgo}
	We now explain how our $p$-converse theorems would follow from the Heegner Point Main Conjecture, at least in the good ordinary case (see~\cite[Theorem 5.2.1]{CGLS} for a reference). In fact, we only need one divisibility. We first recall a theorem of Kolyvagin, which works for any prime.
	
	\begin{theorem}[Kolyvagin]\label{Kolythm}
		Let $E/\bQ$ be an elliptic curve and $p$ be a prime. Let $r\in\{0,1\}$. Then\[
		\ord_{s=1}L(E/\bQ,s)=r\ \Rightarrow\  \corank_{\bZ_p}\Sel_{p^\infty}(E/\bQ)=r.
		\]
	\end{theorem}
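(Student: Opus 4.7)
The plan is to reduce the statement to the anticyclotomic setting over an auxiliary imaginary quadratic field $K$, where Gross--Zagier and Kolyvagin's Euler system apply, and then descend back to $\Q$ using the $\Gal(K/\Q)$-action. First, I would choose $K/\Q$ imaginary quadratic satisfying the Heegner hypothesis for $E$ (every $\ell \mid N_E$ splits in $K$, with $K$ not equal to a problematic small discriminant) such that the quadratic twist $E^K$ has opposite analytic rank to $E$: concretely, if $r=0$ then $L(E^K,s)$ vanishes to order $1$ at $s=1$ (existence by Bump--Friedberg--Hoffstein / Murty--Murty), and if $r=1$ then $L(E^K,1)\neq 0$ (existence by Waldspurger / Friedberg--Hoffstein). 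The factorization $L(E/K,s)=L(E,s)\cdot L(E^K,s)$ then guarantees $\ord_{s=1}L(E/K,s)=1$ in both cases.

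Next, by the Gross--Zagier formula the Heegner point $P_K\in E(K)$ attached to this choice of $K$ has infinite order. Kolyvagin's Euler system, applied to a non-torsion Heegner point under the Heegner hypothesis, then yields $\rk_\Z E(K)=1$ and $\#\Sha(E/K)<\infty$; in particular, $\corank_{\bZ_p}\Sel_{p^\infty}(E/K)=1$ for every prime $p$.

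For the descent, decompose the $\bZ_p[\Gal(K/\Q)]$-module $\Sel_{p^\infty}(E/K)$ into its $\pm 1$-eigenspaces for complex conjugation $\tau$. For $p$ odd, inflation--restriction identifies these eigenspaces with $\Sel_{p^\infty}(E/\Q)$ and $\Sel_{p^\infty}(E^K/\Q)$ up to finite groups, giving
\[
\corank_{\bZ_p}\Sel_{p^\infty}(E/K)=\corank_{\bZ_p}\Sel_{p^\infty}(E/\Q)+\corank_{\bZ_p}\Sel_{p^\infty}(E^K/\Q).
\]
(For $p=2$ this requires a little extra care but still holds.) A sign computation shows that under the Heegner hypothesis $P_K$ lies in the minus eigenspace when $r=0$ and in the plus eigenspace when $r=1$ (modulo torsion), so the corresponding summand has corank at least $1$; combined with the total corank $1$, the other summand has corank $0$.

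Reading off cases: when $r=0$, the above forces $\corank_{\bZ_p}\Sel_{p^\infty}(E/\Q)=0$; when $r=1$, it forces $\corank_{\bZ_p}\Sel_{p^\infty}(E/\Q)=1$. The main subtlety is the simultaneous control over the Heegner hypothesis and the analytic behavior of $L(E^K,s)$ in Step~1; the existence of such a $K$ is known but draws on deep non-vanishing results. (Alternatively, for the case $r=0$ one can bypass the auxiliary field entirely and invoke Kato's Euler system, which directly gives finiteness of $\Sel_{p^\infty}(E/\Q)$ whenever $L(E,1)\neq 0$.)
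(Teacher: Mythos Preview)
The paper does not actually prove this theorem: it is stated as a classical result attributed to Kolyvagin (building on Gross--Zagier) and is used as a black box in the proof of the $p$-converse theorem that follows. So there is no ``paper's own proof'' to compare against.

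Your sketch is a correct outline of the standard Gross--Zagier--Kolyvagin argument. The choice of auxiliary $K$ via non-vanishing results, the use of Gross--Zagier to ensure $P_K$ is non-torsion, Kolyvagin's bound giving $\rk_\Z E(K)=1$ and $\#\Sha(E/K)<\infty$, and the eigenspace decomposition to descend to $\Q$ are all correct. Your sign claim for $P_K$ is right: one has $P_K^\tau \equiv -w(E/\Q)\cdot P_K$ modulo torsion, so $P_K$ lands in the $(-1)^{r+1}$-eigenspace, as you say. Two small caveats worth flagging if you ever write this out in full: (i) Kolyvagin's original argument imposed hypotheses on an auxiliary prime (e.g.\ a big-image condition) which were only later removed, so ``Kolyvagin's Euler system'' here is really shorthand for the completed picture; (ii) for $p=2$ the eigenspace decomposition of $\Sel_{p^\infty}(E/K)$ is not a direct sum on the nose, but the corank additivity still holds via inflation--restriction, as you note. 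Your parenthetical about Kato for $r=0$ is also a legitimate shortcut.
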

	Here $\Sel_{p^\infty}(E/\bQ)$ is the usual $p^\infty$ Selmer group defined similarly as in~\cref{method} with $K$ replaced by $\bQ$ (that definition makes sense for any number field). 
	
	The proof of the $p$-converse theorem relies on a choice of an auxiliary imaginary quadratic field $K$ over which $E$ does not have CM and the anticyclotomic Iwasawa Main Conjectures holds. 
	\begin{proof}[Proof of the $p$-converse in good ordinary case]
		Again let $r\in\{0,1\}$.
		
		When $\corank_{\bZ_p}\Sel_{p^\infty}(E/\bQ)=r$, we could choose a $K$ which satisfies~\cref{assumption} such that $\ord_{s=1}L(E^K/\bQ,s)=1-r$, where $E^K$ is the twist of $E$ by $K$. We also assume that $E$ does not have CM by $K$. Then by~\cref{Kolythm}, $\corank_{\bZ_p}\Sel_{p^\infty}(E^K/\bQ)=1-r$. It then follows that $\corank_{\bZ_p}\Sel_{p^\infty}(E/K)=1$. \cref{controlgo} then implies that $\Sel_{p^\infty}(E/K_\infty)^\Gamma$ also has $\bZ_p$-corank $1$, or equivalently, $X_\Gamma$ has $\bZ_p$-rank $1$. Indeed, from the discussion after~\cite[Theorem A]{How2004}, there is an pseudo-isomorphism from $\Sel_{p^\infty}(E/K_\infty)^\vee$ to $X$. On the other hand, from~\cref{Hgstr} one has \[\rk_{\bZ_p}X_\Gamma=\rk_{\bZ_p}\Lambda_\Gamma+2\corank_{\bZ_p}M_\Gamma=1+2\corank_{\bZ_p}M_\Gamma,\] so $M_\Gamma$ must be finite.
		
		This is equivalent to $T\nmid f_\Lambda(M)$, so by~\cref{IMC} (in fact, the divisibility `$\subset$' is enough), $T\nmid f_\Lambda(\H^1_{\cF_\Lambda}(K,\bT)/\Lambda\kappa_\infty)$, which in turn means $(\H^1_{\cF_\Lambda}(K,\bT)/\Lambda\kappa_\infty)_\Gamma$ is finite, so $\kappa_\infty$ is non-torsion. 
		
		There is an injection $\H^1_{\cF_\Lambda}(K,\bT)_\Gamma\hookrightarrow S_p(E/K)$ coming from taking first cohomology of the short exact sequence\[
		0\to \bT\xrightarrow{\cdot T}\bT\to T_pE\to 0
		.\]
		It then follows that $\kappa_\infty$ has non-torsion image in $S_p(E/K)$, but by construction the image is a nonzero multiple of $\cores_{K_1/K}(y_1)$ where $y_1$ is the classical Heegner point. Therefore by Gross--Zagier formula $\ord_{s=1}L(E/K,s)=1$. Since $\ord_{s=1}L(E/K,s)=\ord_{s=1}L(E/\bQ,s)+\ord_{s=1}L(E^K/\bQ,s)$, it follows that $\ord_{s=1}L(E/\bQ)=r$.
	\end{proof}
	
	\section{Iwasawa theory of elliptic curves at primes of potentially ordinary reduction}\label{potordIwasa}
	In this section we treat the case where $E/\bQ$ is an elliptic curve and $p$ is a prime of potentially good ordinary reduction for $E$. We let $L$ be a finite extension of $K$ where $E$ gains good reduction, and $K$ is an imaginary quadratic field satisfying~\cref{assumption} as before. Some of our arguments also allow potentially multiplicative reduction.
	 
	To obtain a $p$-converse theorem in the potentially ordinary case, one would naturally hope to argue as in the good ordinary case (see~\cref{pCgo}), namely, proving an Iwasawa Main Conjecture and appealing to a control theorem. However, several definitions no longer make sense in the bad reduction case, including $\Fil^+_w(T_pE)$ which was defined as the kernel of reduction of the Tate module at a place $w\mid p$ in $K$. In particular, one needs to find a reasonable substitute of the ordinary Selmer group.
	
	Note that one could on the other hand always define, regardless of the reduction type, the Bloch--Kato Selmer group $\H^1_\BK(F,V)$ with the following local conditions:\[
	\H^1_\BK(F_w,V)=\begin{cases}
		\H^1_\ur(F_w,V)&v\nmid p\infty,\\
		\ker(\H^1(F_w,V)\to \H^1(F_w,V\otimes_{\bQ_p}\bB_\cris))&w\mid p,\\
		0&w\mid\infty,
	\end{cases}
	\]where $\bB_\cris$ is Fontaine's ring of crystalline periods, $F$ is any number field and $w$ is a prime of $F$, and $V=T_pE\otimes\bQ_p$. One could also defined the Bloch--Kato Selmer groups for $T\defeq T_pE$ and $A\defeq V/T\cong E[p^\infty]$ via propagation in terms of the short exact sequence\[
	0\to T\to V\to A\to 0.
	\]

	The Bloch--Kato Selmer groups $\H^1_\BK(K,A)$ (resp.\ $\H^1_\BK(K,T)$) agree with the classical Selmer groups $\Sel_{p^\infty}(E/K)$ (resp.\ $S_p(E/K)$). When $E$ has ordinary reduction at $p$, they almost agree with the ordinary Selmer groups $\H^1_{\cF_\ord}(K,A)$ (resp.\ $\H^1_{\cF_\ord}(K,T)$) we defined in~\cref{Iwasawago}, which were necessary for our arguments. In fact, when $E$ has ordinary reduction at a place $w$ of $F$ above $p$, the local conditions $\H^1_\BK(F_w,V)$ and $\H^1_\ord(F_w,V)$ agree, where, in the multiplicative case (which is necessarily of weight $2$ by~\cite[Lemma 2.1.2]{Skinner}), the one dimensional subspace $\Fil^+(V)$ is defined so that $V/\Fil^+(V)$ is unramified. It is sometimes inconvenient to work with the Bloch--Kato Selmer group directly. We will therefore define some appropriate Selmer groups that turn out to be the same as the Bloch--Kato ones. Then we formulate and prove the Heegner Point Main Conjecture and a control theorem as in the good ordinary case. The $p$-converse theorem will follow as a consequence. First we need to study Galois representations attached to elliptic curves of potentially ordinary reduction.
	
	\subsection{Potentially ordinary reduction and twists}\label{twists}
	In this subsection, let $f$ be a newform of weight $2$ associated to an elliptic curve $E$. In general, one difficulty in studying Iwasawa theory for $f$ at bad primes lies in the fact that the Euler factor at $p$ is trivial, which makes it hard to obtain local information. For example, in~\cite{CastellaHsieh}, to study generalized Heegner cycles one needs to assume there is a $p$-adic unit root of the Hecke polynomial $T^2-a_p(f)T+p$, which doesn't exist if $a_p(f)=0$. In this subsection we recall the work in~\cite{NekovarSelmerComplexes} which says that if $E$ has potentially ordinary reduction, then $f$ comes from a twist of an ordinary newform $\tilde{f}$. This fact allows us to study the unit $a_p(\tilde{f})$ to get useful information about $f$. We will then use the theory developed in~\cite{JLZ21}, a generalization of~\cite{CastellaHsieh}, to study so-called \textit{Heegner pairs}.
	
	We begin with the following key observation.
	\begin{theorem}[Proposition 12.11.5\,(iv) in~\cite{NekovarSelmerComplexes}]
		Let $E$ be an elliptic curve defined over $\bQ$ and assume $E$ has potentially ordinary reduction at $p$. Let $f$ be the associated weight $2$ newform. Then there is a finite order character $\epsilon$ such that $f\otimes \epsilon^{-1}$ is ordinary at $p$.
	\end{theorem}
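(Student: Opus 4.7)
The plan is to analyze the $p$-adic Galois representation $V \defeq V_p(E) \cong V_p(f)$ on restriction to $G_{\bQ_p}$. Since $E$ has potentially good ordinary reduction, there is a finite Galois extension $L/\bQ_p$ over which $E$ acquires good ordinary reduction, and the connected-\'etale sequence of the $p$-divisible group $E[p^\infty]/\mathcal{O}_L$ yields a canonical $G_L$-stable filtration
\[
0 \to V^+ \to V|_{G_L} \to V^- \to 0,
\]
where $V^+$ is the Tate module of the formal group (with $I_L$ acting through the cyclotomic character $\chi_\cyc$) and $V^-$ is unramified, corresponding to the $p$-adic unit root of Frobenius.

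The first key step is to promote this to a $G_{\bQ_p}$-stable filtration. The point is that $V^+$ is intrinsic: it is the unique $G_L$-stable line on which inertia acts through the cyclotomic character (rather than through a character of finite order), equivalently it is characterized by the Hodge--Tate filtration on $V|_{G_{\bQ_p}}$, which is potentially crystalline with Hodge--Tate weights $\{0,1\}$. Any $\sigma \in G_{\bQ_p}$ must then preserve $V^+$. Consequently $I_{\bQ_p}$ acts on $V^-$ through a character $\epsilon_p$ that is trivial on $I_L$, hence of finite order, and by the relation $\det V = \chi_\cyc$, inertia acts on $V^+$ through $\chi_\cyc \cdot \epsilon_p^{-1}$.

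The second step is to realize $\epsilon_p$ as the local component at $p$ of a global Dirichlet character $\epsilon$: by Kronecker--Weber any finite-order character of $G_{\bQ_p}$ factors through $\Gal(\bQ_p(\mu_{p^n})/\bQ_p)$ for some $n$ and is hence the restriction of a Dirichlet character of $p$-power conductor. The twist $(V \otimes \epsilon^{-1})|_{G_{\bQ_p}}$ then has unramified quotient $V^- \otimes \epsilon_p^{-1}$, and since the twisting operation on modular forms is compatible with twisting of Galois representations under Eichler--Shimura, this is the Galois representation attached to $f \otimes \epsilon^{-1}$, which is therefore $p$-ordinary. The main obstacle is justifying the intrinsic characterization of $V^+$ rigorously; this can be done either by uniqueness of the connected part of the $p$-divisible group of the N\'eron model over $\mathcal{O}_L$, or more conceptually via $p$-adic Hodge theory, using that potentially crystalline representations with distinct Hodge--Tate weights $\{0,1\}$ admit a unique rank-one subrepresentation with inertial action $\chi_\cyc$ up to a finite-order twist.
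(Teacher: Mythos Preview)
The paper does not supply its own proof of this statement; it is quoted directly from Nekov\'a\v{r}'s book and used as a black box. Your argument is therefore not being compared to a proof in the paper, but stands on its own, and it is essentially the standard one: exploit the ordinary filtration over a finite extension $L/\bQ_p$, show it descends to $G_{\bQ_p}$ via the intrinsic Hodge--Tate characterization of $V^+$, read off the finite-order inertial character on $V^-$, and globalize. This is correct and is morally the argument behind Nekov\'a\v{r}'s result.

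Two small points are worth tightening. First, the statement concerns \emph{potentially ordinary} reduction, which in the paper's terminology includes the potentially multiplicative case; your write-up only treats potentially good ordinary (you invoke the connected--\'etale sequence of the $p$-divisible group over $\cO_L$, which presupposes good reduction). The potentially multiplicative case is of course easier---$E$ is then a quadratic twist of a Tate curve and the required $\epsilon$ is quadratic---but you should say so explicitly, or note that the Tate parametrization furnishes the same filtration and the Hodge--Tate argument goes through unchanged. Second, your Kronecker--Weber step as written (``any finite-order character of $G_{\bQ_p}$ factors through $\Gal(\bQ_p(\mu_{p^n})/\bQ_p)$'') is false for characters with nontrivial unramified part; what you actually need, and what your argument uses, is that the \emph{inertial} character $\epsilon_p|_{I_{\bQ_p}}$ factors through $(\bZ/p^n\bZ)^\times$ and hence lifts to a Dirichlet character of $p$-power conductor. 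The unramified part of the $G_{\bQ_p}$-action on $V^-$ is the unit-root Frobenius eigenvalue and need not be touched.
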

	
	Thus there exists a weight $2$ newform $\tilde{f}$ ordinary at $p$ such that $f=\tilde{f}\otimes\epsilon$, in the sense that $a_\ell(f)=a_\ell(\tilde{f})\epsilon(\ell)$ where $a_\ell(f)$ (resp.\ $a_\ell(\tilde{f})$) are the Fourier coefficients of $f$ (resp.\ $\tilde{f}$) for all $(\ell,p\cdot\cond(\epsilon))=1$.
	 
	Now there are $2$ situations:\begin{itemize}
		\item[Case I] $f$ is a potentially good ordinary modular form of weight $2$ for which $\tilde{f}$ has good reduction.
		\item[Case II] $f$ is a potentially multiplicative modular form of weight $2$, in which case $\tilde{f}$ necessarily has multiplicative reduction at $p$.
	\end{itemize}
	
	Since we assume $f$ has trivial nebentypus, $\tilde{f}$ is force to have nebentypus $\epsilon^{-2}$, i.e., $\tilde{f}\in S_2(\Gamma_0(N'),\epsilon^{-2})$, where $N'$ is the level of $\tilde{f}$. Since we assume $f$ has additive reduction at $p$, $\epsilon$ must be ramified at $p$, i.e. $p\mid\cond(\eps)$. In particular, if $\tilde{f}$ also has trivial nebentypus (e.g., when we are in case II), $\eps$ must be a quadratic character ramified at $p$, or equivalently, $\epsilon$ correspond to an imaginary quadratic field $k$ where $p$ does not split.
	
	We see that an elliptic curve $E$ with corresponding weight $2$ form $f$ that has potentially ordinary reduction at $p$ is always associated with a pair $(\tilde{f},\epsilon)$ such that $f=\tilde{f}\otimes\epsilon$ and $\tilde{f}$ is ordinary. We choose $V(f)$ to be the $p$-adic Galois representation attached to a modular form so that when $f$ corresponds to an elliptic curve $E$, $V(f)$ agrees with $V_E\defeq T_pE\otimes\bQ_p$ (i.e., it is dual to Deligne's construction). In particular, the characteristic polynomials of \textit{arithmetic} Frobenius at $\ell\nmid pN$ agree with the Hecke polynomial of $f$. On the other hand, to study a potentially multiplicative form of weight $2$, one would naturally hope to study a twisted Hida family $\{f_k\otimes\epsilon_k\}$ consisting of twists of good ordinary forms. As we will see shortly, the correct thing one should consider are `Heegner pairs', where in some sense corresponds to self-dual twists of the representations attached to modular forms with nebentypus.
	
	Recall that our fixed imaginary quadratic field $K$ satisfies the Heegner hypothesis, i.e., every prime $\ell\mid N$ is split in $K$. Since $N'\mid \lcm(N,\cond(\epsilon)^2)$, we further assume the prime divisors of $\cond(\epsilon)$ are also split in $K$, so that every prime $\ell\mid N'$ is also split in $K$. Hence there exists an ideal $\fN'\subset \cO_K$ such that\[\cO_K/\fN'\cong \bZ/N'\bZ.\]
	
	Choose a Hecke character $\chi$ of infinity type $(0,0)$ whose restriction to $\bA_\bQ^\times$ is $\epsilon^{-2}$ (equivalently, the restriction to $\hat{\cO}_K^\times$ is $\epsilon^2=\eps_{\tilde{f}}^{-1}$. See~\cite[section 4]{BDP13}). Such a Hecke character exists thanks to~\cite[Remark 2.1.2]{JLZ21}. 
	Now the pair $(\tilde{f},\chi)$ is a \textit{Heegner pair} to which one could associate a Heegner point $z_{(\tilde{f},\chi)}$.

	The following results from \textit{op.\ cit.}\ will be adapted. 
	
	\begin{proposition}[Theorem B in~\cite{JLZ21}]\label[proposition]{JLZmain}
		Let $(\tilde{f},\chi)$ be a pair of an $p$-ordinary form $\tilde{f}\in S_2(\Gamma_0(N'),\epsilon^{-2})$ and a Hecke character $\chi$ of finite type $(\fN',\epsilon^{-2})$ and infinity type $(0,0)$. If $p\nmid N'$, then there is a Perrin-Riou regulator map that maps $z_{(\tilde{f},\chi),\infty}$ to the BDP $p$-adic $L$-function $\cL_p(\tilde{f})(\chi)$ up to a unit. 
	\end{proposition}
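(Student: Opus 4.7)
The plan is to establish this as an explicit reciprocity law in the spirit of Bertolini--Darmon--Prasanna and Castella, generalized to the Heegner-pair setting. First, I would construct the $\Lambda^{\ac}$-adic Heegner class $z_{(\tilde{f},\chi),\infty}$ as a Hida-projected inverse limit: starting from CM points on $X_0(N')$ over the tower of ring class fields $K[\fp^n]$, take their Kummer images in $\H^1(K[\fp^n], T_p\tilde{f})$, twist by the specialization of $\chi$ to land in the correct self-dual twist $V(\tilde{f})(\chi)$, apply Hida's ordinary projector (available since $\tilde{f}$ is ordinary and $p \nmid N'$), corestrict down to the anticyclotomic tower and pass to the limit.

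Next, I would construct the Perrin-Riou regulator $\cL_+$ for the local representation $\Fil^+_v V(\tilde{f})(\chi)$ at the split prime $v \mid p$. Since $\tilde{f}$ is $p$-ordinary, the local representation admits a one-dimensional unramified quotient (encoding the unit root $\alpha_p(\tilde f)$), and correspondingly $\Fil^+_v V(\tilde f)(\chi)$ is a rank-one subspace. The big logarithm map in this setting can be built via the Coleman power series of the associated Lubin--Tate formal group, or equivalently via the Wach module / $(\varphi,\Gamma)$-module of the ordinary filtration; it produces a $\Lambda^\nr$-linear, injective map with known cokernel from $\H^1(K_v, \Fil^+_v \bT)_{\bZ_p^\nr}$ to $\Lambda^\nr$.

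The main content is then proving the identity $\cL_+(\res_v(z_{(\tilde{f},\chi),\infty})) \doteq \cL_p(\tilde{f})(\chi)$, where $\doteq$ means ``up to unit.'' I would do this by showing that both sides interpolate the same values on a Zariski-dense set of specializations, namely the locally algebraic anticyclotomic characters of infinity type $(n,-n)$ with $n \geq 1$. On the Heegner side, the image of $z_{(\tilde f,\chi),\infty}$ under the dual exponential at such specializations can be expressed in terms of generalized Heegner cycles attached to the pair $(\tilde f, \chi)$, following the construction of Castella--Hsieh. On the $L$-function side, the interpolation property of the BDP $p$-adic $L$-function $\cL_p(\tilde f)(\chi)$ evaluates to (a $p$-adic avatar of) the same toric period integral. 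Matching these, via a Waldspurger/Ichino-type formula for the relevant local zeta integrals, yields the claim on the dense subset, which extends to $\Lambda^\nr$ by continuity.

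The main obstacle, and the reason this generalizes the classical BDP result nontrivially, is the presence of the nebentypus $\epsilon^{-2}$ of $\tilde f$ together with the Hecke character $\chi$ of finite type $(\fN', \epsilon^{-2})$ rather than $(\fN', \mathbf{1})$: the compatibility of the level structure on $X_0(N')$ with the Shimura reciprocity action of the CM points, and the correct normalization of the toric period integrals appearing in the interpolation formula, must all be tracked carefully so that the character data of $\chi$ cancels the nebentypus of $\tilde f$ to produce a self-dual twist. This is exactly the bookkeeping carried out in~\cite{JLZ21}, and adapting it to our normalizations is the nontrivial input.
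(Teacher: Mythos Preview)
The paper does not prove this proposition: it is stated as a citation of Theorem~B in~\cite{JLZ21} and used as a black box. The only ``proof'' the paper gives (later, in \S\ref{PRreg}) is the one-line observation that the explicit reciprocity law follows by specializing~\cite[Theorem~B]{JLZ21} to the Heegner pair $(\tilde{f},\chi_\epsilon)$, as in~\cite{CastellaHsieh}, using the assumption $p\nmid N'$.

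Your proposal is therefore not comparable to the paper's own argument, since there is none. What you have written is a reasonable outline of how the result of~\cite{JLZ21} itself is proved --- construct the $\Lambda$-adic Heegner class as a norm-compatible system of CM cycles, build the Perrin-Riou regulator from the ordinary filtration, and match both sides on a Zariski-dense set of anticyclotomic characters via the BDP interpolation formula. This is indeed the strategy of~\cite{JLZ21} (generalizing~\cite{CastellaHsieh} and~\cite{BDP13}), and your identification of the main bookkeeping issue (tracking the nebentypus $\epsilon^{-2}$ against the finite type of $\chi$ to land in the self-dual twist) is correct. But for the purposes of this paper, none of that is needed: the statement is imported wholesale.
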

	
	The above result is a generalization of~\cref{HgtoBDP} to pairs $(\tilde{f},\chi)$, and will be revisited in~\cref{PRreg}. In particular, it covers case (I) above. This will be the key in showing the equivalence of the Heegner point Main Conjecture and the Greenberg's Main Conjecture once suitable definitions and formulations are in place. In fact, the original result from~\cite{JLZ21}
	also covers modular forms of higher weights, where the Hecke characters will also have nonzero infinity type. We will not consider such Heegner pairs in this work.
	
	With the above definition of Heegner pairs, the natural question to ask is how do they relate to our potentially good ordinary form. In general, Hecke characters with fixed central character is not unique (in fact, they can differ by some finite order anticyclotomic characters). The corresponding $G_K$-representations are also different. However, their anticyclotomic theory will not the detect the difference. In what follows, we will specifically construct a Heegner pair whose corresponding $G_K$-representation agrees with the one for our potentially good ordinary form.
	
	\begin{proposition}\label[proposition]{selfdual}
		Let $f=\tilde{f}\otimes\eps$ be a weight $2$ form associated to an elliptic curve of potentially good ordinary reduction at $p$. Then there is a Heegner pair $(\tilde{f},\chi_\eps)$ such that $V(\tilde{f}\otimes\eps)|_{G_K}$ agrees with $V(\tilde{f})|_{G_K}\otimes \chi_{\eps}^{\Gal}$ where $\chi_{\eps}^{\Gal}$ is the Galois character over $G_K$ associated to the Hecke character $\chi_\eps$ over $K$. 
	\end{proposition}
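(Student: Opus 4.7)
The plan is to construct the Hecke character $\chi_\epsilon$ of $K$ explicitly from $\epsilon$ via the norm $N_{K/\bQ}$, verify that $(\tilde f,\chi_\epsilon)$ satisfies the Heegner-pair conditions imposed in~\cref{twists}, and then check the claimed isomorphism of $G_K$-representations by a Frobenius-trace comparison at split primes.

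Viewing $\epsilon$ as an idele class character of $\bQ$ by class field theory, I would set
\[
\chi_\epsilon \defeq \epsilon\circ N_{K/\bQ}\colon \bA_K^\times/K^\times \to \bC^\times,
\]
with the convention (replacing $\epsilon$ by $\epsilon^{-1}$ if needed) chosen so that the Galois character $\chi_\epsilon^{\Gal}$ attached via class field theory agrees with $\epsilon|_{G_K}$, where we have identified the Dirichlet character $\epsilon$ with its associated character of $G_\bQ$. Since $N_{K/\bQ}|_{\bC^\times} = z\bar z$ is real and $\epsilon$ has finite order, $\chi_\epsilon$ has infinity type $(0,0)$. Its restriction to $\bA_\bQ^\times$ equals $\epsilon^{\pm 2}$, which matches the central-character condition $\eps_{\tilde f}^{-1}=\epsilon^{-2}$ of a BDP Heegner pair. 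Moreover, by our standing assumption that every prime $\ell\mid\cond(\epsilon)$ splits in $K$, the conductor $\cond(\chi_\epsilon)$ is supported on split primes; combined with the Heegner hypothesis on $N$, this ensures that $\cond(\chi_\epsilon)$ divides an ideal $\fN'\subset\cO_K$ with $\cO_K/\fN' \iso \bZ/N'\bZ$, so $(\tilde f,\chi_\epsilon)$ is a Heegner pair of finite type $(\fN',\epsilon^{-2})$ and infinity type $(0,0)$.

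For the identity $V(\tilde f\otimes\epsilon)|_{G_K}\iso V(\tilde f)|_{G_K}\otimes\chi_\epsilon^{\Gal}$, note that both sides are continuous two-dimensional $p$-adic semisimple $G_K$-representations. By Chebotarev density and the Brauer--Nesbitt theorem, it suffices to compare traces of arithmetic Frobenius at degree-one primes $\lambda\mid\ell$ of $K$ with $\ell\nmid p\,N'\cond(\epsilon)$. At such a prime, the compatibility of global class field theory with restriction/norm gives $\chi_\epsilon^{\Gal}(\Frob_\lambda)=\epsilon(\ell)$, so
\[
\tr\bigl(\Frob_\lambda\mid V(\tilde f)|_{G_K}\otimes\chi_\epsilon^{\Gal}\bigr) = a_\ell(\tilde f)\cdot\epsilon(\ell) = a_\ell(\tilde f\otimes\epsilon) = \tr\bigl(\Frob_\lambda\mid V(\tilde f\otimes\epsilon)|_{G_K}\bigr).
\]
Equality of traces on a dense subset of $G_K$ yields the desired isomorphism.

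The only non-routine point is keeping normalizations consistent: the `dual to Deligne' convention for $V(\tilde f)$ declared in~\cref{twists}, the passage from idele class characters of $K$ to characters of $G_K$, and the central-character convention for BDP Heegner pairs must all be pinned down compatibly. Once this is arranged, the explicit description $\chi_\epsilon=\epsilon^{\pm 1}\circ N_{K/\bQ}$ reduces the Heegner-pair conditions (finite type, infinity type, central character) and the Galois-representation identity to direct verifications.
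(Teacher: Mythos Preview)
Your proof is correct and follows essentially the same construction as the paper: both take $\chi_\epsilon=\epsilon^{\pm1}\circ\Nm_{K/\bQ}$, verify the central-character condition via $\Nm\circ\res=(\cdot)^2$, and check the Galois-representation identity by comparing Frobenius traces. The only minor difference is that you restrict to degree-one (split) primes and invoke semisimplicity plus Brauer--Nesbitt, whereas the paper treats inert and split primes separately; your shortcut is valid since degree-one primes have density~$1$ in $K$ and the restrictions of irreducible $G_\bQ$-representations to $G_K$ are semisimple by Clifford theory.
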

	\begin{proof}
		Consider the following commutative diagram\begin{center}
			\begin{tikzcd}
				\bA_{K,\text{f}}^\times/K^\times\ar[r,"\Art_K"]\ar[d,"\Nm_{K/\Q}"]& \Gal(K^\ab/K)\ar[d,"\theta_{K/\bQ}"]\ar[rd,dashed,"\chi_\eps^\Gal"] &\\
				\bA_{\bQ,\text{f}}^\times/\bQ^\times \ar[r,"\Art_\bQ"]\arrow[bend right=25]{rr}{\eps^{-1}}\arrow[bend left=15]{u}{\res_{K/\bQ}}& \Gal(\bQ^\ab/\bQ)\ar[r,"\eps_0"]&\bC_p^*
		\end{tikzcd}\end{center}
		where $\theta_{K/\bQ}$ is the natural map induced by the inclusions $\bQ\subset K$ and $\bQ^\ab\subset K^\ab$. 
		
		We fix the character $\epsilon^{-1}$ which will determine $\eps_0$ by a choice of the normalization of the Artin maps. We follow the convention in~\cite{JLZ21}, so that uniformizers map to \textit{geometric} Frobenius elements. In other words, for any $\ell\nmid Np$, $\eps_0$ maps $\Frob_\ell^\geo$ to $\eps(\ell)^{-1}$. We claim that $\chi_\eps\defeq \eps^{-1}\circ\Nm_{K/\bQ}$ is the desired Hecke character such that (i) $(\tilde{f},\chi_\eps)$ is a Heegner pair and (ii) $V(f)|_{G_K}=V(\tilde{f}\otimes\eps)|_{G_K}$ agrees with $V(\tilde{f})|_{G_K}\otimes \chi_{\eps}^{\Gal}$.
		
		We first check (i). This follows from the fact that $\chi_\eps|_{\bA_\bQ^\times}=\chi_\eps\circ\res_{K/\bQ}=\eps^{-1}\circ\Nm_{K/\bQ}\circ\res_{K/\bQ}=\eps^{-2}$ since composing norm with restriction introduces a square.
		
		Now we check (ii). First note that $\Gal(K^\ab/K)$ is isomorphic to the profinite completion of $\bA_{K,\text{f}}^\times/K^\times$, so by the universal property of profinite completion, there is a map $\chi_\eps^\Gal$ from $\Gal(K^\ab/K)$ to $\bC_p^*$ such that $\chi_\eps=\chi_\eps^\Gal\circ \Art_K$. This is the $G_K$-character corresponding to $\chi_\eps$ (note that $G_K$-characters factor through $\Gal(K^\ab/K)$). To check the two $G_K$-representations agree, it suffices to check that the traces of the arithmetic Frobenius elements at $\lambda\mid\ell$ in $K$ for a density $1$ set of rational $\ell\nmid pN$ match on both sides. Hence we consider two cases depending on if $\ell$ is either inert or split in $K$.
		
		Case (I): $\ell$ is inert in $K$. In this situation, there is a unique prime $\lambda$ above $\ell$ and the inclusion of $\Frob^\ari_\lambda$ from $G_\lambda$ to $G_\ell$ (the decomposition groups) is $(\Frob^\ari_\ell)^2$. Hence the trace of $\Frob_\lambda^\ari$ under $V(\tilde{f}\otimes\eps)|_{G_K}$ is $(a_\ell\eps(\ell))^2$. On the other hand, recall that $\eps_0$ maps $\Frob_\ell^\geo$ to $\eps(\ell)^{-1}$, so it maps $\Frob_\ell^\ari$ to $\eps(\ell)$ and the trace of $\Frob_\lambda^\ari$ under $\chi_{\eps}^{\Gal}$ will be $\eps(\ell)^2$. Thus the trace of $\Frob_\lambda^\ari$ under $V(\tilde{f})|_{G_K}\otimes \chi_{\eps}^{\Gal}$ is $a_\ell^2\eps(\ell)^2$ which agrees with that for $V(\tilde{f}\otimes\eps)$.
		
		Case (II): $\ell$ splits in $K$. In this situation, there are two places $\lambda_1,\lambda_2$ above $\ell$ and the inclusion of $\Frob_{\lambda_i}^\ari$ from $G_{\lambda_i}$ to $G_\ell$ is $\Frob_\ell^\ari$ for $i=1,2$. As in the previous case, one easily checks that the traces of $\Frob_{\lambda_i}^\ari$ under both $G_K$-representations are $a_\ell\eps(\ell)$.
	\end{proof}
	
	According to~\cite[Remark 2.3.3]{JLZ21}, $V\defeq V(\tilde{f})\otimes \chi_\eps^\Gal$ satisfies $V^\tau\cong V^*(1)$ (note that their choice of $V_p(f)$ is dual to our $V(f)$). By abuse of notation, we will occasionally use $-\otimes\eps$ to mean $-\otimes\chi_\eps^\Gal$ where $-$ can be $V(\tilde{f}),T_{\tilde{f}}$ or $\Fil^{\pm}$ of them. Thus according~\cref{selfdual}, $V(f)=V(\tilde{f}\otimes\eps)=V(\tilde{f})\otimes\eps$ makes sense.
	
	Finally, we note that~\cref{JLZmain} does not cover cases (II). However, the Heegner cycles associated to a Heegner pair is still available, and one could possibly argue as in~\cite[section 5]{KY24} to study (twists of) multiplicative forms using Hida arguments. 
	
	From now on, we assume $p\nmid N'$.
	
	\subsection{Twisted ordinary Selmer group}\label{tword}
	In this subsection, we naturally extend the definitions for a $p$-ordinary form to a Heegner pair using results from the previous subsection. In particular, these Selmer groups also agree with the Bloch--Kato Selmer groups. For our applications, it is sufficient to work with a Heegner pair $(\tilde{f},\chi_\epsilon)$ coming from an elliptic curve of potentially good ordinary reduction as before. However, we choose to work in general contexts where $\tilde{f}$ is any $p$-good ordinary form unless otherwise stated.
	
	Let $(\tilde{f},\chi_\eps)$ be any Heegner pair. From the $p$-ordinarity of $\tilde{f}$, there is a short exact sequence\[
	0\to \Fil^+(V(\tilde{f}))\to V(\tilde{f}) \to \Fil^-(V(\tilde{f})) \to 0
	\]
	such that $\Fil^-(V(\tilde{f}))$ is unramified.
	
	One then obtains the following twisted sequence\[
	0\to \Fil^+(V(\tilde{f}))\otimes\eps \to V(\tilde{f})\otimes\eps \to \Fil^-(V(\tilde{f}))\otimes\eps \to 0,
	\] which will be abbreviated as\[
	0\to \Fil^+(V_\epsilon)\to V_\epsilon\to \Fil^-(V_\epsilon) \to 0
	\] from now. Note that $\Fil^-(V_\epsilon)$ may no longer be unramified at $p$ if $p\mid \cond(\epsilon)$. For example, when $f=\tilde{f}\otimes \epsilon$ is a modular form with CM, we have $V_\epsilon=V(f)$ and one can explicitly choose $\epsilon$ so that $p\mid\cond(\epsilon)$ (see~\cite[section 5]{Müller24}). Even though we generally assume $p\mid \cond(\eps)$ since $(\tilde{f},\chi_\eps)$ should correspond to a modular form with additive reduction at $p$, we do not make it explicit and all the results still hold if $p\nmid\cond(\eps)$.

	Let $\Fil^+(T_\epsilon)\defeq\Fil^+(T_{\tilde{f}})\otimes\epsilon$, where $\Fil^+(T_{\tilde{f}})\defeq T_{\tilde{f}}\cap \Fil^+(V(\tilde{f}))$ and $T_{\tilde{f}}$ is any Galois stable lattice of $V(\tilde{f})$. Since $\tilde{f}$ is ordinary at $p$, when $\tilde{f}$ corresponds to an elliptic curve $E$, $\Fil^+(T_{\tilde{f}})$ is just $\Fil^+_w(T_pE)=\ker(T_pE\to T_p\tilde{E})$ as before. Then we have the natural definitions of the \textit{ordinary Selmer groups twisted by $\epsilon$}, analogous to the ordinary Selmer group.
	
	\begin{definition}
		Let $(\tilde{f},\chi_\epsilon)$ be a Heegner pair and $\epsilon$ be the central character of $\chi$. The \textit{ordinary Selmer group twisted by $\epsilon$ of $V_\epsilon=V(\tilde{f})\otimes \eps$} is given by the local condition\[
		\H^1_{\cF_{\ord,\epsilon}}(K_w,V_\epsilon)\coloneq\begin{cases}
			\im(\H^1(K_w,\Fil^+(V_\epsilon))\to \H^1(K_w,V_\epsilon)) &\text{if }w\mid p;\\
			\H^1_\ur(K_w, V_\epsilon) &\text{else},
		\end{cases}
		\]  
		The \textit{ordinary Selmer group twisted by $\epsilon$} of $T_\epsilon\defeq T_{\tilde{f}}\otimes\epsilon$ and $A_\epsilon\defeq V_\epsilon/T_\epsilon$ are defined via propagation with respect to the short exact sequence\[
		0\to T_\epsilon\to V_\epsilon\to A_\epsilon\to 0
		\]
	\end{definition}
	
	As before, we will actually work with the following Selmer triple $(T_{\alpha,\epsilon},\cF_{ord,\epsilon},\sL_{\tilde{f},\epsilon})$, where\begin{itemize}
		\item $T_{\alpha,\epsilon}=(T_{\tilde{f}}\otimes \epsilon)\otimes R(\alpha)$,
		\item $\cF_{ord,\epsilon}$ is the \textit{ordinary Selmer structure twisted by $\epsilon$} on $V_{\alpha,\epsilon}=T_{\alpha,\epsilon}\otimes\Phi$ defined with $\Sigma(\cF_{ord,\epsilon})=\{w\mid pN\}$ and \[
		\H^1_{\cF_{ord,\epsilon}}(K_w,V_{\alpha,\epsilon})\defeq\begin{cases}
			\im(\H^1(K_w,\Fil^+(V_{\alpha,\epsilon}))\to\H^1(K_w,V_{\alpha,\epsilon}))&\text{if }w\mid p;\\
			\H^1_\ur(K_w,V_{\alpha,\epsilon})&\text{else},
		\end{cases}
		\]
		where $\Fil^+(T_{\alpha,\epsilon})\defeq(\Fil^+(T_{\tilde{f}})\otimes\epsilon)\otimes R(\alpha)$ and $\Fil^+(V_{\alpha,\epsilon})\defeq\Fil^+(T_{\alpha,\epsilon})\otimes\Phi$.\\
		Let $\cF_{ord,\epsilon}$ also denote the Selmer structure on $T_{\alpha,\epsilon}$ and $A_{\alpha,\epsilon}\defeq T_{\alpha,\epsilon}\otimes\Phi/R\isom V_{\alpha,\epsilon}/T_{\alpha,\epsilon}$ by propagation.
		\item $\sL_{\tilde{f},\epsilon}$ is defined in the same way as $\sL_E$ with $T_pE$ replaced by $T_{\tilde{f}}\otimes\epsilon$ and $a_\ell(E)$ replaced by $a_\ell(\tilde{f})\epsilon(\ell))$.
	\end{itemize}
	\begin{remark}
		Recall that if an elliptic curve $E/\bQ$ has potentially ordinary reduction at $p$ with associated weight $2$ newform $f$, then $f=\tilde{f}\otimes\epsilon$ for a Heegner pair $(\tilde{f},\chi_\epsilon)$ and $T_f=T_{\tilde{f}}\otimes\epsilon$. Letting $\Fil^+(T_f)$ denote $\Fil^+(T_\epsilon)$, we formally retrieve the original definition of the Selmer triple for $f$ (or rather, for $E$). One should keep in mind that the filtration is only formally defined and $\Fil^-(V(f))\defeq V(f)/\Fil^+(V(f))$ may not be unramified. It is unramified up to a finite order character, however.
	\end{remark}
	
	In the setting of the previous remark, we now compare the ordinary Selmer groups twisted by $\epsilon$ of $M(f)$ to the Bloch--Kato Selmer groups $\H^1_\BK(K,M(f))$, where $M(f)=V(f),T(f)$ or $A(f)$.
	
	We first consider the Selmer groups for $V(f)=V(\tilde{f})\otimes\eps$. We will show that the local conditions of both Selmer groups differ by finite amount (more precisely, there is a map between them with finite kernel and cokernel). Note that it suffices to consider the places $w=v$ or $\ol v$ above $p$, since away from $p$ the local conditions agree. 
	
	Choose a finite Galois extension $L/K$ (for example, take $L=K(\epsilon)$) such that the localization $L_u$ at a place $u\mid w\mid p$ trivializes $\epsilon$. Consider the following commutative diagram\\
	\begin{tikzcd}
		0\ar[r] &\H^1(K_w,\Fil^+(V(f)_\alpha))\ar[r,"\isom"]\ar[d]& \H^1(L_u,\Fil^+(V(f)_\alpha))^{\Gal(L_u/K_w)} \ar[r]\ar[d]& 0 \\
		0\ar[r]& \H^1(K_w,V(f)_\alpha) \ar[r,"\res_{L_u/K_w}(\isom)"]& \H^1(L_u,V(f)_\alpha)^{\Gal(L_u/K_w)}  \ar[r]&0,
	\end{tikzcd}\\
	where $V(f)_\alpha=V_{\alpha,\epsilon}$ and $\Fil^+(V(f)_\alpha)=\Fil^+(V_{\alpha,\epsilon})$, and the maps are the natural restriction maps. The rows are isomorphisms because the modules in the cohomology groups are vector spaces (an explicit isomorphism can be given by a composition of restrictions and corestrictions). It the follows that \begin{align*}
		&\im(\H^1(K_w,\Fil^+(V(f)_\alpha))\to\H^1(K_w,V(f)_\alpha))\\=&\res_{L_u/K_w}^{-1}(\im(\H^1(L_u,\Fil^+(V(f)_\alpha))\to\H^1(L_u,V(f)_\alpha))^{\Gal(L_u/K_w)})
	\end{align*}
	
	Similarly there is a commutative diagram\\
	\begin{tikzcd}
		&0\ar[d]&0\ar[d]&\\
		&\H^1_\BK(K_w,V(f)_\alpha)\ar[r]\ar[d]&\H^1_\BK(L_u,V(f)_\alpha)^{\Gal(L_u/K_w)}\ar[d] & 
		\\	0\ar[r] &\H^1(K_w,V(f)_\alpha)\ar[r,"\res_{L_u/K_w}(\isom)"]\ar[d]& \H^1(L_u,V(f)_\alpha)^{\Gal(L_u/K_w)} \ar[r]\ar[d]& 0 \\
		0\ar[r]& \H^1(K_w,V(f)_\alpha\otimes \bB_\cris) \ar[r,"\isom"]& \H^1(L_u,V(f)_\alpha\otimes\bB_\cris)^{\Gal(L_u/K_w)}  \ar[r]&0,
	\end{tikzcd}\\
	and it follows that \[
	\H^1_\BK(K_w,V(f)_\alpha)=\res_{L_u/K_w}^{-1}(\H^1_\BK(L_u,V(f)_\alpha)^{\Gal(L_u/K_w)})
	\]
	Since $L_u$ trivializes $\epsilon$, when considered as modules over $L_u$, $M_\epsilon\defeq M_{\tilde{f}}\otimes\epsilon$ is identified $M_{\tilde{f}}$ for $M=V,T$ or $A$, and $V_{\alpha,\epsilon}$ is identified with $V(\tilde{f})_{\alpha}\defeq (T_{\tilde{f}}\otimes R(\alpha))\otimes \Phi$. Furthermore, $\Fil^+(V_{\alpha,\epsilon})$ is identified with $\Fil^+(V(\tilde{f})_{\alpha})\defeq (\Fil^+(T_{\tilde{f}})\otimes R(\alpha))\otimes \Phi$. In particular, \[\im(\H^1(L_u,\Fil^+(V_{\alpha,\epsilon}))\to\H^1(L_u,V_{\alpha,\epsilon}))^{\Gal(L_u/K_w)}\] can be identified with \[\im(\H^1(L_u,\Fil^+(V(\tilde{f})_\alpha))\to \H^1(L_u,V(\tilde{f})_\alpha)^{\Gal(L_u/K_w)}).\]
	By $p$-ordinarity of $\tilde{f}$, the last group is the same as\[
	\H^1_\BK(L_u,V(\tilde{f})_\alpha)^{\Gal(L_u/K_w)},
	\] which is in turn identified with \[
	\H^1_\BK(L_u,V(f)_\alpha)^{\Gal(L_u/K_w)}. 
	\]
	Combining the above identifications, we arrive at the following observation.
	
	\begin{proposition}
		Let $f$ be a weight $2$ potentially $p$-ordinary newform corresponding to a Heegner pair $(\tilde{f},\chi_\epsilon)$ such that $f=\tilde{f}\otimes\epsilon$. The ordinary Selmer group of $f$ of $V(f)$, defined as the ordinary Selmer group twisted by $\epsilon$ for $V(\tilde{f})\otimes\eps$ agrees with the Bloch--Kato Selmer group for $V(f)$.
		
		Consequently, the two types of Selmer groups for $T_{f,\alpha}=T_{\alpha,\epsilon}$ and $A_{f,\alpha}=A_{\alpha,\epsilon}$ also agree, since the local conditions come from propagation. 
	\end{proposition}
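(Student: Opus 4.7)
The plan is to reduce the identification, place by place, to the classical identification for good ordinary representations after extending scalars to a finite Galois extension that trivializes the twisting character. Since both Selmer structures impose the unramified local condition at all places $w \nmid p\infty$, and both are trivial at archimedean places, the only nontrivial comparison is at $w \mid p$.

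First I would pick a finite Galois extension $L/K$ (e.g.\ $L = K(\epsilon)$) such that for a fixed place $u \mid w \mid p$ the completion $L_u$ trivializes $\epsilon$. Since everything lives in $\Phi$-vector spaces, the inflation–restriction sequence gives that the restriction map $\H^1(K_w, V(f)_\alpha) \to \H^1(L_u, V(f)_\alpha)^{\Gal(L_u/K_w)}$ is an isomorphism, and the same for the sub $\Fil^+(V(f)_\alpha)$. From the two commutative diagrams already displayed in the text, both local conditions are obtained as the preimage under this restriction isomorphism of their counterparts over $L_u$, namely the image of the Fil$^+$-part on the one hand and the Bloch--Kato kernel (defined using $\bB_\cris$) on the other.

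Over $L_u$, where $\epsilon$ is trivial, the representation $V(f)_\alpha = V_{\alpha,\epsilon}$ is canonically identified with $V(\tilde{f})_\alpha$, and this identification carries $\Fil^+(V_{\alpha,\epsilon})$ to $\Fil^+(V(\tilde{f})_\alpha)$. Since $\tilde{f}$ is $p$-good ordinary, this reduces the problem to the well-known fact in the good ordinary case that the ordinary local condition coincides with the Bloch--Kato condition at primes above $p$ (this goes back to Flach and is recorded e.g.\ in~\cite{BK90}): both subspaces equal the image of $\H^1(L_u, \Fil^+(V(\tilde{f})_\alpha))$ in $\H^1(L_u, V(\tilde{f})_\alpha)$. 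Taking Galois invariants and pulling back along the restriction isomorphism then yields the desired equality over $K_w$. Assembling over all places gives the equality of the Selmer groups for $V(f)$.

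Finally, the statement for $T_{f,\alpha}$ and $A_{f,\alpha}$ is immediate since both the twisted ordinary Selmer structure and the Bloch--Kato Selmer structure are defined on $T$ and $A$ by propagation through the tautological sequence $0 \to T_{f,\alpha} \to V_{f,\alpha} \to A_{f,\alpha} \to 0$, and propagation depends only on the local condition on $V_{f,\alpha}$. The subtle point — and the only place where one needs input beyond formal diagram chasing — is the classical identification of the ordinary and Bloch--Kato conditions at a good ordinary prime applied to $\tilde{f}$ over $L_u$; everything else is bookkeeping with the restriction isomorphism, which works because we are in characteristic zero and $[L_u:K_w]$ is a unit in $\Phi$.
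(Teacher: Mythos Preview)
Your proposal is correct and follows essentially the same argument as the paper: the paper's proof is precisely the discussion immediately preceding the proposition, namely the two commutative diagrams showing that both the twisted ordinary condition and the Bloch--Kato condition at $w\mid p$ are the preimages under the restriction isomorphism of their counterparts over $L_u$, together with the identification over $L_u$ (where $\epsilon$ becomes trivial) of $V_{\alpha,\epsilon}$ with $V(\tilde{f})_\alpha$ and the classical good ordinary equality $\H^1_{\ord}=\H^1_{\BK}$ for $\tilde{f}$. Your added remarks on why restriction is an isomorphism (characteristic zero coefficients, $[L_u:K_w]$ invertible in $\Phi$) and the reference for the good ordinary identification make the exposition slightly more explicit, but the route is identical.
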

	
	\subsection{One divisibility of the Heegner Point Main Conjecture}
	In this subsection, we run a Kolyvagin system argument for the Heegner pair $(\tilde{f},\chi_\epsilon)$ to show one divisibility of the Heegner Point Main Conjecture, which we first formulate. We will keep the conventions from~\cref{goodIwa} and notations from~\cref{tword}.
	
	Let $M_\epsilon=T_\epsilon\otimes_{\bZ_p}\Lambda^\vee$ and $\bT_\epsilon=M_\epsilon^\vee(1)\isom T_\epsilon\otimes_{\bZ_p}\Lambda$ be as before. Put\[\Fil^+(M_\epsilon)\defeq\Fil^+(T_\epsilon)\otimes_{\bZ_p}\Lambda^\vee\text{ and }\Fil^+(\bT_\epsilon)\defeq\Fil^+(T_\epsilon)\otimes_{\bZ_p}\Lambda.\]
	Define the \textit{ordinary Selmer structure $\cF_{\Lambda,\epsilon}$ twisted by $\epsilon$} on $M_\epsilon$ and $\bT_\epsilon$ by\[
	\H^1_{\cF_\epsilon}(K_w,M_\epsilon)\defeq\begin{cases}
		\im(\H^1(K_w,\Fil^+(M_\epsilon))\to\H^1(K_w,M_\epsilon))&\text{if } w\mid p\\
		0&\text{else},
	\end{cases}
	\]
	and\[
	\H^1_{\cF_{\Lambda,\epsilon}}(K_w,\bT_\epsilon)\defeq\begin{cases}
		\im(\H^1(K_w,\Fil^+_u(\bT_\epsilon))\to\H^1(K_w,\bT_\epsilon))&\text{if } w\mid p\\
		0&\text{else},
	\end{cases}
	\]
	
	Denote by \[\cX=\H^1_{\cF_{\Lambda,\epsilon}}(K,M_\epsilon)^\vee=\Hom_\cts(\H^1_{\cF_{\Lambda,\epsilon}}(K,M_\epsilon),\bQ_p/\bZ_p)\]
	the Pontryagin dual of the associated Selmer group $\H^1_{\cF_{\Lambda,\epsilon}}(K,M_\epsilon)$. 
	
	\begin{conjecture}[Heegner Point Main Conjecture]
		$\H^1_{\cF_{\Lambda,\epsilon}}(K,\bT_\epsilon)$ has $\Lambda$-rank one and there is a finitely generated torsion $\Lambda$-module $M$ such that\begin{enumerate}
			\item[(i)] $\cX\sim\Lambda\oplus M\oplus M$,
			\item[(ii)] $\Char_\Lambda(M)=\Char_\Lambda(\H^1_{\cF_{\Lambda,\epsilon}}(K,\bT_\epsilon)/\Lambda\kappa_1)$ where $\kappa_1$ is a class coming from a Heegner point associated to the Heegner pair $(\tilde{f},\chi_\epsilon)$ as in~\cite{JLZ21}.
		\end{enumerate}
	\end{conjecture}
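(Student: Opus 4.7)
The plan is to follow the three-step strategy reviewed in~\cref{Iwasawago}, adapting each step to the twisted Selmer triple $(\bT_\epsilon,\cF_{\Lambda,\epsilon},\sL_{\tilde{f},\epsilon})$ attached to the Heegner pair $(\tilde{f},\chi_\epsilon)$. First, I would verify that this triple satisfies the analogues of hypotheses $(\H.0)$--$(\H.3)$ from~\cref{Iwasawago}. Hypothesis $(\H.0)$ is immediate from $T_\epsilon = T_{\tilde{f}}\otimes\chi_\epsilon^{\Gal}$ being free of rank $2$; the reducibility in $(\H.1)$ follows from~\cref{selfdual} together with the reducibility of $E[p]$, after passing to an isogenous curve if necessary (as in~\cite{KY24}); $(\H.2)$ holds because the propagated unramified condition is Cartesian; and the self-duality $(\H.3)$ is precisely the observation after~\cref{selfdual} that $V^\tau\cong V^*(1)$.

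With these hypotheses in hand, the plan for obtaining the structure theorem and one divisibility is to run the Kolyvagin system machinery of~\cite{How2004} in the form refined in~\cite{CGLS,CGS} (cf.~\cref{Hgstr}), using the Heegner Kolyvagin class for the pair $(\tilde{f},\chi_\epsilon)$ constructed in~\cite{JLZ21}. The output will be that $\H^1_{\cF_{\Lambda,\epsilon}}(K,\bT_\epsilon)$ has $\Lambda$-rank one, that $\cX\sim\Lambda\oplus M\oplus M$ for some torsion $\Lambda$-module $M$, and that
\[
\Char_\Lambda(M)\supset \Char_\Lambda\bigl(\H^1_{\cF_{\Lambda,\epsilon}}(K,\bT_\epsilon)/\Lambda\cdot\kappa_1\bigr).
\]
This gives both (i) and the `$\supset$' half of (ii).

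For the reverse divisibility, I would translate to the Greenberg-style formulation. Using~\cref{JLZmain} (the Perrin-Riou regulator of~\cite{JLZ21}) I would prove a twisted analogue of~\cref{equiv}, showing that the `$\supset$' obtained above is equivalent to a one-sided divisibility
\[
\Char_\Lambda(\fX_{f})\Lambda^{\nr}\supset (\cL_{\tilde{f}}(\chi_\epsilon))
\]
in the unramified Iwasawa algebra, where $\fX_{f}$ denotes the Greenberg dual Selmer group attached to $V_\epsilon$. To upgrade this to an equality, I would compare $\lambda$- and $\mu$-invariants on both sides by reducing modulo $p$: the residual representation attached to $f$ sits in a short exact sequence of the form~\eqref{Eis}, and the Iwasawa invariants of $\fX_f$ and $\cL_{\tilde{f}}(\chi_\epsilon)$ should decompose into contributions from the two characters $\phi$ and $\psi$ (twisted by $\epsilon$), modulo an extra factor $\cP_{\phi,\psi,E}$ that cancels on both sides. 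The Iwasawa Main Conjectures for characters (\cite{Rubin1991,CW78}) then yield equality of the character-level invariants, hence equality on the $f$-level, which converts the one-sided divisibility into an equality and descends back to (ii) via the equivalence.

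The main obstacle is Step II, the explicit reciprocity law for the twisted Heegner class. \cref{JLZmain} is formulated for a Heegner pair with $p\nmid N'$, which is precisely our running hypothesis, but to match the twisted ordinary local condition $\H^1_{\cF_{\Lambda,\epsilon}}(K_w,\bT_\epsilon)$ defined via $\Fil^+(T_{\tilde{f}})\otimes\epsilon$ with the BDP-type regulator map (which naturally lives on $\Fil^+(T_{\tilde{f}})$), one must carefully track how the character $\chi_\epsilon^{\Gal}$ acts on local cohomology at $v$ and $\ol v$. In particular, the wild ramification of $\epsilon$ at $p$ may change ramification behavior of $\Fil^-(V_\epsilon)$, so extra care is needed when comparing our Selmer group with the unramified Selmer group of~\cite{KY24} and when identifying the image of the Heegner class with $\cL_{\tilde{f}}(\chi_\epsilon)$ up to a unit. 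A secondary obstacle is the comparison of $\lambda$-invariants in Step III when $\phi$ or $\psi$ is trivial; I would handle this by the exceptional-zero type correction already encountered in~\cite{CGLS,KY24}, noting that the relevant factor $\cP_{\phi,\psi,E}$ is insensitive to the twist by $\epsilon$ because $\epsilon$ is fixed on the residual level.
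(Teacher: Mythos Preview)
Your three-step strategy matches the paper's approach exactly: verify the hypotheses $(\H.0)$--$(\H.3)$ for the twisted triple, run the Kolyvagin system argument of~\cite{How2004,CGLS,CGS} to get the structure theorem and the divisibility `$\supset$', pass to Greenberg's formulation via the Perrin-Riou regulator of~\cite{JLZ21}, and then close the gap by matching Iwasawa invariants through the characters $\phi\epsilon,\psi\epsilon$ and Rubin's main conjecture.

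Two technical points deserve sharper attention than you give them. First, \cite{JLZ21} only constructs the base class $z_{(\tilde f,\chi_\epsilon),\infty}$ varying in the anticyclotomic direction; it does \emph{not} hand you a Kolyvagin system. The paper has to build the classes $\kappa_n$ over Kolyvagin levels by adapting the cycle construction to orders $\cO_{np^m}$ and then push them through the machinery of~\cite{LV,CastellaHsieh}, dealing along the way with the failure of the restriction and trace maps to be isomorphisms (handled, as in~\cite{KY24}, by a uniform $p$-power correction). Second, your ``main obstacle'' is slightly misplaced: the explicit reciprocity law of~\cite{JLZ21} already lives on $\Fil^+(\bT_\epsilon)$ for the Heegner pair, so matching local conditions at $v$ is not where the twist bites. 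The genuine new input is the specialization lemma (the analogue of~\cite[Lemma~2.2.7]{How2004}), which requires bounding $\H^0(K_{\infty,w},\Fil^-(A_\epsilon))$; the paper does this by ascending to a finite extension $L_u/K_w$ that trivializes $\epsilon$, invoking the good-ordinary argument there, and controlling the descent. This ``pass to $L_u$ and come back'' trick is the recurring theme wherever the additive reduction intervenes, and it is what your proposal should foreground rather than the wild ramification of $\epsilon$ in Step~II.
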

	
	The remaining part of this section is devoted to the proof of the structure theorems and the `$\supset$' divisibility of item (ii) in the above conjecture.
	
	Starting from now, as in~\cite{KY24}, replacing $T_{\tilde{f}}$ by a different Galois stable lattice if necessary, we assume that $\phi|_{G_p}\ne\mathbf{1}$ in the following short exact sequence\[
	0\to \F(\phi) \to \overline{\rho_{\tilde{f}}\otimes\eps}\to \F(\psi) \to 0
	\] where $G_p\subset G_K$ is the decomposition group at $p$. Note that for our self-dual representation $V_\eps$, one knows that $\phi\psi=\omega$, the mod-$p$ cyclotomic character. This assumption is reasonable because one knows the Iwasawa Main Conjectures will be independent from the choice of a lattice, and our choice is possible thanks to Ribet's lemma. A consequence of this assumption is that $\H^0(K,\overline{\rho_{\tilde{f}}\otimes\epsilon})=0$.
	
	\subsubsection{A Kolyvagin system argument}\label{potKoly}
	We first need an analog of~\cref{middle}. We will work with the following Selmer triple $(T_{\alpha,\epsilon},\cF_{\ord,\epsilon},\sL_{\tilde{f},\epsilon})$ introduced in~\cref{tword}.
	
	We first check the hypothesis (H.0)--(H.3) that the above Selmer triple shall satisfy.
	\begin{itemize}
		\item (H.0) is trivially satisfied.
		\item (H.1) is our running hypothesis (to be removed in the end). Specifically, we assume $\H^0(K,\ol{\rho_{\tilde{f}}\otimes \epsilon})=0$.
		\item (H.2) is satisfied because the local conditions on $T_{\alpha,\epsilon}$ come from propagation from $V_{\alpha,\epsilon}$, and hence are Cartesian by~\cite[Lemma 3.7.1]{MR04}.
		\item (H.3) follows from an analog of~\cite[Theorem 2.1.1]{How2004}. One just need to take an extra twist by $\epsilon$ of everything. Also note that our $V_\epsilon$ is self-dual by the comments after~\cref{selfdual}.
	\end{itemize}
	
	It then follows as in the ordinary case that we get the following structure theorem.
	\begin{proposition}[Analog of Lemma 6.1.1 in~\cite{CGS}]\label[proposition]{pointer}
		Assume $\H^0(K,\overline{\rho_{\tilde{f}}\otimes\epsilon})=0$ and suppose there is a Kolyvagin system $\kappa_{\alpha,\epsilon}=\{\kappa_{\alpha,\epsilon,n}\}\in \KS(T_{\alpha,\epsilon},\cF_{\ord,\epsilon},\sL_{\tilde{f},\epsilon})$ with $\kappa_{\alpha,\epsilon,1}\ne 0$. Then $\H^1_{\cF_{\ord,\epsilon}}(K,T_{\alpha,\epsilon})$ has rank one, and there is a finite $R$ module $M_\alpha$ such that\[
		\H^1_{\cF_{\ord,\epsilon}}(K,A_{\alpha,\epsilon})\cong(\Phi/R)\oplus M_\alpha\oplus M_\alpha
		\]with \[\length_R(M_\alpha)\leq\length_R(\H^1_{\cF_{\ord,\epsilon}}(K,T_{\alpha,\epsilon})/R\cdot\kappa_{\alpha,\epsilon,1})+E_\alpha\]for some constant $E_{\alpha,\epsilon}\in \bZ_{\geq 0}$ depending only on $C_\alpha,T_{\alpha,\epsilon},$ and $\rk_{\bZ_p}(R)$. 
	\end{proposition}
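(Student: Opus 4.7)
The plan is to mirror the argument of~\cite[Theorem~3.2.1]{CGLS} and its refinement in~\cite[Lemma~6.1.1]{CGS}, verifying at each step that the twist by $\chi_\epsilon$ does not disrupt the underlying Kolyvagin system machinery. Since the four hypotheses (H.0)--(H.3) have been verified for the Selmer triple $(T_{\alpha,\epsilon},\cF_{\ord,\epsilon},\sL_{\tilde{f},\epsilon})$ immediately before the statement, the main task is to turn these hypotheses into the desired structural conclusion on $\H^1_{\cF_{\ord,\epsilon}}(K,A_{\alpha,\epsilon})$.

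First I would set up the Mazur--Rubin/Howard Kolyvagin system formalism for the twisted triple. Using the self-duality of $V_\epsilon$ established in~\cref{selfdual} together with the local pairing provided by (H.3), global Poitou--Tate duality gives a perfect pairing between $\H^1_{\cF_{\ord,\epsilon}}(K,A_{\alpha,\epsilon})$ and $\H^1_{\cF_{\ord,\epsilon}^*}(K,T_{\alpha,\epsilon})$, where $\cF_{\ord,\epsilon}^*$ is the orthogonal complement Selmer structure. The self-orthogonality of $\cF_{\ord,\epsilon}$ in (H.3) identifies the two structures, so a bound on one side translates into a bound on the other. This is the standard input needed to obtain the $M_\alpha\oplus M_\alpha$ shape of the Selmer group.

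Next I would run the Kolyvagin descent exactly as in~\cite{How2004}, modified as in~\cite{CGLS}~\S3.2 to work with a residually reducible (but with $H^0(K,\overline{\rho_{\tilde{f}}\otimes\eps})=0$) representation. The input is the nonzero Kolyvagin class $\kappa_{\alpha,\epsilon,1}$; by Chebotarev one produces primes $\lambda\in\sL_{\tilde{f},\epsilon}$ at which the localization classes $\loc_\lambda(\kappa_{\alpha,\epsilon,n})$ detect enough of the Selmer group to successively annihilate cohomology classes, yielding the length bound
\[
\length_R(M_\alpha)\leq\length_R(\H^1_{\cF_{\ord,\epsilon}}(K,T_{\alpha,\epsilon})/R\cdot\kappa_{\alpha,\epsilon,1})+E_{\alpha,\epsilon}.
\]
The Cartesian property (H.2) and the finite-singular comparison isomorphism $\phi_\lambda^\fs$ from~\cref{Kolysys} are what make the Kolyvagin derivative classes land in the correct modified Selmer groups after the twist; this is automatic because propagating a Cartesian local condition along $T_\epsilon\hookrightarrow T_\epsilon/\fm^k$ commutes with tensoring by $R(\alpha)$ and by $\eps$.

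The main obstacle, and the only place where the twist genuinely enters, will be controlling the error term $E_{\alpha,\epsilon}$. In the untwisted setting~\cite{CGLS} this term measures the defect coming from the failure of the full ``big image'' hypothesis, and is a function of $C_\alpha$, the lattice $T_{\tilde f}$, and $\rk_{\bZ_p}R$. In our case the relevant Galois representation is $\overline{\rho_{\tilde{f}}\otimes\eps}$, whose image factors through a quotient of $G_K$ that differs from that of $\overline{\rho_{\tilde{f}}}$ only by the finite-order character $\eps$; so the Chebotarev input that selects Kolyvagin primes in $\sL_{\tilde f,\eps}$ produces the same style of bound, with $E_{\alpha,\eps}$ depending only on $C_\alpha$, $T_{\alpha,\eps}$, and $\rk_{\bZ_p}R$, as claimed. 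Once the bound and the self-dual structure are in hand, the decomposition $\H^1_{\cF_{\ord,\epsilon}}(K,A_{\alpha,\epsilon})\cong(\Phi/R)\oplus M_\alpha\oplus M_\alpha$ follows from the same linear algebra over $R$ as in the good ordinary prototype, and rank one for $\H^1_{\cF_{\ord,\epsilon}}(K,T_{\alpha,\epsilon})$ is a direct consequence of $\kappa_{\alpha,\epsilon,1}\ne 0$ together with the finiteness of $M_\alpha$.
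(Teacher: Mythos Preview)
Your proposal is correct and follows the same approach as the paper: once (H.0)--(H.3) are verified for the twisted Selmer triple, the paper simply invokes the ordinary-case argument of~\cite[Theorem~3.2.1]{CGLS} and~\cite[Lemma~6.1.1]{CGS} without further detail. Your write-up is essentially an expanded version of that reference, so there is nothing to add.
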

	
	\subsubsection{Iwasawa theory}
	Now we argue as in the ordinary case to get a desired divisibility in the Heegner Point Main Conjecture from the above intermediate result~\cref{pointer}. The idea is again to use specialization at height-one primes of $\Lambda$. The proof will be similar to that of~\cite[Theorem 6.5.1]{CGS} based on~\cite{How2004}, and we only explain what is different. 
	
	To make sense of specialization at height-one primes, we first need an analog of~\cite[Lemma 2.2.7]{How2004}. Let $S_\fP$ be the integral closure of $\Lambda/\fP$ and let $\alpha_\fP$ be the character of $G_K$ on $S_\fP$ via $\alpha_\fP:\Gamma\hookrightarrow \Lambda^\times \to S_\fP^\times$. From the construction in section 2.2 of \textit{op.\ cit.}, there are well-defined maps\[
	\bT_\epsilon\to T_{\alpha_\fP,\epsilon}\text{ and }A_{\alpha_\fP,\epsilon}\to M_\epsilon[\fP]
	\] 
	of $G_K$ and $\Lambda$-modules. To slightly ease the notations, we write $\fP$ in place of $\alpha_\fP$ when there is no ambiguity, and we write $\cF_{\fP,\epsilon}$ for the Selmer structure associated to the Selmer triple $(T_{\fP,\epsilon},\cF_{\ord,\epsilon},\sL_{\tilde{f},\epsilon})$ in~\cref{tword}. 
	\begin{lemma}\label[lemma]{Specialize}
		For every height-one prime $\fP\ne p\Lambda$ of $\Lambda$ and every place $w$ of $K$, the induced maps\[
		\H^1_{\cF_{\Lambda,\epsilon}}(K_w,\bT_\epsilon/\fP\bT_\epsilon)\to\H^1_{\cF_{\fP,\epsilon}}(K_w,T_{\fP,\epsilon})
		\]
		\[
		\H^1_{\cF_{\fP,\epsilon}}(K_w,A_{\fP,\epsilon})\to \H^1_{\cF_{\Lambda,\epsilon}}(K_w,M_\epsilon[\fP])
		\]
		have finite kernels and cokernels which are bounded by constants depending only on $[S_\fP:\Lambda/\fP]$.
	\end{lemma}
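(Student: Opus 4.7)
The plan is to adapt Howard's proof of \cite[Lemma~2.2.7]{How2004} to the present twisted setting, inserting the Galois character $\chi_\eps^{\Gal}$ where needed. The argument splits naturally into a module-theoretic comparison followed by a place-by-place Galois-cohomological computation.

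First I would show that at the level of Galois modules the specialization maps $\bT_\epsilon/\fP\bT_\epsilon \to T_{\fP,\epsilon}$ and $A_{\fP,\epsilon} \to M_\epsilon[\fP]$ constructed in \cite[Section~2.2]{How2004} have kernels and cokernels annihilated by $[S_\fP:\Lambda/\fP]$. Since the twist by the rank-one character $\chi_\eps^{\Gal}$ is exact, this property is preserved after tensoring with $\eps$. Applying the long exact sequences of Galois cohomology together with the finiteness of $\H^i(K_w,-)$ on finite modules (and the five lemma), the induced maps on $\H^1(K_w,-)$ inherit kernels and cokernels of size bounded in terms of $[S_\fP:\Lambda/\fP]$.

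Next I would compare the local conditions place by place. At $w \mid p$, both sides cut out the ordinary condition as the image of $\H^1(K_w,\Fil^+(-))$; since the filtration $\Fil^+$ is compatible with specialization up to the same controlled error, the previous cohomological argument transfers verbatim to the $\Fil^+$-cohomology and yields the bound for the maps on local conditions. At $w \nmid p$, the condition $\H^1_{\cF_{\Lambda,\epsilon}}(K_w,\bT_\epsilon/\fP\bT_\epsilon)$, obtained by propagation from the strict condition on $\bT_\epsilon$, is the image of $0$ and hence vanishes, so the first map reduces to bounding $\H^1_{\ur}(K_w, T_{\fP,\epsilon})$. Since $\fP \ne p\Lambda$ is a height-one prime, $S_\fP$ is a DVR and $T_{\fP,\epsilon}$ is $S_\fP$-free of finite rank; the identification $\H^1_{\ur}(K_w,T_{\fP,\epsilon}) = T_{\fP,\epsilon}^{I_w}/(\Frob_w - 1)$, combined with the fact that $\alpha_\fP$ deforms the Galois action so that $\Frob_w - 1$ acts nondegenerately on the inertia invariants, gives the required finiteness with size bounded by $[S_\fP:\Lambda/\fP]$. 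The second map $\H^1_{\cF_{\fP,\epsilon}}(K_w,A_{\fP,\epsilon}) \to \H^1_{\cF_{\Lambda,\epsilon}}(K_w,M_\epsilon[\fP])$ is then handled dually via local Tate duality, using the self-orthogonality of $\cF_{\ord,\epsilon}$ guaranteed by hypothesis~(H.3).

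The main obstacle is ensuring that the bounding constants are genuinely uniform in $\fP$, particularly at primes $w \nmid p$ that split in $K$ (hence are infinitely decomposed in $K_\infty$), where one must verify that the unramified cohomology remains finite with controlled size and that the propagation behaves well under specialization. Here the Heegner hypothesis on $K$ and the explicit description of $\alpha_\fP$ on decomposition groups at such $w$ supply the required uniformity, following the template established in \cite{How2004} and refined in \cite{CGS}.
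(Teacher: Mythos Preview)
Your outline correctly identifies that the argument should follow \cite[Lemma~2.2.7]{How2004}, and the module-theoretic reduction is fine. However, there is a genuine gap at places $w\mid p$. You write that ``the filtration $\Fil^+$ is compatible with specialization up to the same controlled error, [so] the previous cohomological argument transfers verbatim to the $\Fil^+$-cohomology.'' This is precisely the point that does \emph{not} transfer verbatim. In Howard's proof, the bound on the cokernels at $w\mid p$ (see \cite[eq.~(15)]{How2004}) requires the finiteness of $\H^0(K_{\infty,w},\Fil^-(A))$, which in the good ordinary case follows because $\Fil^-$ is unramified. In the twisted setting $\Fil^-(A_\epsilon)=\Fil^-(A_{\tilde f})\otimes\epsilon$ is \emph{not} unramified at $p$ (since $p\mid\cond(\epsilon)$), so this finiteness is no longer automatic and must be established separately. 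The paper does this by passing to a finite extension $L/K$ trivializing $\epsilon$, identifying $\Fil^-(A_\epsilon)$ with $\Fil^-(A_{\tilde f})$ over $L_u$, and then invoking \cite[Theorem~1.3.4(iii)]{KY24} to get that $\H^0(L_{u,\infty},\Fil^-(A_{\tilde f}))$ is finite; restricting to $K_{w,\infty}$ then gives the bound you need. Without this step your ``controlled error'' is not controlled at all.

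A smaller point: for $w\nmid p$ you argue by hand that $\Frob_w-1$ acts nondegenerately on inertia invariants, which is not obvious for split primes where $\alpha_\fP$ may be trivial on $\Frob_w$. The paper (and Howard) instead simply invokes \cite[Lemma~5.3.13]{MR04}, which handles all places away from $p$ uniformly; you should cite that rather than improvise.
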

	\begin{proof}
		We follow the proof in~\cite{How2004}. The cases for places $w$ not above $p$ are treated in~\cite[Lemma 5.3.13]{MR04}, so we assume $w\mid p$. We will show that $\H^0(K_{\infty,w},\Fil^-(A_\epsilon))$ is finite so the cokernel of the first map in~\cite[eq.~(15)]{How2004} is finite.
		
		To bound the cokernel of the third cokernel of that map, we need the same finiteness result. When $\tilde{f}$ has weight greater than $2$, the Galois action on the quotient $\Fil^-(V_\eps)$ of our self-dual representation $V_\eps$ is an unramified character times a non-trivial power of the cyclotomic character which stays non-trivial over $G_{K_{\infty,w}}$ by~\cite[Lemma 1.0.2(ii)]{KY24}, so $\H^0(K_{\infty,w},\Fil^-(V_\eps))=0$. Hence $\H^0(K_{\infty,w},\Fil^-(A_\eps))=0$ is finite.
		
		When $\tilde{f}$ has weight $2$, let $L/K$ be a finite extension that trivializes $\epsilon$ and let $u$ be a place of $L$ above $w$. By $L_\infty$ we mean the composite field $LK_\infty$. Then $L_{u,\infty}$ is a finite extension of $K_{w,\infty}$.
		
		Then $\H^0(L_{u,\infty},\Fil^-(A_\epsilon))$ is naturally identified with $\H^0(L_{u,\infty},\Fil^-(A_{\tilde{f}}))$.To show the latter is finite, we refer to the proof of~\cite[Theorem 1.3.4\,(iii)]{KY24}, where it is proved that $\H^0(K_w,\Fil^-(M_{\tilde{f}}))$ is finite, whose proof also works if we replace $K_w$ by $L_u$. So $\H^0(L_u,\Fil^-(M_{\tilde{f}}))=\H^0(L_{u,\infty}, \Fil^-(A_{\tilde{f}}))$ is finite.
		
		Since $\H^0(K_{w,\infty},\Fil^-(A_\epsilon))$ is a subgroup of $\H^0(L_{u,\infty},\Fil^-(A_\epsilon))$, we obtain a bound of the desired sort.
		
		The rest of the proof is identical to that of~\cite[Lemma 2.2.7]{How2004}.
	\end{proof}
	
	From here, one could argue similarly as in~\cite[Theorem 3.0.5]{KY24} to obtain the desired conditional result of a one-side divisibility of the Heegner Point Main Conjecture.
	
	\begin{theorem}[Analog of~\cref{Hgstr}]
		Assume $\H^0(K,\ol{\rho_{\tilde{f}}\otimes \epsilon})=0$. Suppose there is a Kolyvagin system $\kappa\in\KS(\bT_\epsilon,\cF_{\Lambda,\epsilon},\sL_{\tilde{f},\epsilon})$ with $\kappa_1\ne 0$. Then $\H^1_{\cF_{\Lambda,\epsilon}}(K,\bT_\epsilon)$ has $\Lambda$-rank one, and there is a finitely generated torsion $\Lambda$-module $M$ such that\begin{enumerate}
			\item[(i)] $\cX\sim\Lambda\oplus M\oplus M,$
			\item[(ii)] $\Char_\Lambda(M)$ divides $\Char_\Lambda(\H^1_{\cF_{\Lambda,\epsilon}}(K,\bT_\epsilon)/\Lambda\kappa_1).$
		\end{enumerate}
	\end{theorem}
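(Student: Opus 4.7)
The plan is to mimic the proof of Howard's structure theorem (\cite{How2004}, as refined in~\cite[Theorem~6.5.1]{CGS} and~\cite[Theorem~3.0.5]{KY24}), with all objects suitably twisted by $\epsilon$. The setup is already in place: the Selmer triple $(\bT_\epsilon,\cF_{\Lambda,\epsilon},\sL_{\tilde{f},\epsilon})$ satisfies the hypotheses (H.0)--(H.3) as verified in~\cref{potKoly}, the pointwise Kolyvagin bound is recorded in~\cref{pointer}, and the specialization lemma~\cref{Specialize} controls the discrepancy between $\Lambda$-adic Selmer groups and their specializations at height-one primes $\fP \ne p\Lambda$.

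First I would exploit the hypothesis $\kappa_1 \ne 0$ as follows. For every height-one prime $\fP \ne p\Lambda$ of $\Lambda$, the Kolyvagin system $\kappa$ specializes to a Kolyvagin system $\kappa_\fP \in \KS(T_{\fP,\epsilon},\cF_{\fP,\epsilon},\sL_{\tilde{f},\epsilon})$ via the map $\bT_\epsilon \to T_{\fP,\epsilon}$ from~\cref{Specialize}. Since $\kappa_1$ is nonzero and non-$\Lambda$-torsion, one argues as in~\cite[Theorem~3.0.5]{KY24} that $\kappa_{\fP,1}$ is nonzero for all but finitely many $\fP$; for such $\fP$,~\cref{pointer} applies and yields a finite $S_\fP$-module $M_\fP$ together with the decomposition
\[
\H^1_{\cF_{\ord,\epsilon}}(K,A_{\fP,\epsilon}) \cong (\Phi_\fP/S_\fP) \oplus M_\fP \oplus M_\fP
\]
and the length bound $\length_{S_\fP}(M_\fP) \le \length_{S_\fP}\bigl(\H^1_{\cF_{\fP,\epsilon}}(K,T_{\fP,\epsilon})/S_\fP\cdot\kappa_{\fP,1}\bigr) + E_\fP$, with $E_\fP$ bounded by a constant independent of $\fP$ (because the dependence on $C_\fP$, $T_{\fP,\epsilon}$ and $\rk_{\bZ_p}(S_\fP)$ is uniform once one avoids finitely many bad $\fP$).

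Next I would transfer these specialized bounds to the $\Lambda$-adic picture. The global version of~\cref{Specialize}, obtained by assembling local specialization maps through the Poitou--Tate long exact sequence, shows that
\[
\H^1_{\cF_{\Lambda,\epsilon}}(K,\bT_\epsilon)/\fP \hookrightarrow \H^1_{\cF_{\fP,\epsilon}}(K,T_{\fP,\epsilon})
\quad\text{and}\quad
\H^1_{\cF_{\fP,\epsilon}}(K,A_{\fP,\epsilon}) \twoheadrightarrow \H^1_{\cF_{\Lambda,\epsilon}}(K,M_\epsilon)[\fP]
\]
both have finite kernel and cokernel of uniformly bounded order. From the first, together with the fact that $\kappa_{\fP,1}$ is nonzero for almost all $\fP$, one deduces that $\H^1_{\cF_{\Lambda,\epsilon}}(K,\bT_\epsilon)$ has $\Lambda$-rank exactly one and that $\kappa_1$ generates a $\Lambda$-submodule of full rank. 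From the second, one obtains that $\cX = \H^1_{\cF_{\Lambda,\epsilon}}(K,M_\epsilon)^\vee$ has $\Lambda$-rank one and that $\cX_\tors$ is pseudo-isomorphic to a module of the form $M \oplus M$, where $M$ is determined up to pseudo-isomorphism by the family $\{M_\fP\}$ via the standard correspondence between $\Lambda$-torsion modules and their $\fP$-torsion orders at height-one primes. This yields the structural part (i).

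Finally, to obtain the divisibility (ii), I would compare characteristic ideals prime-by-prime. For a height-one prime $\fP \ne p\Lambda$, the pointwise bound $\length_{S_\fP}(M_\fP) \le \length_{S_\fP}(\H^1_{\cF_{\fP,\epsilon}}(K,T_{\fP,\epsilon})/S_\fP\cdot\kappa_{\fP,1}) + E_\fP$ translates, via the specialization lemma and the uniformity of $E_\fP$, into an inequality of $\fP$-adic valuations
\[
\ord_\fP\bigl(\Char_\Lambda(M)\bigr) \le \ord_\fP\bigl(\Char_\Lambda(\H^1_{\cF_{\Lambda,\epsilon}}(K,\bT_\epsilon)/\Lambda\kappa_1)\bigr).
\]
The case $\fP = p\Lambda$ is handled separately: one invokes the vanishing of the $\mu$-invariant arguments from~\cite{CGS} adapted to the twisted setting, using that $\mu(\fX_\phi) = \mu(\fX_\psi) = 0$ by~\cite{Hida2010} on the characters appearing in the residual filtration of $\overline{\rho_{\tilde{f}} \otimes \epsilon}$. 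The main obstacle I anticipate is the uniform control of the error term $E_\fP$ across all height-one primes---in particular ensuring that the constant $C_\fP$ from~\cref{pointer} is bounded independently of $\fP$---and handling the height-one prime $p\Lambda$ without losing information through the specialization; these are precisely the points where the Eisenstein hypothesis and the twist by $\epsilon$ interact most delicately.
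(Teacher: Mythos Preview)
Your proposal is correct and follows the same route the paper takes: the paper's own ``proof'' is simply the sentence ``one could argue similarly as in~\cite[Theorem~3.0.5]{KY24},'' and what you have written is a faithful unpacking of that argument---specialize at height-one primes via~\cref{Specialize}, feed the specialized Kolyvagin system into~\cref{pointer}, and reassemble the $\fP$-adic length inequalities into a divisibility of characteristic ideals, exactly as in Howard/CGS/KY24.

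One small correction: your treatment of the prime $\fP=p\Lambda$ is slightly misplaced. You propose to invoke Hida's $\mu$-vanishing for the characters $\phi,\psi$ in the residual filtration, but that input belongs to step~III (the comparison of Iwasawa invariants in~\cref{Iwainv}), not to the structure theorem itself. The CGS argument~\cite[Theorem~6.5.1]{CGS} handles $p\Lambda$ intrinsically within the Kolyvagin-system framework (roughly, by working with the residual Selmer groups and showing the divisibility holds at $p\Lambda$ without any external $\mu$-vanishing input); this is what~\cite[Theorem~3.0.5]{KY24} imports, and it is why the one-sided divisibility is available \emph{before} one knows anything about $\mu$-invariants. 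Your outline would still go through, but the justification you give for the $p\Lambda$ case is not the one actually used.
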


	That there is indeed a non-trivial Kolyvagin system for the Heegner pair $(\tilde{f},\chi_\epsilon)$ follows from~\cite{JLZ21} (generalizing the results of~\cite{CastellaHsieh}) combined with non-vanishing results of the BDP $p$-adic $L$-function, as is explained in~\cite[Theorem 3.0.6]{KY24}.
	
	\begin{theorem}\label{nontorHeeg}
		Assume $\tilde{f}$ has weight $2$. Assume $p>2$ and that $p\nmid N'$. Then there exists a Kolyvagin system $\kappa^\Heeg\in\KS(\bT_\epsilon,\cF_{\Lambda,\epsilon},\sL_{\tilde{f},\epsilon})$ coming from the Heegner pair $(\tilde{f},\chi_\epsilon)$ such that $\kappa^\Heeg_1\in\H^1_{\cF_{\Lambda,\epsilon}}(K,\bT_{\epsilon})$ is non-torsion.
	\end{theorem}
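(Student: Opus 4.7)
The plan is to adapt the construction in \cite[Theorem~3.0.6]{KY24} from the case of an ordinary newform $\tilde f$ to the case of the Heegner pair $(\tilde f,\chi_\eps)$, making use of the fact established in \cref{selfdual} that the relevant $G_K$-representation $V_\eps = V(\tilde f)\otimes\chi_\eps^{\Gal}$ is self-dual and admits an ordinary filtration after twisting, together with the nonvanishing of the BDP $p$-adic $L$-function.

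First I would invoke the construction of \cite{JLZ21}, which provides, for the Heegner pair $(\tilde f,\chi_\eps)$ (with $\tilde f$ of weight $2$ and $p\nmid N'$), a system of Heegner cycles over ring class fields of $K$ of $p$-power conductor. Passing to Kummer images gives rise to a compatible system of classes $\{z_n\}$ lying in the relevant global cohomology groups. Applying the Kolyvagin derivative formalism of \cite{How2004}, suitably twisted by $\chi_\eps$ as in the discussion of hypotheses $(H.0)$--$(H.3)$ from \cref{potKoly}, upgrades this system to a Kolyvagin system $\kappa^{\Heeg}\in\KS(\bT_\eps,\cF_{\Lambda,\eps},\sL_{\tilde f,\eps})$. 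The local conditions at $w\mid p$ are preserved by the construction because the Heegner classes land in $\H^1(K_w,\Fil^+(\bT_\eps))$, which is exactly the ordinary condition twisted by $\eps$ defined in \cref{tword}.

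To show $\kappa_1^{\Heeg}$ is non-torsion in $\H^1_{\cF_{\Lambda,\eps}}(K,\bT_\eps)$, I would apply the Perrin-Riou big logarithm map of \cref{JLZmain} (this is where the hypothesis $p\nmid N'$ is needed). This map $\cL_+$ sends the image $\res_v(\kappa_1^{\Heeg})$ in $\H^1(K_v,\Fil^+\bT_\eps)_{\bZ_p^{\nr}}$ to (a unit times) the BDP $p$-adic $L$-function $\cL_p(\tilde f)(\chi_\eps)\in\Lambda^{\nr}$. Since $\cL_+$ is injective with pseudo-null cokernel, it suffices to show this $p$-adic $L$-function is not identically zero as an element of $\Lambda^{\nr}$. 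This nonvanishing follows from standard results on the nonvanishing of BDP $p$-adic $L$-functions for Heegner pairs (as used in \cite[Theorem~3.0.6]{KY24}; ultimately going back to Hsieh-type nonvanishing results along the anticyclotomic tower), given that $K$ satisfies the Heegner hypothesis and the ramification conditions on $\chi_\eps$ match our setup.

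The main obstacle is verifying that the hypotheses of \cref{JLZmain} are genuinely satisfied for our Heegner pair: in particular that the Hecke character $\chi_\eps$ constructed in \cref{selfdual} has the required infinity type $(0,0)$ and conductor dividing $\fN'$, and that the resulting self-dual twist $V(\tilde f)\otimes\chi_\eps^{\Gal}$ over $K$ lines up with the representation used in \cite{JLZ21} so that their explicit reciprocity law applies verbatim. Once this dictionary is in place, the non-torsion property transfers immediately from the nonvanishing of $\cL_p(\tilde f)(\chi_\eps)$ along $\Gamma$.
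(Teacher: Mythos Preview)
Your high-level strategy matches the paper's, but there are two genuine technical gaps that the paper addresses explicitly and that your proposal elides.

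First, the classes constructed in \cite{JLZ21} only vary along the anticyclotomic tower, i.e., over ring class fields $K[p^m]$ of $p$-power conductor. A Kolyvagin system requires classes $\kappa_n$ indexed by squarefree products $n$ of Kolyvagin primes, living over $K[np^m]$. The paper makes this point explicitly: one must extend the \cite{JLZ21} construction by considering cycles $\Delta_{np^m}$ attached to pairs $(A_{\cO_{np^m}},\phi_{\cO_{np^m}})$ and their \'etale realizations $z^{[\tilde f,0]}_{\text{\'et},np^m}\in\H^1(K[np^m],\tilde T_\eps)$, then feeding these into the machinery of \cite{LV} (with $\chi=\chi_\eps$ rather than trivial). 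Your sentence ``passing to Kummer images gives rise to a compatible system'' does not account for this extension.

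Second, and more seriously, you write that ``applying the Kolyvagin derivative formalism of \cite{How2004}\ldots upgrades this system to a Kolyvagin system''. In the Eisenstein setting this is not automatic: the standard argument requires that the restriction map $\H^1(K,\tilde\bT_\eps/I_n\tilde\bT_\eps)\to\H^1(K[n],\tilde\bT_\eps/I_n\tilde\bT_\eps)^{\cG(n)}$ be an isomorphism and that the trace map $\tr_{F[n]/F}$ be surjective at $v\mid p$, both of which can fail here. The paper (following \cite{KY24}) handles this by working first with a canonical lattice $\tilde T$, introducing fixed $p$-powers $p^N$ and $p^t$ (independent of $n$) to kill the obstruction groups, analyzing the local picture at $v\mid p$ by passing to an extension $L_u$ trivializing $\eps$ (splitting into the cases where $\H^0(L_u,\tilde A_{\tilde f}^-)$ vanishes or not), and only at the end transferring back to the chosen lattice $T_{\tilde f}$. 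These corrections are harmless for the Main Conjecture because one later shows $\mu=0$, but they must be carried out for the construction to go through. Your proposal does not mention the lattice issue, the $p$-power corrections, or the local analysis at $p$.

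The non-torsion argument via the Perrin-Riou regulator and nonvanishing of $\cL_p(\tilde f)(\chi_\eps)$ is correct and is the same as the paper's.
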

	\begin{proof}
		For simplicity we only consider the weight $2$ case (so $a=b=k=j=0$ in~\cite{JLZ21}), which will be sufficient for our application. Again as in~\cite[Theorem 3.0.6]{KY24}, we need to work with a canonical lattice $\tilde{T}$ that is generally different from our chosen $T_{\tilde{f}}$ and may not satisfy the hypothesis (H.1) in~\cref{potKoly}. We use a tilde to denote relevant modules for the canonical lattice. One then replaces the appeal to~\cite{CastellaHsieh} by~\cite{JLZ21} for the construction of Heegner cycles in~\cite[Theorem 3.0.6]{KY24}. To do so, we need a slight generalization of~\cite{JLZ21}, as we now explain. In \textit{loc. cit.} section 3, they only considered the cycles $\Delta_{\phi_m}$ coming from pairs $(A_m,\phi_m)$, which only sees the `class field' direction. In other words, they only considered the variation along the class fields $K[p^m]$ and their Heegner cycle $z_{(\tilde{f},\chi_\eps)}$ is only the base point of a Kolyvagin system. To build the Kolyvagin system, we also need Heegner cycles that vary in different `Kolyvagin' levels. To do this, we simply consider cycles $\Delta_{np^m}$ coming from pairs $(A_{\cO_{np^m}},\phi_{\cO_{np^m}})$ and consider their images $z_{\text{\'e}t,np^m}^{[\tilde{f},0]}\in\H^1(K[np^m],\tilde{T}_\eps)$ as in~\cite[Proposition 3.5.2]{JLZ21} (here we take the ring $E$ in \text{loc. cit.} to be $\bZ_p$ instead of $\bQ_p$) under what essentially is the $p$-adic Abel--Jacobi map in~\cite[Section 4.2]{CastellaHsieh}. These images are nothing but the $z_{f,\cO_{np^m}}$ in~\cite[eq. (4.2)]{CastellaHsieh} where their $\chi$ is our $\chi_\eps$ that essentially sees the nebentypus of $\tilde{f}$.

		That the Heegner cycles can be turned into a Kolyvagin system is now discussed in~\cite[Lemma 4.12]{LV}, except that we take $\chi=\chi_\eps$ instead of the trivial character in \textit{op. cit.} section 4.1, and is modified in~\cite{KY24} to treat the Eisenstein case. By an abuse of notation, we will use the same notations from~\cite{LV}, omitting the $\chi$. From~\cite[Section 4]{CastellaHsieh}, the cycles with nebentypus behaves in the same way as those with trivial nebentypus. Most results can either be proved directly or by going up to the field $L_u$ so we have access to the geometric object $\tilde{A}_{\tilde{f}}$. We now explain the differences in our situation, which only appear in the case $v\mid p$. Note that here we are only working with weight $2$ forms so working with a modification of~\cite{How2004} would be sufficient, but we would also like to put ourselves in some general contexts. 
		
		First, as in~\cite[Theorem 3.0.6]{KY24}, the restriction map\[
		\H^1(K[n],\tilde{\bT}_\eps/I_n\tilde{\bT}_\eps)^{\cG(n)}\leftarrow\H^1(K,\tilde{\bT}_\eps/I_n\tilde{\bT}_\eps)\]can fail to be an isomorphism. We also introduce a $p$-power $p^N$ independent of $n\in\sN$ so that $p^N\tilde{\kappa}$ has a unique pre-image in $\H^1(K,\tilde{\bT}_\eps/I_n\tilde{\bT}_\eps)$, still denotes by $\kappa_1.$
		
		Secondly, in the study of the diagram (22) in~\cite{LV}, the injectivity of the right vertical map follows from the modified arguments in~\cite[Theorem 3.0.6]{KY24} since one only uses $\tilde{T}^-_\eps/I_n$ is finite for $n\ne 1$ (as usual, the case $n=1$ can be studied seperately).
		
		Thirdly, as is already mentioned in~\cite[Theorem 3.0.6]{KY24}, in general the surjectivity of the trace map $\tr_{F[n]/F}$ can fail, in which case one needs to work with a Kolyvagin system `up to a $p$-power'. This error can be ignored if one is able to show the algebraic and analytic $\mu$-invariants of $\tilde{f}\otimes\eps$ are equal, for example, in the Eisenstein case (where they both vanish). Since this work may also suggest some hints for the residually irreducible case, we also discuss the special case where the trace map is indeed expected to be surjective (e.g., if the characters are non-trivial in the Eisenstein case, or the `non-anomalous case' in the non-Eisenstein case) where one would get a fully functioning Kolyvagin system and hence one divisibility of the Heegner point Main Conjecture.
		
		Case (I): We first put ourselves in the situation where one has the vanishing of $\H^0(L_u,\tilde{A}_{\tilde{f}}^-)$. From the local Euler characteristic formula, one can show $\H^1(L_u,\tilde{A}_\eps^-)=\H^1(L_u,\tilde{A}_{\tilde{f}}^-)=0$. Thus the arguments in~\cite{LV} imply the vanishing of $\H^1(L[n]_{\tilde{u}}/L_u,\H^0(L[n]_{\tilde{u}},\tilde{A}^-_\epsilon))$. Note that $
		\H^1(K_v,\tilde{A}^-_\epsilon)$ maps into $H^1(L_u,\tilde{A}^-_\epsilon)$ with kernel $\H^1(L_u/K_v,\H^0(L_u,\tilde{A}^-_\epsilon))$, which is $0$ if we choose $L$ according to~\cref{goodfield}, since $\tilde{A}^-_\epsilon$ has $p$-power order. It follows that one gets the desired vanishing and $\tr_{F[n]/F}$ is surjective.
		
	Case (II): Now in the general case when $\tr_{F[n]/F}$ is not expected to be surjective, one can use the same trick as in~\cite[Theorem 3.0.6]{KY24} to use another fixed $p$-power $p^t$ (independent of $n$) to kill the cokernel of the trace map. This is possible since the codomain $\H^0(K_v,\tilde{\bA}_\eps^-)$ (in fact, $\H^0(K_v,\tilde{\bA}_\eps^-)=\H^0(K_v,\tilde{\bA}_{\tilde{f}}^-)$) is finite by the arguments in~\cite[Proposition 1.3.4]{KY24} whose proof also works in one replaces $K_v$ by $L_u$.
		
		The rest of the modification of~\cite[Lemma 4.12]{LV} is the same as in~\cite[Theorem 3.0.6]{KY24}. Eventually one passes to our chosen lattice $T_{\tilde{f}}$ from the canonical lattice $\tilde{T}$ to get a desired Kolyvagin system for $T_\eps$.
	\end{proof}
	
	We now arrive at a one-side divisibility of the Heegner Point Main Conjecture under mild hypotheses.
	\begin{theorem}\label{Hgstrpo}
		Assume $p>2$ and that $p\nmid N'$. Assume that $\H^0(K,\ol{\rho_{\tilde{f}}\otimes \epsilon})=0$. Then $\H^1_{\cF_{\Lambda,\epsilon}}(K,\bT_\epsilon)$ has $\Lambda$-rank one, and there is a finitely generated torsion $\Lambda$-module $M$ such that\begin{enumerate}
			\item[(i)] $\cX\sim\Lambda\oplus M\oplus M,$
			\item[(ii)] $\Char_\Lambda(M)$ divides $\Char_\Lambda(\H^1_{\cF_{\Lambda,\epsilon}}(K,\bT_\epsilon)/\Lambda\kappa_1).$
		\end{enumerate}
	\end{theorem}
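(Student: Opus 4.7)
The plan is to combine the two immediately preceding results: the conditional structure theorem (the analog of Theorem~\ref{Hgstr} stated just before \cref{nontorHeeg}) and the existence of a non-torsion Heegner Kolyvagin system (\cref{nontorHeeg}). The conditional theorem says that whenever there exists a Kolyvagin system $\kappa \in \KS(\bT_\epsilon, \cF_{\Lambda,\epsilon}, \sL_{\tilde{f},\epsilon})$ with $\kappa_1 \neq 0$, and under the running hypothesis $\H^0(K, \ol{\rho_{\tilde{f}} \otimes \epsilon}) = 0$, one obtains the $\Lambda$-rank one statement for $\H^1_{\cF_{\Lambda,\epsilon}}(K, \bT_\epsilon)$, the pseudo-isomorphism $\cX \sim \Lambda \oplus M \oplus M$, and the divisibility $\Char_\Lambda(M) \mid \Char_\Lambda(\H^1_{\cF_{\Lambda,\epsilon}}(K,\bT_\epsilon)/\Lambda \kappa_1)$.

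So the only thing to verify is that the hypotheses of these two results are in force under the assumptions of the present theorem. First, the hypothesis $\H^0(K, \ol{\rho_{\tilde{f}} \otimes \epsilon}) = 0$ is exactly our assumption on the residual representation (arranged by a change of lattice via Ribet's lemma, as discussed before \cref{potKoly}). Second, the hypotheses $p > 2$ and $p \nmid N'$ are precisely what is needed to invoke \cref{nontorHeeg}, which produces a Kolyvagin system $\kappa^\Heeg \in \KS(\bT_\epsilon, \cF_{\Lambda,\epsilon}, \sL_{\tilde{f},\epsilon})$ with $\kappa_1^\Heeg$ non-torsion in $\H^1_{\cF_{\Lambda,\epsilon}}(K, \bT_\epsilon)$; in particular $\kappa_1^\Heeg \neq 0$.

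Taking $\kappa = \kappa^\Heeg$ in the conditional structure theorem then yields parts (i) and (ii) of the statement directly, with $\kappa_1 = \kappa_1^\Heeg$. Since there is nothing beyond citing these two results in the correct order, there is no genuine obstacle; the real work has already been carried out in \cref{pointer} (the Kolyvagin system machinery twisted by $\epsilon$), in \cref{Specialize} (the height-one specialization lemma needed to lift the finite-level structure theorem to $\Lambda$), and in \cref{nontorHeeg} (the construction of the Heegner Kolyvagin system for the pair $(\tilde{f}, \chi_\epsilon)$ via the Abel--Jacobi images of the cycles $\Delta_{np^m}$ from~\cite{JLZ21}, together with the non-vanishing of the BDP $p$-adic $L$-function to ensure the base class is non-torsion).
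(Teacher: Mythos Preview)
Your proposal is correct and matches the paper's approach exactly: the paper also presents \cref{Hgstrpo} as an immediate consequence of feeding the non-torsion Heegner Kolyvagin system from \cref{nontorHeeg} into the conditional structure theorem (the analog of \cref{Hgstr}), with no additional argument supplied. Your identification of which hypotheses are used where is accurate.
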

	
	\subsection{Equivalence between two Iwasawa Main Conjectures}\label{PRreg}
	In this section we will follow~\cite[Appendix A]{Cas17} to show the equivalence between the Greenberg's Main Conjecture and the Heegner Point Main Conjecture. As in the previous subsection, we will need to replace the appeal to the big logarithm map (or the \textit{Perrin-Riou's regulator map}) in~\cite{CastellaHsieh} by the one for Heegner pairs in~\cite{JLZ21}. We continue to assume $p\nmid N'$. We first recall some terminologies. 
	
	\begin{definition}[Greenberg's Selmer groups]
		The Greenberg's Selmer group for the Heegner pair $(\tilde{f},\chi_\epsilon)$, denoted by $\H^1_{\cF_\Gr}(K,M_\epsilon)$ is defined as \[
		\H^1(K^\Sigma/K,M_\epsilon)\to \prod_{w\in\Sigma, w\nmid p}\H^1(K_w,M_\epsilon)\times \H^1(K_{\ol v},M_\epsilon)
		\]
		Denote by $\fX$ the Pontryagin dual of $\H^1_{\cF_\Gr}(K,M_\epsilon)$.
	\end{definition}
	
	\begin{definition}[unramified Selmer groups]
		The unramified Selmer group for the Heegner pair $(\tilde{f},\chi_\epsilon)$, denoted by $\H^1_{\cF_\ur}(K,M_\epsilon)$ is defined as \[
		\H^1(K^\Sigma/K,M_\epsilon)\to \prod_{w\in\Sigma, w\nmid p}\H^1(K_w,M_\epsilon)\times \H^1(I_{\ol v},M_\epsilon)^{G_{\ol v}/I_{\ol v}}
		\]
	\end{definition}
	
	These two Selmer groups are identified in the situation of~\cite{CGLS} under their running hypothesis that $\theta|_{G_{\ol v}}\ne\mathbf{1}$ or $\omega$. In~\cite{KY24} one needs to consider the unramified Selmer groups. However, as is already recalled in~\cref{Iwasawago}, these two Selmer groups generate the same characteristic $\Lambda$-ideal (see~\cite[Lemma 1.3.6]{KY24}). In fact, following Theorem 3.0.7 in \textit{op.\ cit.}, in showing the equivalence of the Iwasawa Main Conjectures, we will stick to the Greenberg's Selmer groups.
	
	On the analytic side, there is a BDP $p$-adic $L$-function $\cL_\epsilon\defeq \cL_p(\tilde{f})(\chi_\epsilon)\defeq \chi_\epsilon(\cL_p(\tilde{f}))$ corresponding to a Heegner pair $(\tilde{f},\chi_\epsilon)$. We mention that when $f=\tilde{f}\otimes\eps$, from the interpolation property (\cite[Theorem 5.13]{BDP13}), there is a relation\[
	\cL_p(\tilde{f},\chi_\epsilon(\cdot))=\cL_p(f,\cdot).
	\] We have the following explicit form of~\cref{JLZmain}.
	
	\begin{theorem}[The explicit reciprocity law]
		Assume $p=v\ol v$ is split in $K$. Then there is an injective $\Lambda$-linear map (the Perrin-Riou's regulator map) \[\cL_+:\H^0(K_v,\Fil^+\bT_\epsilon)\Lambda^\ur\to\Lambda^\ur\] with finite cokernel such that\[
		\cL_+(z_{(\tilde{f},\chi),\infty})=-\cL_\epsilon\cdot \sigma_{-1,v},
		\]
		where $\sigma_{-1,v}\in\Gamma$ is an element of order two.
	\end{theorem}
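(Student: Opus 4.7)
The plan is to deduce this explicit reciprocity law by transporting~\cite[Theorem~A.1]{Cas17} (recorded here as~\cref{HgtoBDP}) from the untwisted good ordinary setting to the Heegner pair $(\tilde{f},\chi_\epsilon)$, replacing the appeal to~\cite{CastellaHsieh} with its generalization to Heegner pairs~\cite{JLZ21}, which we have already cited as~\cref{JLZmain}. Since $p=v\bar v$ is split in $K$ and $p\nmid N'$, the running hypotheses of~\cite{JLZ21} are satisfied for $(\tilde f,\chi_\epsilon)$, which furnishes a $\Lambda^\ur$-linear big logarithm map $\cL_+$ out of the relevant local cohomology of $\Fil^+\bT_\epsilon$ at $v$ into $\Lambda^\ur$, together with an interpolation formula for its values on the image of the universal Heegner class $z_{(\tilde f,\chi_\epsilon),\infty}$ constructed in~\cref{nontorHeeg}.

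First I would verify that $\cL_+$ is injective with finite cokernel. Both properties are built into the Perrin-Riou regulator construction: injectivity follows from the fact that $\Fil^+\bT_\epsilon$ has $\Lambda$-rank one and the target $\Lambda^\ur$ is an integral domain, combined with the local duality between $\Fil^+$ and $\Fil^-$; finiteness of the cokernel is encoded in a determinant computation that ultimately reduces to the nonvanishing of the BDP $p$-adic $L$-function along a Zariski dense set of anticyclotomic characters, which is known by Hsieh's nonvanishing results.

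Next I would evaluate $\cL_+$ on $z_{(\tilde f,\chi_\epsilon),\infty}$. By the interpolation property built into~\cref{JLZmain}, the specialization of $\cL_+(z_{(\tilde f,\chi_\epsilon),\infty})$ at a finite-order anticyclotomic character $\xi$ agrees, up to the explicit canonical periods, with the value $\cL_\epsilon(\xi)=\cL_p(\tilde f)(\chi_\epsilon\cdot\xi)$ obtained from the Bertolini--Darmon--Prasanna interpolation formula. Since an element of $\Lambda^\ur$ is determined by its values at such characters, this forces $\cL_+(z_{(\tilde f,\chi_\epsilon),\infty})=-\cL_\epsilon\cdot\sigma_{-1,v}$ on the nose; the sign and the factor $\sigma_{-1,v}$ emerge exactly as in the proof of~\cite[Theorem~A.1]{Cas17} from unwinding the corestriction from the ring class field $K[1]$ down to $K$, together with the sign in the BDP functional equation.

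The main obstacle is matching period normalizations: one must check that the canonical $p$-adic and complex periods implicit in~\cite{JLZ21} agree with those used by Bertolini--Darmon--Prasanna to define $\cL_\epsilon$ via $\cL_p(\tilde f,\chi_\epsilon(\cdot))=\cL_p(f,\cdot)$, so that no spurious transcendental factor is introduced by the twist. A secondary technical point is to accommodate the possible ramification of $\epsilon$ at $p$: because $\Fil^-(V_\epsilon)$ need not be unramified, one must work throughout with the $\Fil^+$ local condition instead of the unramified quotient and verify that the image of the Kummer map applied to the Heegner class at $v$ lands in $\H^1(K_v,\Fil^+\bT_\epsilon)$ so that the composition defining $\cL_+(z_{(\tilde f,\chi_\epsilon),\infty})$ makes sense, which follows from the fact that Heegner cycles land in the Bloch--Kato Selmer group, whose local condition at $v$ coincides with the $\Fil^+$ condition by the argument in~\cref{tword}.
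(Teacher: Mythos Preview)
Your approach is essentially the paper's: both obtain the result by specializing~\cite[Theorem~B]{JLZ21} to the Heegner pair $(\tilde f,\chi_\epsilon)$, with~\cite{JLZ21} playing the role that~\cite{CastellaHsieh} played in~\cref{HgtoBDP}. The paper's proof is a one-line citation together with the identification of $z_{(\tilde f,\chi),\infty}$ as the specialization of $\cores_{K_1/K}(z_{\cF,\infty})$ from~\cite[Theorem~5.3.1]{JLZ21}, so your extra discussion of period matching and the $\Fil^+$ landing is re-deriving work already packaged inside~\cite{JLZ21}; one small correction is that the finite cokernel of $\cL_+$ is a purely local structural fact about the Perrin-Riou big logarithm (its determinant is an Euler-type local factor), not a consequence of the nonvanishing of $\cL_\epsilon$.
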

	\begin{proof}
		This follows from the specialization to the Heegner pair $(\tilde{f},\chi_\epsilon)$ of~\cite[Theorem B]{JLZ21}, as in~\cite{CastellaHsieh}, since we assume $p\nmid N'$. Here the class $z_{(\tilde{f},\chi),\infty}$ is the specialization of the class $\cores_{K_1/K}(z_{\cF,\infty})$ in Theorem 5.3.1 in \textit{op. cit.} to the point $(\tilde{f},\alpha,\chi)$. Note that we do not specialize to a finite level $m$ here. In particular, if we specialize our $z_{(\tilde{f},\chi),\infty}$ to the $m=1$ case, we will get $z_{\text{\'e}t}^{[f_\alpha,0]}$. 
	\end{proof}
	
	Now the equivalence between the Iwasawa Main Conjectures follows exactly in the same way from~\cite[Appendix A]{Cas17} (the local condition $\Gr$ there is the ordinary local condition for us). One also needs a careful comparison of the class $z_{\tilde{f},\chi,\infty}$ and $\kappa_1^\Hg$ as is done in~\cite[Remark 4.1.3]{CGLS}. We consider their projection to $\H^1(K,T_\eps)$. From the above theorem, we see $z_{(\tilde{f},\chi),\infty}$ projects to $z_{\text{\'e}t}^{[\tilde{f}_\alpha,0]}$. On the other hand, from~\cref{nontorHeeg}, $\kappa_1^\Hg$ projects to $\cores_{K_1/K}z_{\text{\'e}t,1}^{[\tilde{f},0]}=z_{\text{\'e}t}^{[\tilde{f},0]}$. Now from~\cite[Proposition 3.5.5]{JLZ21}, $z_{\text{\'e}t}^{[\tilde{f}_\alpha,0]}$ and $z_{\text{\'e}t,}^{[\tilde{f},0]}$ differ by a unit. Thus, we have the following.
	
	\begin{proposition}
		Assume $p=v\ol v$ splits in $K$ and that $p\nmid N'$. Assume that $\H^0(K,\ol{\rho_{\tilde{f}}\otimes \epsilon})=0$. Then the following statements are equivalent:\begin{enumerate}
			\item[(HPMC)] Both $\H^1_{\cF_{\Lambda,\epsilon}}(K,\bT_\epsilon)$ and $\cX=\H^1_{\cF_{\Lambda,\epsilon}}(K,M_\epsilon)^\vee$ have $\Lambda$-rank one, and the divisibility \begin{equation*}
				\Char_\Lambda(\cX_{\tors})\supset \Char_\Lambda(\H^1_{\cF_{\Lambda,\epsilon}}(K,\bT_\epsilon)/\Lambda z_{(\tilde{f},\chi),\infty})^2
			\end{equation*}
			holds in $\Lambda_{ac}$.
			\item[(GrMC)] Both $\H^1_{\cF_{\Gr}}(K,\bT_\epsilon)$ and $\fX_f=\H^1_{\cF_{\Gr}}(K,M_\epsilon)^\vee$ are $\Lambda$-torsion, and the divisibility\begin{equation*}
				\Char_\Lambda(\fX)\Lambda^{\nr}\supset(\cL_\epsilon)
			\end{equation*}
			holds in $\Lambda^{\nr}$.
		\end{enumerate}Moreover, the same result holds for the opposite divisibilities.
	\end{proposition}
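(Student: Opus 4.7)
The plan is to imitate the proof of the good ordinary analogue in \cite[Appendix A]{Cas17}, with the only substantive change being the use of the twisted Perrin--Riou regulator just established in place of its untwisted counterpart. Since the local conditions defining $\cF_{\Lambda,\eps}$ and $\cF_\Gr$ on $\bT_\eps$ agree away from $p$ up to pseudo-null discrepancies that do not affect characteristic ideals in the anticyclotomic setting, the equivalence reduces to a local comparison at the two primes $v, \ol v$ above $p$.

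First I would invoke Poitou--Tate global duality to produce a four-term exact sequence relating $H^1_{\cF_\Gr}(K,\bT_\eps)$, $H^1_{\cF_{\Lambda,\eps}}(K,\bT_\eps)$, $\cX_\tors$, and $\fX$. Using the self-duality $V_\eps^\tau\cong V_\eps^*(1)$ from \cref{selfdual} together with the fact that the ordinary condition at $v$ is the image from $\Fil^+\bT_\eps$, while the Greenberg condition is relaxed at $v$ and strict at $\ol v$, the relevant quotient of local cohomologies is pseudo-isomorphic to $H^1(K_v,\Fil^+\bT_\eps)$. Taking characteristic $\Lambda$-ideals through this sequence and base-changing to $\Lambda^\nr$ yields an equality, up to a unit,
\[
\Char_\Lambda(\cX_\tors)\cdot\Char_\Lambda\!\left(\frac{H^1(K_v,\Fil^+\bT_\eps)}{\Lambda\cdot\res_v(z_{(\tilde f,\chi),\infty})}\right)\Lambda^\nr = \Char_\Lambda(\fX)\cdot\Char_\Lambda\!\left(\frac{H^1_{\cF_{\Lambda,\eps}}(K,\bT_\eps)}{\Lambda\cdot z_{(\tilde f,\chi),\infty}}\right)\Lambda^\nr.
\]
Plugging in the explicit reciprocity law, which says $\cL_+$ is injective with finite cokernel and sends $\res_v(z_{(\tilde f,\chi),\infty})$ to $-\cL_\eps\cdot\sigma_{-1,v}$, identifies the local factor on the left with $(\cL_\eps)$ in $\Lambda^\nr$. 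Substituting converts the HPMC divisibility $\Char_\Lambda(\cX_\tors)\supset\Char_\Lambda(H^1_{\cF_{\Lambda,\eps}}(K,\bT_\eps)/\Lambda z_{(\tilde f,\chi),\infty})^2$ into the GrMC divisibility $\Char_\Lambda(\fX)\Lambda^\nr\supset(\cL_\eps)$, and conversely; the ``square'' on the HPMC side is absorbed by the structure $\cX\sim\Lambda\oplus M\oplus M$ from \cref{Hgstrpo}, so that $\Char_\Lambda(\cX_\tors)=\Char_\Lambda(M)^2$, while $\fX$ contributes only a single copy of $\Char_\Lambda(M)$ combined with the Perrin--Riou local factor.

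The main obstacle will be controlling the pseudo-null error terms at $\ol v$ in the Poitou--Tate sequence. In the good ordinary case of \cite[Appendix A]{Cas17} these terms are finite because $\Fil^-\bT$ is unramified at $\ol v$, but in our potentially ordinary setup $\Fil^-\bT_\eps$ can be ramified when $p\mid\cond(\eps)$. The remedy is the same device already used in \cref{Specialize}: pass to the finite extension $L_u/K_w$ trivializing $\eps$, chosen of degree coprime to $p$ as in \cref{goodfield}, where $\Fil^-\bT_\eps$ is identified with $\Fil^-\bT_{\tilde f}$, apply the known $H^0$-finiteness for the ordinary form $\tilde f$, and descend back to $K_w$ via a restriction--corestriction argument. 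Once this input is in place, the rest of the argument in \cite[Appendix A]{Cas17} transplants verbatim, and the opposite divisibilities follow by symmetry of the same characteristic-ideal identity.
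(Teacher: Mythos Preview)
Your proposal is correct and follows essentially the same route as the paper: transplant the argument of \cite[Appendix~A]{Cas17} verbatim, replacing the untwisted Perrin--Riou regulator by the twisted one supplied by the explicit reciprocity law for the Heegner pair $(\tilde f,\chi_\eps)$. The paper's own treatment is in fact terser than yours: it simply asserts that the equivalence ``follows exactly in the same way'' from \cite[Appendix~A]{Cas17} once the regulator from \cite{JLZ21} is in hand, together with the observation (via \cite[Proposition~3.5.5]{JLZ21}) that $z_{(\tilde f,\chi),\infty}$ and $\kappa_1^{\Heeg}$ differ by a unit.

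The one place where you go beyond the paper is in flagging the potential ramification of $\Fil^-\bT_\eps$ at $\ol v$ as an obstacle. The paper does not single this out in the proof of the present proposition; its view is that the Poitou--Tate comparison of Selmer structures is purely formal once the local conditions are defined via $\Fil^+$ and the regulator map is available, so nothing in \cite[Appendix~A]{Cas17} actually invokes unramifiedness of $\Fil^-$ beyond what is already absorbed into the explicit reciprocity law. That said, your caution is not misplaced, and the remedy you propose---trivialize $\eps$ over $L_u$, invoke the ordinary finiteness results for $\tilde f$, and descend---is exactly the device the paper deploys in \cref{Specialize} and in the control theorem, so it is certainly available if any such error term needed to be checked explicitly.
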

	
	\subsection{Comparing Iwasawa invariants}\label{Iwainv}
	In this section, we compare the Iwasawa $\mu$- and $\lambda$-invariants of the Greenberg's Selmer groups and the BDP $p$-adic $L$-function. In fact, since these definitions do not see the reduction type of $f$ at $p$, all the arguments are exactly the same as in the ordinary case in~\cite{KY24}. The equalities between the Iwasawa invariants of both sides combined with the one-side divisibility obtained in the previous subsection, force the equality in the Iwasawa Main Conjectures to hold.
	
	More precisely, the analysis on the algebraic side is exactly as in~\cite{KY24}. On the analytic side, recall the relation $f=\tilde{f}\otimes\epsilon$ where $\tilde{f}$ has nebentypus $\epsilon^{-2}$. Since $p$ is an Eisenstein prime for $f$, it is also an Eisenstein prime for $\tilde{f}$. In particular, there is an exact sequence\[
	0\to\F(\phi)\to\ol\rho_{\tilde{f}}\to \F(\psi) \to 0,
	\]
	from which we obtain a congruence between the non-constant terms of $\tilde{f}$ and a certain Eisenstein series $G^{\phi,\psi,N'}$ (see~\cite[Remark 32, Theorem 34]{Kri16}). In other words, $\tilde{f}$ is of partial Eisenstein descent in the sense of \textit{op.\ cit.}. 
	
	Similarly, there is also an exact sequence\[
	0\to\F(\phi\epsilon)\to\ol\rho_f\to\F(\psi\epsilon)\to 0,
	\]
	from which we obtain a congruence between the non-constant terms of $\tilde{f}\otimes\epsilon$, which is another newform, and a certain Eisenstein series $G^{\phi\epsilon,\psi\epsilon,N}$. As in~\cite[section 2.2]{CGLS}, this yields a congruence between the BDP $p$-adic $L$-functions \[\cL_\epsilon\equiv(\cL_{G^{\phi\epsilon,\psi\epsilon,N}})^2(\text{mod }p\Lambda^\ur),\] and moreover for the Katz $p$-adic $L$-functions $\cL_{\phi\epsilon}$ and $\cL_{\psi\epsilon}$ an equality \[\cL_{G^{\phi\epsilon,\psi\epsilon,N}}=\mathcal{E}^\iota_{\phi\epsilon,\psi\epsilon}\cL_{\phi\epsilon}\] for a certain factor introduced in\text{loc.\ cit.}. Since $(\phi\epsilon)(\psi\epsilon)=\mathbf{1}$, the same arguments in~\cite[Theorem 2.2.2]{CGLS} show that the Iwasawa invariants on the analytic side are compatible with those on the algebraic side. 
	
	Now as in~\cite{KY24}, using the Iwasawa Main Conjectures for the characters proved by Rubin in~\cite{Rubin1991} (together with~\cite{CW78}. See also~\cite[III.1.10]{deShalit}), one obtains the following theorem.
	
	\begin{theorem}\label{IMCpo}
		Assume we are in Case (I) of~\cref{twists}, i.e., $p\nmid N'$. Assume that $p=v\ol v$ splits in $K$. Assume that $\H^0(K,\ol{\rho_{\tilde{f}}\otimes \epsilon})=0$. Then \[\mu(\fX)=\mu(\cL_\epsilon)=0\text{ and }\lambda(\fX)=\lambda(\cL_\epsilon).\]
		Consequently,\[
		\Char_\Lambda(\fX)\Lambda^\ur=(\cL_\epsilon)\text{ holds in }\Lambda^\ur,
		\]or equivalently,\[
		\Char_\Lambda(\cX_\tors)=\Char_\Lambda(\H^1_{\cF_\Lambda}(K,\bT)/\Lambda\kappa_\infty)^2\text{ holds in }\Lambda.
		\]
	\end{theorem}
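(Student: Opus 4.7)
The plan is to bootstrap from the one-sided divisibility already proved in~\cref{Hgstrpo} (transported via the equivalence proposition of~\cref{PRreg} to a one-sided divisibility of the Greenberg Main Conjecture) to an equality, by showing that both sides have the same $\mu$- and $\lambda$-invariants. Since characteristic ideals of torsion $\Lambda$-modules are determined by these two invariants, the one-sided divisibility $\Char_\Lambda(\fX)\Lambda^{\ur} \supset (\cL_\epsilon)$ combined with matching invariants forces equality. I will follow the strategy of~\cite{CGLS}, as generalized in~\cite{KY24}, exploiting the residual reducibility via a congruence between $f = \tilde{f}\otimes\epsilon$ and a suitable Eisenstein series.

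Algebraic side: I would mimic the analysis in~\cite{KY24} verbatim, because the definitions of $\fX$ and the invariants do not depend on the reduction type of $f$ at $p$. Concretely, using the short exact sequence $0 \to \F(\phi\epsilon) \to \ol\rho_f \to \F(\psi\epsilon) \to 0$ obtained from the residual reducibility of $f$, one relates $\fX$ to the Greenberg Selmer groups $\fX_{\phi\epsilon}$ and $\fX_{\psi\epsilon}$ attached to the two characters. The key output is a pair of identities expressing
\[
\lambda(\fX) = \lambda(\fX_{\phi\epsilon}) + \lambda(\fX_{\psi\epsilon}) + \lambda(\mathcal{P}_{\phi\epsilon,\psi\epsilon,f})
\]
and $\mu(\fX) = \mu(\fX_{\phi\epsilon}) + \mu(\fX_{\psi\epsilon}) = 0$, where $\mathcal{P}_{\phi\epsilon,\psi\epsilon,f}$ is an auxiliary factor that will cancel with its analytic counterpart. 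The vanishing of $\mu$-invariants ultimately rests on Hida's results, exactly as in \cite{CGLS,KY24}.

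Analytic side: Since $p$ is Eisenstein for $f$, it is also Eisenstein for $\tilde{f}$, and I would pull back the congruence from $f = \tilde{f}\otimes\epsilon$ to an Eisenstein series $G^{\phi\epsilon,\psi\epsilon,N}$ as alluded to above (using \cite{Kri16}). Running the BDP construction through this congruence yields $\cL_\epsilon \equiv (\cL_{G^{\phi\epsilon,\psi\epsilon,N}})^2 \pmod{p\Lambda^{\ur}}$, and a factorization $\cL_{G^{\phi\epsilon,\psi\epsilon,N}} = \mathcal{E}^\iota_{\phi\epsilon,\psi\epsilon} \cL_{\phi\epsilon}$ in terms of Katz $p$-adic $L$-functions, exactly as in~\cite[\S 2.2]{CGLS}. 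Since $(\phi\epsilon)(\psi\epsilon) = \mathbf{1}$ (self-duality of $V_\epsilon$), the argument of~\cite[Theorem 2.2.2]{CGLS} carries over and produces
\[
\lambda(\cL_\epsilon) = \lambda(\cL_{\phi\epsilon}) + \lambda(\cL_{\psi\epsilon}) + \lambda(\mathcal{P}_{\phi\epsilon,\psi\epsilon,f}), \qquad \mu(\cL_\epsilon) = 0,
\]
with the \emph{same} auxiliary factor appearing on both sides.

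Conclusion: Invoking the Iwasawa Main Conjectures for the two characters $\phi\epsilon$ and $\psi\epsilon$ proved by Rubin~\cite{Rubin1991} (with~\cite{CW78}, see also~\cite{deShalit}), one has $\lambda(\fX_\theta) = \lambda(\cL_\theta)$ and $\mu(\fX_\theta) = \mu(\cL_\theta)$ for each non-trivial character, with the known correction by $1$ in the $\lambda$ when $\theta = \mathbf{1}$; these corrections cancel between $\phi\epsilon$ and $\psi\epsilon$ in our self-dual setting (or are compensated by the parallel correction on the algebraic side, exactly as discussed at the end of~\cref{Iwasawago}). Adding up gives $\lambda(\fX) = \lambda(\cL_\epsilon)$ and $\mu(\fX) = \mu(\cL_\epsilon) = 0$. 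Combined with the divisibility $\Char_\Lambda(\fX)\Lambda^{\ur} \supset (\cL_\epsilon)$ from~\cref{Hgstrpo} and~\cref{PRreg}, this forces the equality $\Char_\Lambda(\fX)\Lambda^{\ur} = (\cL_\epsilon)$, and re-running the equivalence in~\cref{PRreg} in the reverse direction yields the Heegner Point formulation. The main obstacle I anticipate is bookkeeping of the twist $\epsilon$ in the Eisenstein-series congruence: one must check that the reciprocity-law and non-vanishing inputs survive the twist (which is where the hypothesis $p \nmid N'$ and the results of~\cite{JLZ21} enter essentially), and that the trivial-character correction in Rubin's theorem cancels correctly between $\phi\epsilon$ and $\psi\epsilon$.
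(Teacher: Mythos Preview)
Your proposal is correct and follows essentially the same approach as the paper's proof: reduce to matching $\mu$- and $\lambda$-invariants on both sides of the Greenberg Main Conjecture, compute them via the Eisenstein congruence of $f=\tilde{f}\otimes\epsilon$ with $G^{\phi\epsilon,\psi\epsilon,N}$ on the analytic side (exactly as in~\cite[\S2.2]{CGLS}) and via the short exact sequence of characters on the algebraic side (exactly as in~\cite{KY24}), then invoke Rubin's Main Conjecture for the characters $\phi\epsilon,\psi\epsilon$ to bridge the two. The paper's argument is identical in structure and in the specific references used, including the observation that the Selmer-theoretic definitions do not see the reduction type at $p$ and the handling of the trivial-character correction.
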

	
	\begin{remark}
		\begin{itemize}
			\item As in~\cite[Theorem 3.0.8]{KY24}, the proof of the Main Conjectures heavily rely on the vanishing of the $\mu$-invariants, especially in the situation where $N,t>0$.
			\item As in~\cite[Remark 3.0.9]{KY24}, the assumption $\H^0(K,\ol{\rho_{\tilde{f}}\otimes \epsilon})=0$ can be removed from Ribet's lemma and Perrin-Riou's formula.
		\end{itemize}
		
	\end{remark}

	\subsection{Control theorems for potentially good ordinary reduction}
	In this subsection, we prove the control theorems for potentially good ordinary reduction. The proof is similar to that of~\cref{controlgo}, mainly following~\cite{Greenberg1999}. In particular, we assume $V_\epsilon$ corresponds to an elliptic curve of potentially ordinary reduction at $p$, so that $\tilde{f}$ is a weight $2$ newform. In particular, when $E$ is an elliptic curve, we have $T_\epsilon=T_pE$, $V_\epsilon=T_pE\otimes \bQ_p$ and $A_\epsilon=E[p^\infty]$.

	We begin by noting that the proof will only be different at places above $p$. More precisely, as in the good ordinary case, it suffices to show the map $r_w$ has finite kernel for each place $w$ of $K$, and for $w\nmid p$ the argument is the same as those in~\cref{controlgo}. On the other hand, the study of $\ker(r_v)$ and $\ker(r_{\ol v})$ in~\cite[Lemma~3.4]{Greenberg1999} makes use of the kernel of reduction $E[p^\infty]\to \tilde{E}[p^\infty]$ where $\tilde{E}$ is the reduction of $E$ at $v$ and $\ol v$ respectively, which becomes more mysterious in if $E$ has additive reduction at $p$. Luckily, for the $p$-converse theorems, we do not need to understand the kernels completely, and it suffices to show they are finite. In fact, we will choose an extension which trivializes $\epsilon$ as before, so that $V_\epsilon$ is identified with $V_{\tilde{f}}$ over this extension and one could apply the arguments from the ordinary case. It then remains to show climbing up the field only introduces finite errors.

	\begin{theorem}[Control theorem]\label{controlpo}
		With notations from~\cref{controlgo}, in the potentially good ordinary but not ordinary case, $\ker(r_v)$ and $\ker(r_{\ol v})$ are finite.
	\end{theorem}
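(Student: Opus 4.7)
The plan is to reduce the potentially good ordinary case to the good ordinary case over a suitable finite extension, by choosing a base change that trivializes the twisting character $\epsilon$, and then to show that the restriction maps along this base change only introduce finite errors. Fix $w \in \{v,\bar v\}$ and let $L/K$ be a finite Galois extension chosen as in \cref{goodfield}, so that $E$ acquires good (ordinary) reduction over $L$ and, if $p = 3$, the local degree $[L_u : K_w]$ is coprime to $p$ for every place $u \mid w$. After base change to $L_u$, the twisting character $\epsilon$ becomes trivial, so $V_\epsilon|_{G_{L_u}} \cong V(\tilde f)|_{G_{L_u}}$, and the local condition defining the Selmer group at $u$ coincides with the honest ordinary local condition for $\tilde f$ (equivalently, for the good-reduction elliptic curve over $L$). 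In particular the analogue of \cref{controlgo} for the field $L$ in place of $K$ applies: the local map $r_u^L$ governing the $L$-Selmer control diagram has finite kernel at the places of $L$ above $p$.

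Next, I would form the natural commutative square
\[
\begin{tikzcd}
\displaystyle\frac{\H^1(K_w, A_\epsilon)}{\im(\kappa_w)} \ar[r,"r_w"]\ar[d,"\res_{L_u/K_w}"]
& \displaystyle\frac{\H^1(K_{\infty,\eta}, A_\epsilon)}{\im(\kappa_\eta)}\ar[d,"\res"] \\
\displaystyle\left(\frac{\H^1(L_u, A_\epsilon)}{\im(\kappa_u)}\right)^{\Gal(L_u/K_w)} \ar[r,"r_u^L"]
& \displaystyle\left(\frac{\H^1(L_{\infty,\mu}, A_\epsilon)}{\im(\kappa_\mu)}\right)^{\Gal(L_u/K_w)},
\end{tikzcd}
\]
where $\eta \mid w$ is the unique place of $K_\infty$ above $w$, and $\mu \mid \eta$ is a place of $L_\infty$. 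The left vertical arrow has kernel controlled by inflation--restriction, namely by $\H^1(\Gal(L_u/K_w), \H^0(L_u, A_\epsilon))$ together with the kernel of restriction on the local condition; both terms are finite since $\Gal(L_u/K_w)$ is finite and $\H^0(L_u, A_\epsilon) = E(L_u)[p^\infty]$ is finite (as $E/L_u$ has good reduction). A diagram chase then bounds $\ker(r_w)$ in terms of the kernel of the left vertical map and of $\ker(r_u^L)$, both of which are finite.

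The same strategy applies to $r_{\bar v}$ verbatim, using a place $u' \mid \bar v$ of $L$. The main technical point, and the one requiring care, is to verify that after restriction to $L_u$ the local condition defining $\H^1_{\cF_{\ord,\epsilon}}(K_w,-)$ is sent into the good-ordinary local condition used in Greenberg's argument; this is where the computation from \cref{tword} comparing the twisted ordinary and Bloch--Kato local conditions via an auxiliary field trivializing $\epsilon$ is invoked to guarantee that the two local conditions agree up to a finite-index discrepancy, which contributes only a further finite term to the kernel. Once this compatibility is in hand, the finiteness of $\ker(r_w)$ follows by combining Greenberg's argument over $L_u$ with the inflation--restriction bound above, completing the proof.
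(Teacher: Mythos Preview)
Your overall strategy matches the paper's: trivialize $\epsilon$ by passing to a finite extension $L_u/K_w$, use the good ordinary control theorem there, and show that restriction introduces only finite errors. However, there is a genuine gap in the way you bound the kernel of the left vertical arrow in your square.

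You claim that the kernel of
\[
\frac{\H^1(K_w,A_\epsilon)}{\im(\kappa_w)} \longrightarrow \left(\frac{\H^1(L_u,A_\epsilon)}{\im(\kappa_u)}\right)^{\Gal(L_u/K_w)}
\]
is controlled by the inflation--restriction term $\H^1(\Gal(L_u/K_w),A_\epsilon^{G_{L_u}})$ together with ``the kernel of restriction on the local condition.'' But the kernel of this map on quotients is $\res^{-1}(\im(\kappa_u))/\im(\kappa_w)$, which is finite only if, in addition to the inflation--restriction kernel being finite, the map on local conditions $\im(\kappa_w)\to\im(\kappa_u)^G$ has \emph{finite cokernel}. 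Your appeal to \cref{tword} does not establish this: that section shows compatibility of the twisted ordinary and Bloch--Kato conditions at the level of $V_\epsilon$, where restriction is an isomorphism of vector spaces, but you still need to descend this to the $A_\epsilon$-level. The paper does this by a corank argument: since both $p_*(\im(\H^1(K_w,\Fil^+(V_\epsilon))))$ and $p_*(\im(\H^1(L_u,\Fil^+(V_\epsilon))))^G$ arise as propagations of $\bQ_p$-subspaces of the same dimension (the restriction on $\H^1(-,\Fil^+(V_\epsilon))$ being an isomorphism), they have the same $\bZ_p$-corank; combined with the finiteness of the kernel, this forces the cokernel to be finite. Without this step, your diagram chase does not close. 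A minor point: the finiteness of $E(L_u)[p^\infty]$ is true but not because of good reduction; the paper cites \cite[Proposition~1.3.4(iii)]{KY24} for this.
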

	\begin{proof}
		In our notation, for $w=v,\ol v$, the map $r_w$ reads as\[
		r_w\colon\frac{\H^1(K_w,A_\epsilon)}{\H^1_\BK(K_w,A_\epsilon)}\to\frac{\H^1(K_w,M_\epsilon)}{\H^1_\BK(K_w,M_\epsilon)}.
		\]As in~\cref{tword}, the Bloch--Kato local condition for $A$ agrees with the twisted ordinary local condition coming from a Heegner pair $(\tilde{f},\chi)$. Similarly, the Bloch--Kato local condition for $M_E$ can be identified with a twisted ordinary local condition. Therefore the map $r_w$ is identified with{\small\[
			\frac{\H^1(K_w,A_\epsilon)}{p_*(\im(H^1(K_w,\Fil^+(V_\epsilon))\to\H^1(K_w,V_\epsilon)))}\to\frac{\H^1(K_w,M_\epsilon)}{\im(H^1(K_w,\Fil^+(M_\epsilon)))\to\H^1(K_w,M_\epsilon))}.
			\]}
		where $p_*$ denotes propagation from $\H^1(K_w,V_\epsilon)$ to $\H^1(K_w,A_\epsilon)$. From now on, we simply write $\im(\H^1(-,\Fil^+(-)))$ for images and ignore the codomain when it is clear from the context.
		
		As before, let $L_u/K_w$ be an extension that trivializes $\epsilon$. We first assume $\tilde{f}$ is of good reduction. Then as in the good ordinary case (see~\cite[Lemma 3.4]{Greenberg1999}), for $G\defeq\Gal(L_u/K_v)$, the `restricted' map \[r_u\colon \frac{\H^1(L_u,A_\epsilon)^G}{p_*(\im(H^1(L_u,\Fil^+(V_\epsilon))))^G}\to\frac{\H^1(L_u,M_\epsilon)^G}{\im(H^1(L_u,\Fil^+(M_\epsilon))))^G}\]
		has finite kernel. 
		
		From snake lemma applied to the maps defining $\ker(r_w)$ and $\ker(r_u)$, to get finiteness of $\ker(r_w)$, it suffices to show finiteness of the map\[
		r\colon\frac{\H^1(K_w,A_\epsilon)}{p_*(\im(H^1(K_w,\Fil^+(V_\epsilon))))}\to\frac{\H^1(L_u,A_\epsilon)^G}{p_*(\im(H^1(L_u,\Fil^+(V_\epsilon))))^G}.
		\]
		
		Again from snake lemma applied to the commutative diagram\begin{center}
			\begin{tikzcd}
				\ker(t)\ar[r]\ar[d]&\ker(s)\ar[r]\ar[d]&\ker(r)\ar[d]
				\\	p_*(\im(H^1(K_w,\Fil^+(V_\epsilon))))\ar[r]\ar[d,"t"]& \H^1(K_w,A_\epsilon) \ar[r]\ar[d,"s"]& \frac{\H^1(K_w,A_\epsilon)}{p_*(\im(H^1(K_w,\Fil^+(V_\epsilon))))}\ar[d,"r"] \\
				p_*(\im(H^1(L_u,\Fil^+(V_\epsilon))))^G \ar[r]\ar[d]& \H^1(L_u,A_\epsilon)^G  \ar[r]&\frac{\H^1(L_u,A_\epsilon)^G}{p_*(\im(H^1(L_u,\Fil^+(V_\epsilon))))^G}\\
				\coker(t)&&,
		\end{tikzcd}\end{center}
		it suffices to show both $\ker(s)$ and $\coker(t)$ are finite. Now from the inflation-restriction exact sequence, $\ker(s)=\H^1(\Gal(L_u/K_v),A_\epsilon^{G_{L_u}})$, which is finite since $\Gal(L_u/K_v)$ and $A_\epsilon^{G_{L_u}}=E(L_u)[p^\infty]$ are both finite (the latter by~\cite[Proposition 1.3.4\,(iii), weight $2$ case]{KY24}. One can replace their $K_w$ by $L_u$).
		
		It remains to study $\coker(t)$. First note that $\ker(t)$ is finite because it injects into $\ker(s)$. We will show that $p_*(\im(H^1(K_w,\Fil^+(V_\epsilon))))$ and $p_*(\im(H^1(L_u,\Fil^+(V_\epsilon))))^G$ have the same $\bZ_p$
		-corank, from which the finiteness of $\coker(t)$ follows.
		
		Since the restriction map $\H^1(K_w,\Fil^+(V_\epsilon))\to\H^1(L_u,\Fil^+(V_\epsilon))$ is an isomorphism of vector spaces (the composition of restriction and corestriction is a scalar multiplication. See for example~\cite[Corollary 1.5.7]{NSW2.3}), they have the same $\bQ_p$-dimensions. We claim that propagation takes $\bQ_p$-dimensions to $\bZ_p$-coranks, i.e., if a subspace of $\H^1(K_w,V_\epsilon)$ has $\bQ_p$-dimension $d$, then its propagation into $\H^1(K_w,V_\epsilon)$ has $\bZ_p$-corank $d$. 
		
		Let $V'$ be a $d$-dimensional subspace of $\H^1(K_w,V_\epsilon)$, and let $p_*(V')$ be its propagation to $\H^1(K_w,A_\epsilon)$. Then there is an exact sequence\[
		0\to \ker(p_*) \to V'\to p_*(V') \to 0
		\]
		where $\ker(p_*)$ is a subspace of $\H^1(K_w,T_\epsilon)$, and is hence a finitely generated $\bZ_p$-module. Denoting the $\bZ_p$-rank, the $\bQ_p$-dimension and the $\bZ_p$-corank of the three terms in the above exact sequence by $r,d,c$ respectively, one sees that there must be an identity $r=d=c$.
		
		The same is true for subspaces of $\H^1(L_u,V_\epsilon)^G$ since propagation commutes with invariants. Therefore, from the isomorphism in the previous paragraph, both $p_*(\im(H^1(K_w,\Fil^+(V_\epsilon))))$ and $p_*(\im(H^1(L_u,\Fil^+(V_\epsilon))))^G$ have $\bZ_p$-corank $d$. Thus $\coker(t)$ is finite.
		
		Note that all we use is that over the extension $L_u$, the map $r_u$ has finite kernel, and that the restriction maps only introduce some finite errors. Thus the same arguments work when $\tilde{f}$ has multiplicative reduction, by the discussion of the multiplicative reduction in~\cite[section 3]{Greenberg1999}, except that the appeal to~\cite[1.3.4\,(ii)]{KY24} is now modified as in~\cref{Specialize} to fit the multiplicative case.
	\end{proof}
	
	\subsection{The $p$-converse theorems for potentially good ordinary reduction}
	Combining everything above, we arrive at the main theorem in this section.
	
	\begin{theorem}\label{pCmain}
		Let $E$ be an elliptic curve defined over $\bQ$ and let $p>2$ be a prime of potentially good ordinary reduction for $E$. Assume that $E[p]$ is reducible. Let $r\in\{0,1\}$. Then \[
		\corank_{\bZ_p}\Sel_{p^\infty}(E/\bQ)=r\Rightarrow \ord_{s=1}L(E,s)=r,
		\]and so $\rk_\bZ E(\bQ)=r$ and $\Sha(E/\bQ)<\infty$.
	\end{theorem}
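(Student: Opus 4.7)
The plan is to follow the same blueprint as the proof of the $p$-converse in the good ordinary case presented in~\cref{pCgo}, but using the Iwasawa Main Conjecture in~\cref{IMCpo}, the control theorem in~\cref{controlpo}, and the analysis of Heegner pairs $(\tilde f,\chi_\epsilon)$ from~\cref{selfdual} to handle the bad reduction at $p$. Given an elliptic curve $E/\Q$ with potentially good ordinary reduction at the Eisenstein prime $p$ and with $\corank_{\Z_p}\Sel_{p^\infty}(E/\Q)=r$, I would first choose an auxiliary imaginary quadratic field $K$ satisfying~\cref{assumption}, with the additional constraints that (a) $K$ is not the CM field of $E$ when $E$ has CM, (b) the primes dividing $\cond(\epsilon)$ are split in $K$ (so the Heegner pair hypotheses of~\cref{twists} are met), and (c) $\ord_{s=1}L(E^K/\Q,s)=1-r$ for the quadratic twist $E^K$. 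Existence of such $K$ follows from standard non-vanishing results (Bump--Friedberg--Hoffstein and Murty--Murty). Note in particular that $E^K$ has the same reduction type and associated Heegner pair, up to an extra twist by the quadratic character attached to $K$.

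Next, Kolyvagin's theorem~\cref{Kolythm} (which is insensitive to reduction type) applied to $E^K$ yields $\corank_{\Z_p}\Sel_{p^\infty}(E^K/\Q)=1-r$. Combining with the hypothesis on $E$ and the decomposition $\Sel_{p^\infty}(E/K)\sim\Sel_{p^\infty}(E/\Q)\oplus\Sel_{p^\infty}(E^K/\Q)$ (valid up to finite kernel/cokernel since $p>2$), one deduces $\corank_{\Z_p}\Sel_{p^\infty}(E/K)=1$. Since the twisted ordinary Selmer group and the Bloch--Kato Selmer group agree by the comparison at the end of~\cref{tword}, this translates into $\corank_{\Z_p}\H^1_{\cF_{\Lambda,\epsilon}}(K,A_\epsilon)^\Gamma=1$ after applying~\cref{controlpo}, i.e., the $\Lambda$-dual $\cX$ satisfies $\rk_{\Z_p}\cX_\Gamma=1$.

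Now I invoke the structure theorem part of~\cref{Hgstrpo}: $\cX\sim\Lambda\oplus M\oplus M$ for some torsion $\Lambda$-module $M$, which forces $\rk_{\Z_p}\cX_\Gamma=1+2\corank_{\Z_p}M_\Gamma$. Hence $M_\Gamma$ is finite, equivalently $T\nmid f_\Lambda(M)$. By the one-side divisibility of the Heegner Point Main Conjecture (which is in fact a full equality, by~\cref{IMCpo}), this gives $T\nmid f_\Lambda(\H^1_{\cF_{\Lambda,\epsilon}}(K,\bT_\epsilon)/\Lambda\kappa_\infty)$, so the Heegner class $\kappa_\infty$ is non-torsion. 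Specializing via the short exact sequence $0\to\bT_\epsilon\xrightarrow{T}\bT_\epsilon\to T_\epsilon\to 0$ and using the comparison of $\kappa_\infty$ with $\cores_{K_1/K}z^{[\tilde f,0]}_{\mathrm{\acute{e}t},1}$ from the discussion preceding~\cref{IMCpo}, we get that a non-zero multiple of the classical Heegner point in $S_p(E/K)$ is non-torsion. Gross--Zagier then implies $\ord_{s=1}L(E/K,s)=1$, and the factorization $L(E/K,s)=L(E/\Q,s)L(E^K/\Q,s)$ combined with $\ord_{s=1}L(E^K/\Q,s)=1-r$ yields $\ord_{s=1}L(E/\Q,s)=r$ as required. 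The last clause $\rk_\Z E(\Q)=r$ and $\#\Sha(E/\Q)<\infty$ then follows from Gross--Zagier--Kolyvagin.

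The main obstacles I expect are twofold. First, verifying the existence of the auxiliary field $K$ with all required properties in the potentially good ordinary case, especially the condition that the conductor of the twisting character $\epsilon$ has its prime divisors split in $K$; this requires checking that the Bump--Friedberg--Hoffstein-type non-vanishing can be arranged compatibly with finitely many splitting conditions, which is routine but must be spelled out. Second, the comparison between the $\Lambda$-adic Heegner class $\kappa_\infty$ arising from the Kolyvagin system for the Heegner pair $(\tilde f,\chi_\epsilon)$ and a classical Heegner point on $E$ itself (so that Gross--Zagier can be applied) is the most delicate point: one needs the image of $\kappa_\infty$ under the natural descent map to land, up to a $p$-adic unit, in the $\Z_p$-line generated by the trace to $K$ of the Heegner point on $X_0(N)$ attached to $E$, which follows from the explicit reciprocity law of~\cref{JLZmain} and the unit comparison between $z^{[\tilde f_\alpha,0]}_{\mathrm{\acute{e}t}}$ and $z^{[\tilde f,0]}_{\mathrm{\acute{e}t}}$ recorded after the explicit reciprocity law.
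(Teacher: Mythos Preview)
Your proposal is correct and follows essentially the same approach as the paper: choose an auxiliary $K$ satisfying~\cref{assumption} together with the extra splitting condition on primes dividing $\cond(\epsilon)$, then rerun the argument of~\cref{pCgo} with~\cref{controlgo},~\cref{Hgstr},~\cref{IMC} replaced by~\cref{controlpo},~\cref{Hgstrpo},~\cref{IMCpo}. The two concerns you flag (existence of $K$ and the comparison of $\kappa_\infty$ with the classical Heegner point) are exactly the points the paper handles, the first via the observation that the extra splitting requirement imposes only finitely many congruence conditions on $D_K$, and the second via the unit comparison recorded just before~\cref{IMCpo}.
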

	\begin{proof}
		Let $(\tilde{f},
		\chi_\epsilon)$ be the Heegner pair associated to $E$. As in the good ordinary case, the proof relies on the choice of an imaginary quadratic field $K$. The only difference is that in addition to the hypotheses $K$ should verify, we further require that the prime divisors of $\cond(\epsilon)$ are split in $K$. As in the proof of~\cite[Theorem~5.2.1]{CGLS}, this only impose a finite number of congruence conditions on $D_K$, so the existence of $K$ is still guaranteed. By replacing the appeal to~\cref{controlgo} (\textit{resp.~\cref{Hgstr},~\cref{IMC}}) to~\cref{controlpo} (\textit{resp.~\cref{Hgstrpo},~\cref{IMCpo}}), the rest of the proof is exactly the same as that in~\cref{pCgo}.
	\end{proof}
	
	\subsection{Applications to Goldfeld's Conjecture}
	As in~\cite[remark 4.1.2]{KY24}, our $p$-converse theorem has the following applications to Goldfeld's Conjecture in quadratic twists families with a $3$ isogeny where $3$ is a prime of potentially good ordinary reduction.
	
	\begin{corollary}\label{Goldfeld}
		We can obtain better proportions of quadratic twists of (algebraic and analytic) rank~$1$ in~\cite[Theorem~2.5]{BKLOS} for a fixed elliptic curve, in particular a lower bound of $\frac{5}{12}=41.66\ldots\,\%$ in the most advantageous cases (for example, when it's the curve having Cremona label $19a3$). For the twists $E_d$ of the curve $E=19a3$, earlier results in~\cite{CGLS} and~\cite{KY24} only apply when $d\equiv 1,2 \pmod{3}$, covering at least $\frac{3}{8}+\frac{3}{8}=\frac{3}{4}$ of the proportion of the $3$-Selmer rank $1$ twists. Our result also cover the twists with $d\equiv 0 \pmod{3}$ ($\frac{2}{8}$ of all) that correspond to additive reduction (necessarily potentially good ordinary because the original curve is). Consequently, in this family, all curves with $3$-Selmer ranks $0$ or $1$ have algebraic and analytic rank equal to the $3$-Selmer ranks, so at least $\frac{5}{12}$ (rank $1$) $+$ $\frac{1}{4}$ (rank $0$) $=\frac{2}{3}$ twists satisfy the BSD rank conjecture. This provides strong evidence for the Goldfeld conjecture.
	\end{corollary}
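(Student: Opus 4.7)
The plan is to combine the $p$-converse theorem \cref{pCmain} with the distribution results of Bhargava--Klagsbrun--Lemke Oliver--Shnidman (\cite[Theorem~2.5]{BKLOS}), which give explicit lower bounds on the densities of quadratic twists of a fixed elliptic curve admitting a rational $3$-isogeny with prescribed $3$-Selmer rank. For the curve $E = 19a3$, the most advantageous input case, their result yields a lower density $\geq \frac{5}{12}$ of squarefree $d$ for which the twist $E_d$ has $3$-Selmer rank exactly one, and $\geq \frac{1}{4}$ for which the $3$-Selmer rank is zero. The strategy is to convert these Selmer-rank statistics to analytic/algebraic rank statistics via a $p$-converse theorem, treating every reduction type at $3$ that actually occurs in the family.

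The first step is to classify the reduction of $E_d$ at $3$ as a function of $d \bmod 3$. Since $E = 19a3$ has good ordinary reduction at $3$, the twist $E_d$ has good ordinary reduction at $3$ whenever $3 \nmid d$, i.e., $d \equiv 1, 2 \pmod 3$; these cases are already handled by the $p$-converse theorems of \cite{CGLS} and \cite{KY24}. When $d \equiv 0 \pmod 3$ (and $d$ is squarefree, so $3 \| d$), $E_d$ acquires additive reduction at $3$, but recovers good ordinary reduction over $\Q_3(\sqrt d)$, so $E_d$ has potentially good ordinary reduction at $3$; this is precisely the range now covered by \cref{pCmain}. Moreover, the rational $3$-isogeny on $E$ descends to every twist $E_d$, so $E_d[3]$ is reducible as a $G_\Q$-module for every $d$, and the reducibility hypothesis in \cref{pCmain} is satisfied uniformly.

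The second step is a density accounting. Among squarefree integers the local density at $3$ gives proportions $\frac{3}{8}$, $\frac{3}{8}$, $\frac{1}{4}$ for the residues $d \equiv 1, 2, 0 \pmod 3$ respectively, so the previously available $p$-converse theorems cover a proportion $\frac{3}{4}$ of all squarefree $d$, while \cref{pCmain} fills in the remaining $\frac{1}{4}$. Applying the appropriate $p$-converse theorem in each congruence class, every twist $E_d$ with $3$-Selmer rank $r \in \{0,1\}$ satisfies $\ord_{s=1} L(E_d, s) = r$, and hence by Gross--Zagier--Kolyvagin $\rk_\Z E_d(\Q) = r$ and $\#\Sha(E_d/\Q) < \infty$. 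Combining the rank $0$ and rank $1$ densities gives a lower bound $\frac{5}{12} + \frac{1}{4} = \frac{2}{3}$ for the proportion of twists of $19a3$ satisfying the BSD rank conjecture.

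The main technical point is to make sure that the densities from \cite{BKLOS} are not lost when restricted to the congruence class $d \equiv 0 \pmod 3$: their lower bounds on $3$-Selmer ranks are derived by counting Selmer elements through $3$-isogeny descent and are known to be insensitive to any finite set of congruence conditions on $d$, so the rank-$r$ density contribution from the class $d \equiv 0 \pmod 3$ is at least $\frac{1}{4}$ times the overall rank-$r$ density. Beyond this accounting, the only thing to check is the stability of the rational $3$-isogeny (hence of the Eisenstein hypothesis) under twist, which is immediate, and that the residual characters $\phi, \psi$ twisted by the quadratic character $\chi_d$ still verify the running hypotheses behind \cref{pCmain}, which follows because twisting by $\chi_d$ does not affect the nontriviality or local behaviour required in \cref{potordIwasa}.
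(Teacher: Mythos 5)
Your proposal correctly identifies the three ingredients: the density bounds from~\cite[Theorem~2.5]{BKLOS}, the classification of the reduction type of $E_d$ at $3$ according to $d \bmod 3$, and the uniform applicability of a $p$-converse theorem across all residue classes now that \cref{pCmain} covers the additive (potentially good ordinary) case $3 \mid d$. The paper states this corollary without proof, but your reasoning matches its sketch: $E = 19a3$ has good ordinary reduction at $3$, the rational $3$-isogeny (and so the Eisenstein condition on $E_d[3]$) persists under quadratic twist, and the density $\tfrac{1}{4}$ of squarefree $d$ with $3 \mid d$ is precisely the gap filled by \cref{pCmain}; the residue class computation $\tfrac{3}{8}$, $\tfrac{3}{8}$, $\tfrac{1}{4}$ is also correct.

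One remark: the equidistribution discussion in your last paragraph is not needed for the final $\tfrac{2}{3}$ bound. Once \cref{pCmain} is available, the relevant $p$-converse theorem applies to \emph{every} squarefree $d$ (irrespective of $d \bmod 3$), so one simply combines the total densities from~\cite{BKLOS} ($\geq \tfrac{5}{12}$ for $3$-Selmer rank~$1$, $\geq \tfrac{1}{4}$ for rank~$0$) with the uniform implication ``$3$-Selmer rank $r\in\{0,1\}$ $\Rightarrow$ analytic and algebraic rank $r$''; no information about how the Selmer rank is distributed among residue classes is required. That equidistribution claim would only matter if one wished to quantify precisely how much of the $\tfrac{5}{12}$ was already reachable via~\cite{CGLS} and~\cite{KY24}, which is an aside in the statement rather than part of the final conclusion.
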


	\bibliographystyle{amsalpha}
	\bibliography{references}

\providecommand{\bysame}{\leavevmode\hbox to3em{\hrulefill}\thinspace}
\providecommand{\MR}{\relax\ifhmode\unskip\space\fi MR }
\providecommand{\MRhref}[2]{%
  \href{http://www.ams.org/mathscinet-getitem?mr=#1}{#2}
}
\providecommand{\href}[2]{#2}
\begin{thebibliography}{BKLOS19}

\bibitem[AI19]{AI19}
Fabrizio Andreatta and Adrian Iovita, \emph{Katz type p-adic l-functions for
  primes p non-split in the cm field}, arXiv: Number Theory (2019).

\bibitem[BC09]{BrinonConrad}
Olivier Brinon and Brian Conrad, \emph{{CMI Summer School Notes on $p$-adic
  Hodge theroy}}, 2009,
  \url{https://claymath.org/sites/default/files/brinon_conrad.pdf}.

\bibitem[BCK21]{BCK21}
Ashay Burungale, Francesc Castella, and Chan-Ho Kim, \emph{{A proof of
  Perrin-Riou's Heegner point main conjecture}}, Algebra Number Theory
  \textbf{15} (2021), no.~7, 1627--1653.

\bibitem[BDP13]{BDP13}
Massimo Bertolini, Henri Darmon, and Kartik Prasanna, \emph{{Generalized
  {H}eegner cycles and {$p$}-adic {R}ankin {$L$}-series}}, Duke Math. J.
  \textbf{162} (2013), no.~6, 1033--1148, With an appendix by Brian Conrad.

\bibitem[BKLOS19]{BKLOS}
Manjul Bhargava, Zev Klagsbrun, Robert~J. Lemke~Oliver, and Ari Shnidman,
  \emph{{{\(3\)}}-isogeny {Selmer} groups and ranks of abelian varieties in
  quadratic twist families over a number field}, Duke Math. J. \textbf{168}
  (2019), no.~15, 2951--2989.

\bibitem[Cas15]{Cas15}
Francesc Castella, \emph{On the exceptional specializations of big heegner
  points}, Journal of the Institute of Mathematics of Jussieu \textbf{-1}
  (2015).

\bibitem[Cas17]{Cas17}
\bysame, \emph{{{$p$}-adic heights of Heegner points and Beilinson-Flach
  classes}}, Journal of the London Mathematical Society \textbf{96} (2017),
  156--180.

\bibitem[CGLS22]{CGLS}
Francesc Castella, Giada Grossi, Jaehoon Lee, and Christopher Skinner,
  \emph{{On the anticyclotomic Iwasawa theory of rational elliptic curves at
  Eisenstein primes}}, Inventiones mathematicae \textbf{227} (2022), 517--580.

\bibitem[CGS23]{CGS}
Francesc Castella, Giada Grossi, and Christopher Skinner, \emph{{Mazur's main
  conjecture at Eisenstein primes}}, 2023,
  \url{https://doi.org/10.48550/arXiv.2303.04373}.

\bibitem[CH18]{CastellaHsieh}
Francesc Castella and Ming-Lun Hsieh, \emph{{Heegner cycles and {$p$}-adic
  {$L$}-functions}}, Math. Ann. \textbf{370} (2018), no.~1--2, 567--628.

\bibitem[Con]{Conrad}
Brian Conrad, \emph{{Semistable reduction for Abelian varieties}},
  \url{https://math.stanford.edu/~conrad/DarmonCM/2011Notes/SemistableReduction.pdf}.

\bibitem[Cor02]{Cor02}
Christophe Cornut, \emph{{Mazur’s conjecture on higher Heegner points}},
  Inventiones mathematicae \textbf{148} (2002), 495--523.

\bibitem[CW78]{CW78}
J.~Coates and A.~Wiles, \emph{{On $p$-adic $L$-functions and elliptic units}},
  Journal of the Australian Mathematical Society \textbf{26} (1978), no.~1,
  1–25.

\bibitem[dS87]{deShalit}
Ehud de~Shalit, \emph{Iwasawa theory of elliptic curves with complex
  multiplication : p-adic l functions}, 1987.

\bibitem[Gre99]{Greenberg1999}
Ralph Greenberg, \emph{Iwasawa theory for elliptic curves}, pp.~51--144,
  Springer Berlin Heidelberg, Berlin, Heidelberg, 1999.

\bibitem[Hid10]{Hida2010}
Haruzo Hida, \emph{The {I}wasawa {$\mu$}-invariant of {$p$}-adic {H}ecke
  {$L$}-functions}, Ann. of Math. (2) \textbf{172} (2010), no.~1, 41--137.

\bibitem[How04]{How2004}
Benjamin Howard, \emph{{The Heegner point Kolyvagin system}}, Compositio
  Mathematica \textbf{140} (2004), no.~1, 1439--1472.

\bibitem[JLZ21]{JLZ21}
Dimitar Jetchev, David Loeffler, and Sarah~Livia Zerbes, \emph{Heegner points
  in {C}oleman families}, Proceedings of the London Mathematical Society
  \textbf{122} (2021), no.~1, 124--152.

\bibitem[JSW17]{JSW2017}
Dimitar Jetchev, Christopher Skinner, and Xin Wan, \emph{{The Birch and
  Swinnerton-Dyer Formula for Elliptic Curves of Analytic Rank One}}, Cambridge
  Journal of Mathmatics \textbf{5} (2017), no.~3, 369--434.

\bibitem[Kat04]{Kato}
Kazuya Kato, \emph{{$p$-adic Hodge theory and values of zeta functions of
  modular forms}}, Ast\'{e}risque \textbf{295} (2004), 117--290.

\bibitem[Kri16]{Kri16}
Daniel Kriz, \emph{{Generalized Heegner cycles at Eisenstein primes and the
  Katz $p$-adic $L$-function}}, Algebra and Number Theory \textbf{10} (2016),
  no.~2, 309--374.

\bibitem[KS23]{KellerStoll2023}
Timo Keller and Michael Stoll, \emph{{Complete verification of strong BSD for
  many modular abelian surfaces over $\mathbf{Q}$}}, 2023,
  \url{https://arxiv.org/abs/2312.07307}.

\bibitem[KY24]{KY24}
Timo Keller and Mulun Yin, \emph{{On the anticyclotomic Iwasawa theory of
  modular forms of semistable reduction at Eisenstein primes}},
  \url{https://arxiv.org/abs/2402.12781}.

\bibitem[LV16]{LV}
Matteo Longo and Stefano Vigni, \emph{{Kolyvagin systems and Iwasawa theory of
  generalized Heegner cycles}}, Kyoto Journal of Mathematics (2016).

\bibitem[MR04]{MR04}
Barry Mazur and Karl Rubin, \emph{Kolyvagin systems}, American Mathematics
  Society (2004).

\bibitem[Mü24]{Müller24}
Katharina Müller, \emph{Kato’s main conjecture for potentially ordinary
  primes}, Glasgow Mathematical Journal (2024), 1–21.

\bibitem[Nek06]{NekovarSelmerComplexes}
Jan Nekov\'{a}\v{r}, \emph{Selmer complexes}, Ast\'{e}risque (2006), no.~310,
  viii+559.

\bibitem[NSW08]{NSW2.3}
Jürgen Neukirch, Alexander Schmidt, and Kay Wingberg, \emph{{Cohomology of
  Number Fields}}, second edition ed., Grundlehren der mathematischen
  Wissenschaften, Vol.~\textbf{323}, Springer-Verlag, 2008, version 2.3 from
  \url{https://www.mathi.uni-heidelberg.de/~schmidt/NSW2e/index-de.html}.

\bibitem[PR87]{PR87}
Bernadette Perrin-Riou, \emph{{Fonctions $L$ $p$-adiques, théorie d'Iwasawa et
  points de Heegner}}, Bulletin de la Société Mathématique de France
  \textbf{115} (1987), 399--456 (fre).

\bibitem[Rub91]{Rubin1991}
Karl Rubin, \emph{The ``main conjectures'' of {I}wasawa theory for imaginary
  quadratic fields}, Invent. Math. \textbf{103} (1991), no.~1, 25--68.

\bibitem[Shn21]{ShnidmanIMRN}
Ari Shnidman, \emph{Quadratic twists of abelian varieties with real
  multiplication}, Int. Math. Res. Not. \textbf{2021} (2021), no.~5,
  3267--3298.

\bibitem[Ski14]{Skinner}
Christopher Skinner, \emph{{Multiplicative reduction and the cyclotomic main
  conjecture for $\GL_2$}}, Pacific Journal of Mathematics \textbf{283} (2014),
  171--200.

\bibitem[SU14]{SU14}
Christopher Skinner and Eric Urban, \emph{The iwasawa main conjectures for
  gl2}, Inventiones mathematicae \textbf{195} (2014), 1--277.

\bibitem[Vat03]{Vat03}
V.~Vatsal, \emph{{Special values of anticyclotomic $L$-functions}}, Duke
  Mathematical Journal \textbf{116} (2003), no.~2, 219 -- 261.

\end{thebibliography}
	
\end{document}